\newcommand{\map}[3]{#1\colon #2\to #3}
\newtheorem{theorem}{Theorem}[section]
\newtheorem{lemma}[theorem]{Lemma}
\newtheorem{prop}[theorem]{Proposition}
\newtheorem{cor}[theorem]{Corollary}
\theoremstyle{definition}
\newtheorem{defi}[theorem]{Definition}
\theoremstyle{remark}
\newtheorem{rem}[theorem]{Remark}
\newtheorem{example}[theorem]{Example}
\newcommand{\mcg}{\mathrm{Mod}}
\newcommand{\pants}{\mathcal{P}}
\newcommand{\Circ}{\mathcal{C}}
\newcommand{\Tri}{\mathcal{T}}
\newcommand{\Aut}{\mathrm{Aut}}
\newcommand{\curv}{\mathrm{Curv}}
\newcommand{\farey}{\mathcal{F}}
\newcommand{\St}{\mathrm{St}}
\newcommand{\Lk}{\mathrm{Lk}}
\newcommand{\Z}{\mathbb{Z}}
\newlength{\customwidth}
\newcommand{\mathdash}{\relbar\mkern-4mu\relbar}
\newcommand{\mathdashR}{\relbar\mkern-7mu\rightarrow}
\newcommand{\mathdashL}{\leftarrow\mkern-7mu\relbar}
\newcommand{\plainMove}[2]{\,\ooalign{$#2$\cr
    \hidewidth\raisebox{1.15ex}{\text{\tiny {\ #1}}} 
    \hidewidth\cr}\,}
\newcommand{\mvi}{\plainMove{\,$i$}{\mathdash}} 
\newcommand{\mvI}{\plainMove{\;1}{\mathdash}}  
\newcommand{\mvII}{\plainMove{\,2}{\mathdash}}  
\newcommand{\mvIII}{\plainMove{\,3}{\mathdash}}  
\newcommand{\mvIV}{\plainMove{4}{\mathdashR}}  
\newcommand{\mvIVL}{\plainMove{4}{\mathdashL}}  
\newcommand{\mv}{\plainMove{\ }{\mathdash}} 
\newcommand{\fullMv}[4]{#1\!\colon #2 #3 #4}
\newcommand{\multi}[1]{\underline{#1}}
\DeclareMathOperator{\im}{im}
\numberwithin{equation}{section}
\title[Automorphisms of the pants graph]{Automorphisms of the pants graph of a nonorientable surface}
\author{Micha{\l} Stukow \and B{\l}a\.zej Szepietowski}
\address{Institute of Mathematics, Faculty of Mathematics, Physics and Informatics, University of Gda\'nsk, 80-308 Gda\'nsk, Poland} 
\keywords{Mapping class group, nonorientable surface, pants decomposition, curve complex}
\email{michal.stukow@ug.edu.pl}
\email{blazej.szepietowski@ug.edu.pl}
\begin{document}
\begin{abstract}
We prove that, except in certain low-complexity cases, the automorphism group of the graph of pants decompositions of a nonorientable surface is isomorphic to the mapping class group of that surface.
\end{abstract}

\maketitle
\section{Introduction}
Let $S$ be a compact surface,  possibly nonorientable and with
boundary. We define the mapping class group $\mcg(S)$ of $S$ to be the group of isotopy classes
of all homeomorphisms of $S$. If S is orientable then $\mcg(S)$ is usually called the extended
mapping class group and we denote by $\mcg^+(S)$ the subgroup of $\mcg(S)$ consisting of orientation preserving mapping classes. 

A connected orientable (resp. nonorientable) surface of genus $g$ with $b$ boundary components will be denoted by $S_{g,b}$ (resp. $N_{g,b}$). If $b=0$ we drop it from the notation. Note that
$N_{g,b}$ is obtained from $S_{0,g+b}$ by gluing $g$ M\"obius bands (also called crosscaps) along $g$ distinct boundary components of $S_{0,g+b}$.

One of the most important breakthroughs in the study of mapping class groups $\mcg^+(S_{g,b})$ of oriented surfaces was the introduction of the so-called \emph{cut-system complex} by Hatcher and Thurston in \cite{HatcherThurston}. They proved that this 2-dimensional complex is connected and simply-connected. Moreover, the action of $\mcg^+(S_{g,b})$ on cut-system complex is cocompact, so the full finite presentation of $\mcg^+(S_g)$ can be derived from the presentations of the stabilizers of vertices and edges. This program was initiated by Hatcher and Thurston in \cite{HatcherThurston}, completed by Harer \cite{Harer}, and later greatly simplified by Wajnryb \cite{WajnrybPres,WajnrybSimp} -- see \cite{WajnrybSimp} for more details. 
\subsection{The pants complex}
In Appendix of \cite{HatcherThurston} Hatcher and Thurston introduced another important concept -- the graph $\pants^1(S_{g,b})$ of maximal cut systems of an oriented surface.  The vertices of this graph are pants decompositions of $S_{g,b}$ and edges correspond to two types of elementary moves (see Section \ref{sec:pants:graph}). Later, Hatcher, Lochak and Schneps \cite{HatcherPants,HatcherLochak} proved that $\pants^1(S_{g,b})$ is connected and added five types of 2-dimensional cells to turn the graph $\pants^1(S_{g,b})$ into 2-dimensional complex $\pants(S_{g,b})$ and they proved that $\pants(S_{g,b})$ is simply-connected. This complex is now called the \emph{pants complex} of $S_{g,b}$.   

There are a number of interesting results concerning $\pants(S_{g,b})$. Brock \cite{Brock} proved that $\pants^1(S_{g,b})$ is quasi-isometric to Teichm\"uller space with the Weil-Petersson metric. Margalit \cite{Mar} proved the following characterization of $\Aut(\pants(S_{g,b}))$.
\begin{theorem}\label{Th:Margalit}
The natural map $\map{\theta}{\mcg(S_{g,b})}{\Aut(\pants(S_{g,b}))}$
is surjective. Moreover, $\ker\theta\cong\Z_2$ for $S\in \{S_{1,1}, S_{1,2}, S_{2}\}$, $\ker\theta\cong\Z_2\oplus \Z_2$ for $S = S_{0,4}$, $\ker\theta = \mcg(S)$ for $S=S_{0,3}$, and $\ker\theta$ is trivial otherwise.
\end{theorem}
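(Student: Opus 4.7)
The natural approach, modeled on Ivanov's proof of curve-complex rigidity, is to reduce the theorem to the corresponding statement for $\Circ(S_{g,b})$. Given an automorphism $\phi$ of $\pants(S_{g,b})$, the plan is to produce an induced automorphism $\phi_\ast$ of the curve complex $\Circ(S_{g,b})$ and then apply Ivanov's theorem, $\Aut(\Circ(S))\cong\mcg(S)$, to obtain a mapping class $f$ with $\phi_\ast=f_\ast$. Once such an $f$ is in hand, showing $\theta(f)=\phi$ is essentially automatic, since a pants decomposition is determined by its set of component curves and the action of $\phi$ on vertices is recovered from its action on curves.

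The crucial step is the construction of $\phi_\ast$. Fix a vertex $v=\{c_1,\ldots,c_n\}$ of $\pants(S_{g,b})$. Every edge incident to $v$ is an elementary move modifying a unique curve $c_i\in v$, so the edges at $v$ are naturally partitioned into $n$ colour classes indexed by the curves of $v$. The goal is to characterize this partition purely in terms of the 1- and 2-skeleton of $\pants$: two edges at $v$ should lie in the same class if and only if they are related by a specific pattern of 2-cells, distinguishing the commuting squares arising from moves on disjoint curves from the cells that involve the same curve in sequence. Once this intrinsic characterization is in place, $\phi$ induces a bijection $\phi_v$ between the curves of $v$ and those of $\phi(v)$. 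Coherence on adjacent vertices, which share all but one curve, allows these local bijections to be assembled into a global map $\phi_\ast$ on simple closed curves, and disjointness is preserved because any two disjoint curves extend to a common pants decomposition. Ivanov's theorem then supplies the desired $f$.

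The exceptional cases $S_{0,3},S_{0,4},S_{1,1},S_{1,2},S_2$ must be treated separately: $\pants(S_{0,3})$ is a single point, so the kernel is all of $\mcg$; the hyperelliptic involution on $S_{1,1},S_{1,2},S_2$ fixes every isotopy class of simple closed curves and so acts trivially on $\pants$; and on $S_{0,4}$ the Klein four-group of hyperelliptic involutions does the same. Ruling out any further kernel elements reduces, in each case, to the analogous low-complexity analysis for $\Circ(S)$. I expect the main obstacle to be the combinatorial step above: producing a 2-cell-based characterization of the curve-label of an edge that is valid uniformly across both types of elementary moves and across all local configurations of pants adjacent to the modified curve. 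Low-complexity degenerations, where fewer 2-cells are available and certain moves collapse, will likely require separate case analysis before the uniform argument can be applied.
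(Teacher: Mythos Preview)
This theorem is cited in the paper as Margalit's result \cite{Mar}; the paper does not prove it. The paper does, however, prove the nonorientable analogue (Theorem~\ref{main:aut_pants}) by adapting Margalit's strategy, so a comparison against that approach is meaningful.

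Your overall plan---construct an induced automorphism of the curve complex and invoke Theorem~\ref{AutCs}---is exactly Margalit's approach and the one this paper follows. The mechanism you propose for defining the induced map, however, differs from what is actually used. You want to partition the edges at a vertex $v$ into colour classes via ``patterns of 2-cells'' and then glue local bijections. Margalit (and this paper, in Sections~\ref{sec:Farey} and~\ref{sec:aut:pants}) instead attaches to each edge $XY$ the subgraph $F(XY)$ spanned by all pants decompositions containing $X\cap Y$; for orientable $S$ this is always a copy of the Farey graph, and such Farey subgraphs are recognisable purely combinatorially inside $\pants(S)$. For a marked pair $(F,X)$ one then reads off the unique curve $\alpha\in X$ with $F=F(X,\alpha)$, and the induced map is $\Phi(A)(\alpha)=v[A(F(X,\alpha)),A(X)]$. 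The small circuits (squares, pentagons, hexagons---the 2-cells of the pants complex) enter only in the \emph{well-definedness} step, showing that $\Phi(A)(\alpha)$ is independent of the chosen extension of $\alpha$ to a pants decomposition; they are not used to define the map.

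Your edge-partition idea could in principle be made to work, but the Farey-subgraph formulation is cleaner and avoids exactly the obstacle you flagged: one never has to produce a uniform 2-cell criterion distinguishing same-curve from different-curve moves across all local configurations, because the Farey subgraph is a global, intrinsically defined object attached to each edge.
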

Later, this result was extended by Aramayona \cite{Ara} to injective simplicial and locally-injective simplicial maps of $\pants(S)$.

The proof of Theorem \ref{Th:Margalit} is based on a similar result for another simplicial complex associated to a surface, namely the \emph{curve complex} $\curv(S)$. The vertices of $\curv(S)$ correspond to homotopy classes of nontrivial simple closed curves in $S$, and $k$-simplexes correspond to sets of $k+1$ vertices that may be represented as a multicurve in $S$ -- see Section \ref{sec:preli} for a proper definition. One of the fundamental properties of $\curv(S)$ is the following
\begin{theorem}[Ivanov \cite{IvanovCs}, Korkmaz \cite{KorkmazCs}, Luo \cite{Luo}] \label{AutCs}
If $S\neq S_{0,3}$ is an orientable surface with $\chi(N)<0$ and 
$\map{\eta}{\mcg(S)}{\Aut(\curv(S))}$ is the natural map, then
\begin{enumerate}
    \item $\eta$ is surjective when $S\neq S_{1,2}$ and $\im \theta$ is a proper subgroup of $\Aut(\curv(S))$ that preserves the set of nonseparating curves for $S=S_{1,2}$.
    \item $\ker\eta\cong\Z_2\oplus\Z_2$ for $S=S_{0,4}$, $\ker\eta\cong\Z_2$ for $S\in\{S_{1,1}, S_{1,2},S_{2}$\}, and $\ker\eta$ is trivial otherwise.
\end{enumerate}
\end{theorem}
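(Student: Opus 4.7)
The plan is to treat injectivity (i.e.\ identifying $\ker\eta$) and surjectivity separately, following the classical Ivanov-style strategy. For the kernel, one checks directly that an element $f\in\mcg(S)$ with $\eta(f)=\mathrm{id}$ must fix the isotopy class of every simple closed curve. Using the change-of-coordinates principle, such an $f$ commutes with every Dehn twist and must lie in the center of $\mcg(S)$. The center is then computed in the listed low-complexity cases (the hyperelliptic involution of $S_{1,1}$, $S_{1,2}$ and $S_{2}$; the two commuting hyperelliptic involutions of $S_{0,4}$) and shown to be trivial otherwise using Dehn--Nielsen--Baer or a direct action on homology.

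For surjectivity the plan is to produce, given $\phi\in\Aut(\curv(S))$, a homeomorphism of $S$ realising it. The first key step is to show that $\phi$ preserves natural topological types of curves. Nonseparating curves are characterised combinatorially (e.g.\ their complement in $\curv(S)$ has a distinguished link structure not shared by separating curves), and separating curves are further distinguished by the genus/puncture data of their complementary pieces, recoverable from the combinatorics of their stars. Next, pants decompositions correspond to top-dimensional simplices of $\curv(S)$, so $\phi$ permutes them. Fix a pants decomposition $P$; by the change-of-coordinates principle there is $h\in\mcg(S)$ with $h(P)=\phi(P)$, and after replacing $\phi$ by $\eta(h)^{-1}\circ\phi$ we may assume $\phi$ fixes $P$ vertexwise.

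Now the problem localises: each pants curve $c\in P$ has a link in $\curv(S)$ controlled by the adjacent pairs of pants, and the curves transverse to a single $c\in P$ (while staying disjoint from the other curves of $P$) form a Farey graph associated to a one-holed torus or four-holed sphere piece of $S\setminus(P\smallsetminus\{c\})$. Since $\phi$ is a simplicial automorphism, it restricts to an automorphism of each such Farey graph; by the known rigidity of the Farey graph (realised by the mapping class group of the corresponding subsurface) these restrictions are induced by homeomorphisms. One glues these local homeomorphisms along $P$, using that they agree on the pants curves they fix, to produce a global $f\in\mcg(S)$. A final verification that $\eta(f)$ and $\phi$ agree on a spanning family of curves (those disjoint from all but one element of $P$) gives $\eta(f)=\phi$.

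The main obstacles lie in the exceptional cases. For $S_{1,2}$ the curve complex has extra automorphisms that exchange the unique separating curve type with nonseparating types — these cannot be realised by a homeomorphism, which is why the image is only the index-two subgroup preserving nonseparating curves; handling this carefully requires the topological-type preservation step to be augmented with a combinatorial distinction specific to $S_{1,2}$. In the remaining sporadic cases $S_{0,4}$, $S_{1,1}$, $S_{2}$ one must deal with small or disconnected curve complexes (Farey graph, countably many vertices with no edges, etc.) where the above gluing argument degenerates and one instead identifies $\Aut(\curv(S))$ by hand. These low-complexity computations, together with the $S_{1,2}$ exception, are the genuine technical heart; the higher-complexity argument is then a fairly systematic implementation of the Ivanov program.
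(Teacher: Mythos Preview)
The paper does not contain a proof of this theorem: it is stated as a background result and attributed via citation to Ivanov, Korkmaz, and Luo. There is therefore nothing in the paper to compare your proposal against.

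That said, your outline is a reasonable high-level summary of the strategy in those references. The kernel computation via the center of $\mcg(S)$ is correct. The surjectivity sketch, however, glosses over the genuinely hard part: the step ``glue these local homeomorphisms along $P$'' is where most of the work in the original proofs lies, and it does not follow simply from the local pieces agreeing on the pants curves --- one must control how the local maps act on boundary components and marked points of the subsurfaces, and then verify the resulting global map induces $\phi$ on \emph{all} vertices of $\curv(S)$, not merely on curves meeting at most one element of $P$. Also, your characterisation of the $S_{1,2}$ exception is slightly off: the extra automorphism of $\curv(S_{1,2})$ arises because $\curv(S_{1,2})\cong\curv(S_{0,5})$, and it is the automorphism coming from a homeomorphism $S_{1,2}\to S_{0,5}$ (not an internal swap of curve types on $S_{1,2}$) that fails to lie in the image of $\eta$.
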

In contrast to the orientable case, much less is known about the pants complex of a nonorientable surface. 
The most significant result to date is due to Papadopoulos and Penner \cite{PP}, who introduced two new elementary moves between pants decompositions of a nonorientable surface (see Section \ref{sec:pants:graph}) and proved that the resulting pants graph $\pants(N_{g,b})$ is connected. 
\subsection{The main theorems}
The main goal of this paper is to extend Theorem \ref{Th:Margalit} to the case of a nonorientable surface (Theorem \ref{main:aut_pants}). However, as a first step we need the following  nonorientable version of Theorem \ref{AutCs}. 
\begin{theorem}\label{main:aut_curv}
If $N$ is a nonorientable surface with $\chi(N)<0$ and 
$\eta\colon\mcg(N)\to\Aut(\curv(N))$ is the natural map, then
\begin{enumerate}
    \item $\eta$ is surjective;
    \item $\ker\eta\cong\Z_2\oplus\Z_2$ for $N=N_{1,2}$, $\ker\eta\cong\Z_2$ for $N\in\{N_{2,1}, N_3$\}, and $\ker\eta$ is trivial otherwise.
\end{enumerate}
\end{theorem}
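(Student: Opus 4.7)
The plan is to adapt the Ivanov--Korkmaz--Luo proof of Theorem~\ref{AutCs} to the nonorientable setting. Fix $\phi\in\Aut(\curv(N))$; the main objective is to construct $f\in\mcg(N)$ with $\eta(f)=\phi$, and then to describe $\ker\eta$ by identifying the mapping classes that fix every isotopy class of simple closed curve.

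The first and most delicate step is a combinatorial characterization of the topological type of a vertex of $\curv(N)$ that must be preserved by $\phi$. On a nonorientable surface there are two new phenomena relative to the orientable case: one-sided curves (with M\"obius band neighborhoods), and two-sided curves whose complementary surface is nonorientable. I would show that $\phi$ preserves (i)~the one-sided/two-sided partition of vertices, and (ii)~among two-sided vertices, the homeomorphism type of the complement. For~(i), the key observation is that cutting along a one-sided curve $\alpha$ in $N_{g,b}$ yields $N_{g-1,b+1}$, whereas cutting along a two-sided nonseparating curve produces a surface with two new boundary components and of different topological type; the two kinds of vertex are then distinguished by the dimension and fine combinatorial structure of their links in $\curv(N)$. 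Claim~(ii) follows by an inductive argument on complexity using~(i).

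Once type preservation is established, the realization step proceeds in the standard way: fix a top-dimensional simplex $\Delta$ of $\curv(N)$, apply $\phi$ to obtain a simplex $\phi(\Delta)$ of the same combinatorial type, and invoke the change-of-coordinates principle for $\mcg(N)$ to produce a homeomorphism $f$ realizing $\phi$ on $\Delta$. One then shows $\eta(f)=\phi$ on all of $\curv(N)$ by propagating the equality along curves meeting $\Delta$ in controlled ways, which upgrades agreement on a top simplex to agreement on the full curve complex. Surjectivity follows with no exceptional failure at $N_{1,2}$: the extra automorphism that obstructs surjectivity for $\curv(S_{1,2})$ does not appear here because the presence of one-sided curves in $N_{1,2}$ breaks the symmetry between separating and nonseparating two-sided curves that creates it.

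To compute $\ker\eta$, I would explicitly exhibit nontrivial mapping classes fixing every isotopy class of curve in the three exceptional surfaces $N_{1,2}$, $N_{2,1}$, and $N_3$---for example, involutions descending from the orientation double covers $S_{0,4}$, $S_{1,2}$, $S_2$, whose orientable kernels are recorded in Theorem~\ref{AutCs}, together with any additional involutions intrinsic to the nonorientable structure---and then show via an Alexander-method argument that these generate the whole of $\ker\eta$. For all other $N$ with $\chi(N)<0$, the Alexander method applied to a suitable rigid family of curves forces $\ker\eta$ to be trivial. The principal obstacle I anticipate is step~(i): the purely combinatorial detection of one-sided curves has no analogue in the orientable theory, and the correct invariant must be isolated and verified with care before the rest of the argument can be carried out uniformly.
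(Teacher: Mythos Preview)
Your outline is a reasonable sketch of the Atalan--Korkmaz strategy, but it is not what the paper does. The paper does not prove Theorem~\ref{main:aut_curv} from scratch: immediately after the statement it records that the result is already known for $N_{g,b}$ with $g+b\ne 4$ by \cite{AK}, and for $N_{1,3}$ by \cite{Szep}. The only new content is the three surfaces $N_{2,2}$, $N_{3,1}$, $N_4$, handled in Section~\ref{sec:aut:curves} (Theorem~\ref{thm:aut_curv}). That proof is an induction on genus: for an essential one-sided curve $\alpha$ one uses the inductive hypothesis on $N_\alpha\cong N_{g-1,5-g}$ to realize $A$ on $\St(\alpha)$ by a mapping class $f_\alpha$, then shows all the $f_\alpha$ coincide via connectivity of $\curv^1_e(N)$ (Lemma~\ref{AK:X}), and finally extends to the remaining curve types by ad hoc link arguments. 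In particular, the ``principal obstacle'' you flag---combinatorial detection of topological type---is not addressed at all in the paper; it is imported as a black box from \cite{AK} (Proposition~\ref{AK:top_type}).

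So your proposal is not wrong as a global strategy, but it misreads the paper's division of labor: you are proposing to redo \cite{AK} rather than to supply the missing low-complexity cases. If you want to match the paper, you should instead (i) cite the existing results, and (ii) for $g+b=4$ with $g\ge 2$, run the star-by-star induction above, paying attention to the boundary-permutation issue when extending $f'_\alpha$ from $N_\alpha$ to $N$ (this is where the paper does real work, using specific curve configurations to pin down which boundary component of $N_\alpha$ came from $\alpha$). Your kernel discussion via double covers is also off-target: the paper treats the kernel entirely through the cited references and the isomorphism statement of Theorem~\ref{thm:aut_curv}, not by exhibiting involutions.
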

The above theorem was proved by Atalan and Korkmaz \cite{AK} for nonorientable surfaces $N_{g,b}$ with $g+b\ne 4$, the case $N=N_{1,3}$ was settled in \cite{Szep}, and the proof for the remaining surfaces $N\in\{N_{2,2}, N_{3,1}, N_{4,0}\}$ is given in the present article (Theorem \ref{thm:aut_curv}). 

The following theorem is the main result of this paper.
\begin{theorem}\label{main:aut_pants} 
If $N$ is a nonorientable surface with $\chi(N)<0$ and 
$\theta\colon\mcg(N)\to\Aut(\pants(N))$ is the natural map, then
\begin{enumerate}
    \item $\theta$ is surjective;
    \item $\ker\theta=\ker\eta$, where  $\eta\colon\mcg(N)\to\Aut(\curv(N))$ is the map from Theorem \ref{main:aut_curv}.
\end{enumerate}
\end{theorem}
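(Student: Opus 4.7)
I would follow Margalit's strategy for Theorem \ref{Th:Margalit}: translate the problem on $\pants(N)$ to a problem on $\curv(N)$ and then invoke Theorem \ref{main:aut_curv}. Concretely, construct a homomorphism $\map{\Phi}{\Aut(\pants(N))}{\Aut(\curv(N))}$ satisfying $\Phi\circ\theta=\eta$; from this identity the two conclusions of the theorem follow quickly.

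One inclusion of kernels is elementary: if $f\in\ker\eta$ fixes every isotopy class of simple closed curve, then $f$ fixes every pants decomposition of $N$, hence every vertex of $\pants(N)$, so $\theta(f)=\mathrm{id}$. The same observation yields injectivity of $\Phi$: if $\Phi(\phi)=\mathrm{id}_{\curv(N)}$ then $\phi$ fixes every vertex of $\pants(N)$ and so $\phi=\mathrm{id}$. Combining $\Phi\circ\theta=\eta$ with these two facts gives $\ker\theta=\ker\eta$ at once. Surjectivity of $\theta$ then follows from Theorem \ref{main:aut_curv}(1): given $\phi\in\Aut(\pants(N))$, pick $f\in\mcg(N)$ with $\eta(f)=\Phi(\phi)$; injectivity of $\Phi$ forces $\theta(f)=\phi$.

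The substance of the proof is the construction of $\Phi$. For each curve $\alpha$ let $\pants_\alpha$ be the induced subgraph of $\pants(N)$ spanned by pants decompositions containing $\alpha$. The goal is a graph-theoretic characterization of the family $\{\pants_\alpha\}_\alpha$, so that every $\phi\in\Aut(\pants(N))$ permutes it; the induced permutation on curves is then $\Phi(\phi)$. As in Margalit's orientable argument, the recognition can be carried out locally at each vertex $P$: each curve $\alpha\in P$ is singled out by the set of edges at $P$ corresponding to elementary moves that alter $\alpha$, and different curves of $P$ are separated by the small cycles (pentagon-type and square-type relations) through such edges. Disjointness of $\alpha,\beta$ is equivalent to $\pants_\alpha\cap\pants_\beta\ne\emptyset$, so the induced bijection $\Phi(\phi)$ of $\curv(N)$ is simplicial.

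\emph{Main obstacle.} The technical heart is the local recognition in the nonorientable pants graph, since the two Papadopoulos--Penner moves introduce edge-types absent in the orientable setting. One must show that at each vertex $P$ the classical $S_{1,1}$- and $S_{0,4}$-type flips can be distinguished from the new moves supported on $N_{1,2}$- and $N_{2,1}$-subsurfaces using only the graph structure of $\pants(N)$, and that within each of these classes the edges are grouped by the curve of $P$ they modify. This requires a careful enumeration of the elementary cycles at $P$, and low-complexity surfaces such as $N_{1,3}$, $N_{2,2}$, $N_{3,1}$ and $N_4$ will probably require separate treatment, paralleling the exceptional cases of Theorem \ref{main:aut_curv}.
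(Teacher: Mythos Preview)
Your proposal is correct and follows essentially the same route as the paper: construct $\Phi\colon\Aut(\pants(N))\to\Aut(\curv(N))$ with $\Phi\circ\theta=\eta$, show $\Phi$ is injective, and then deduce both assertions from Theorem~\ref{main:aut_curv}. The paper implements your ``local recognition'' by classifying, for each edge $XY$, the subgraph $F(XY)$ spanned by all pants decompositions containing $X\cap Y$ (it is a Farey graph, an infinite fan, or a single edge according to the move type), and it resolves the consistency of $\Phi$ across different vertices via a careful analysis of short ``tame'' circuits (quadrangles, certain pentagons and hexagons, and a new standard heptagon supported on $N_{2,2}$); these are exactly the ``small cycles'' and special low-complexity cases you anticipate.
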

As an immediate corollary, we get
\begin{cor}
    If $N=N_{g,b}$ is a nonorientable surface with $\chi(N)<0$, then
\begin{enumerate}
    \item $\Aut(\pants(N))\cong\Aut(\curv(N))$,
    \item $\Aut(\pants(N))\cong\Aut(\curv(N))\cong\mcg(N)$, for $g+b\ge 4$.
\end{enumerate}
\end{cor}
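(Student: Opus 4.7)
The corollary follows almost immediately by combining Theorems \ref{main:aut_curv} and \ref{main:aut_pants}, so the proof I would write is very short and essentially bookkeeping.

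For part (1), the plan is to invoke the first isomorphism theorem twice in parallel. Theorem \ref{main:aut_pants} gives that $\theta\colon\mcg(N)\to\Aut(\pants(N))$ is a surjection with kernel $\ker\theta$, and Theorem \ref{main:aut_curv} gives that $\eta\colon\mcg(N)\to\Aut(\curv(N))$ is a surjection with kernel $\ker\eta$. Since Theorem \ref{main:aut_pants}(2) asserts $\ker\theta=\ker\eta$, both quotients $\mcg(N)/\ker\theta$ and $\mcg(N)/\ker\eta$ coincide, so
\[
\Aut(\pants(N))\;\cong\;\mcg(N)/\ker\theta\;=\;\mcg(N)/\ker\eta\;\cong\;\Aut(\curv(N)).
\]

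For part (2), the plan is to inspect the list of exceptional surfaces in Theorem \ref{main:aut_curv}(2). The kernel $\ker\eta$ is nontrivial only for $N\in\{N_{1,2},N_{2,1},N_3\}$, and in each of these cases $g+b\le 3$. Therefore, as soon as $g+b\ge 4$ we have $\ker\eta=\ker\theta=1$, so both $\eta$ and $\theta$ are isomorphisms, yielding the chain $\Aut(\pants(N))\cong\mcg(N)\cong\Aut(\curv(N))$.

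There is no real obstacle here; the work is entirely absorbed in Theorems \ref{main:aut_curv} and \ref{main:aut_pants}. The only thing to double-check while writing is that the list of small exceptional surfaces in Theorem \ref{main:aut_curv}(2) is indeed exhausted by cases with $g+b\le 3$, which it is by direct inspection.
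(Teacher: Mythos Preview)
Your proposal is correct and matches the paper's approach: the paper states this result as an immediate corollary of Theorems \ref{main:aut_curv} and \ref{main:aut_pants} without writing out a separate proof, and your argument via the first isomorphism theorem together with the inspection of the exceptional list is exactly the intended (and only natural) way to read off the conclusion.
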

We also prove that if two nonorientable surfaces have isomorphic pants graphs, then they are homeomorphic -- see Theorem \ref{prop:iso:pants}. Finally, we show that it is enough to add 3 types of 2-dimensional cells (two types of triangles and one type of pentagonal cells) to $\pants(N_3)$ to obtain a connected and simply-connected complex $\overline{\pants(N_3)}\supset \pants(N_3)$ -- see Theorem \ref{prop:n30:2dim}.
\subsection{Comparison with the case of an orientable surface.}
Our proof of Theorem \ref{main:aut_pants} has a similar structure as the proof of Theorem \ref{Th:Margalit} in \cite{Mar} however, there are some important differences. 

If $S$ is an oriented surface, then to each edge of $\pants(S)$ there is an associated Farey graph and these graphs are the main tool to control the action of elements of $\Aut(\pants(S))$. If $N$ is nonorientable, then the situation is more complicated, because there are different kinds of subgraphs associated to edges in $\pants(N)$ -- see Proposition \ref{lem:moves_char}. What is more, there are edges in $\pants(N)$ that are not contained in any triangle -- see Example \ref{ex:lonely:edge}.  

Another new phenomena in a nonorientable case is the fact that two pants decompositions of the same surface can have a different number of components (see for example Figure \ref{fig:n4:3pants}). The existence of moves of type 4 (Figure \ref{fig:moves:all}) implies that if $N$ has a pants decomposition with $p\geq 2$ one-sided and $q$ two-sided curves, then $N$ has also a pants decomposition with $p-2$ one-sided and $q+1$ two-sided curves. This significantly complicates the notion of 2-tight paths in $\pants(N)$ (corresponding to the notion of 2-curve small paths in \cite{Mar}) -- see Definition \ref{def:two:tight}.

Finally, in an oriented case, the pants graph $\pants^1(S)$ is embedded in a simply-connected pants complex $\pants(S)$ and we have a short list of types of 2-cells in that complex. It turns out that these 2-dimensional cells play a crucial role in the proof of Theorem \ref{Th:Margalit}. On the other hand, if $N$ is nonorientable, then we only have a connected graph $\pants(N)=\pants^1(N)$ and the full list of types of 2-dimensional cells are yet to be discovered. In fact, in the present paper we discovered three interesting 2-cells: pentagon in $N_3$, tame alternating hexagon in $\pants(N_{1,3})$ and standard heptagon in $\pants(N_{2,2})$ -- see Figures \ref{fig:N30:pent}, \ref{fig:6gon} and \ref{fig7gon}. We expect that this kind of cells will be crucial in constructing the 2-dimensional structure of $\pants(N)$, but we leave this for future considerations.
\subsection{Outline of the paper}
In Section \ref{sec:pants:graph} we review the definition of the pants graph $\pants(N)$ of a nonorientable surface $N=N_{g,b}$. We also identify this graph for surfaces of some small values of $g$ and $b$. In Section \ref{sec:Farey} we identify the subgraphs of $\pants(N)$ corresponding to edges of $\pants(N)$ (Remark \ref{rem:edge:graphs}). Then we classify the triangles in $\pants(N)$ (Proposition \ref{lem:triangles}) and we prove the abstract characterization of moves (Proposition \ref{lem:moves_char}). In Section \ref{sec:Circuits} we study properties of short loops (circuits) in $\pants(N)$ and we introduce the circuit corresponding to 2-cell in $\pants(N_{2,2})$ (standard heptagon). The analysis of short circuits is continued in Section \ref{sec:tame:circ} and we prove that (in most cases) every edge of $\pants(N)$ can be extended to a short circuit with some special properties (Proposition \ref{prop:edge-circuit-new}). This construction is a crucial step towards the proof that there is a well-defined correspondence between $\Aut(\pants(N))$ and $\Aut(\curv(N))$ -- we prove this in Section \ref{sec:aut:pants}, Theorem \ref{prop:Phi:iso}. In the same section, we finish the proof of our main Theorem \ref{main:aut_pants}. In Section \ref{sec:iso:pants} we prove that if two nonorientable surfaces have isomorphic pants graphs, then they are homeomorphic (Theorem \ref{prop:iso:pants}). Section \ref{sec:aut:curves} contains the proof of Theorem \ref{main:aut_curv} if $g+b=4$ (Theorem~\ref{thm:aut_curv}). Finally, in Section \ref{sec:N30} we focus on the case $N=N_3$. We describe the structure of $\pants(N)$ and construct an abstract graph isomorphic to it. Additionally, we  define a simply-connected 2-dimensional complex $\overline{\pants(N)}$, whose
      1-skeleton is $\pants(N)$, and such that $\Aut(\overline{\pants(N)})=\Aut(\pants(N))$ (Theorem \ref{prop:n30:2dim}).

\section{Preliminaries} \label{sec:preli}

Let $S$ be a connected compact surface of negative Euler characteristic. Recall that $\chi(S_{g,b})=2-2g-b$, whereas  $\chi(N_{g,b})=2-g-b$. 
A \emph{non-trivial curve} on $S$ is an embedded simple closed curve which does not bound a disc nor a M\"obius band in $S$ and which is not homotopic to a boundary component of $S$. By a curve we always understand a non-trivial curve. A curve is either two- or one-sided depending on whether its regular neighbourhood is an annulus or a M\"obius band respectively. To simplify the notation, we identify a curve with its isotopy class. We denote by $i(\alpha,\beta)$ the geometric intersection number of two curves $\alpha,\beta$. If $i(\alpha,\beta)=0$ and $\alpha\ne\beta$, then we say that $\alpha$ and $\beta$ are \emph{disjoint}.

A (finite) set of pairwise disjoint curves on $S$ is called a \emph{multicurve}. For a multicurve $X$ we denote by $X^+$ (resp. $X^-$) the set of two-sided (resp. one-sided) curves of $X$, and by $|X|$ the number of curves in $X$.

The \emph{curve complex} $\curv(S)$ is a simplicial complex whose $k$-simplices are multicurves of cardinality $k+1$ on $S$. 
The $k$-skeleton of $\curv(S)$ is denoted by $\curv^k(S)$.

If $S=S_{0,4}$ or $S=S_{1,1}$, the definition of $\curv(S)$ is modified by declaring $\alpha,\beta\in\curv^0(S)$ to be adjacent in $\curv(S)$ whenever $i(\alpha,\beta)=2$ or $i(\alpha,\beta)=1$ respectively. The resulting connected graph is isomorphic to \emph{the Farey graph} -- see Section 3 of \cite{Minsky1996AGA}. For the description of $\curv(N)$ for nonorientable surfaces of small complexity see \cite{Sch}.

For a multicurve $X$ on $S$ we denote by $S_X$ the surface obtained by cutting $S$ along $X$. A component of $S_X$ is \emph{trivial} if it is homeomorphic to $S_{0,3}$. A \emph{pants decomposition} of $S$ is a multicurve $X$ such that all components of $S_X$ are trivial. Note that a pants decomposition is a maximal multicurve. 

Any pants decomposition of an orientable surface $S_{g,b}$ consists of $3g+b-3$ curves. The number of curves in a pants decomposition of a nonorientable surface $N_{g,b}$ varies. For any integer $m$ such that $0\le m\le g$ and $m\equiv g\pmod{2}$ there is a pants decoposition of $N_{g,b}$ of cardinality 
\[\frac{3g+m}{2}+b-3\] containing exactly $m$ one-sided curves -- see Figure \ref{fig:n4:3pants} (the shaded disks represent crosscaps, hence their interiors are to be removed and then the antipodal points on each boundary component are to be identified). 
    \begin{figure}[h]
\begin{center}
\includegraphics[width=0.99\customwidth]{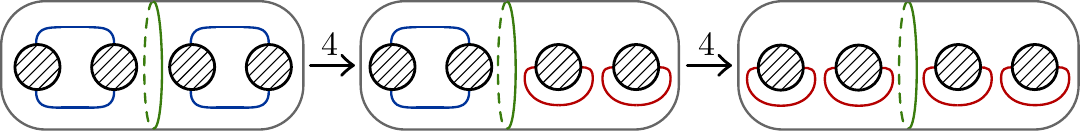} 
\caption{Pants decompositions of $N_4$ with different cardinalities.}\label{fig:n4:3pants} %
\end{center}
\end{figure}
\section{The pants graph}\label{sec:pants:graph}
\subsection{Definition and connectivity.}
Let $N$ be a nonorienatble surface with $\chi(N)<0$. We denote by $\pants^0(N)$ the set of all pants decompositions of $N$.
The \emph{pants graph} of $N$, denoted by $\pants(N)$ is a graph whose vertex set is $\pants^0(N)$ and whose edges correspond to elementary moves between pants decompositions. 
An \emph{elementary move} is an operation of replacing one curve $\alpha$ of a pants decomposition $X$ either by a curve $\beta$, or by a pair of disjoint one-sided curves $\{\beta_1,\beta_2\}$ to obtain a different pants decomposition $Y$. There are four types of elementary move depending on the nontrvial component of
$N_{X\setminus\{\alpha\}}$ called the \emph{support} of the move. The intersection numbers of the involved curves are given in the following table (see also Figure~\ref{fig:moves:all}).

\medskip
\begin{center}
  \begin{tabular}{|c|c|c|c|c|}
\hline
    Type & Support & Notation & Curves & Intersection \\
\hline
  \rule{0pt}{15pt}  $1$ & $S_{1,1}$ & ${\alpha}{\mvI}{\beta}$ & $\alpha\in X^+,\ \beta\in Y^+$  & $i(\alpha,\beta)=1$ \\
\hline
 \rule{0pt}{15pt}  $2$ & $S_{0,4}$ & ${\alpha}{\mvII}{\beta}$ & $\alpha\in X^+,\ \beta\in Y^+$ & $i(\alpha,\beta)=2$ \\
\hline
 \rule{0pt}{15pt}  $3$ & $N_{1,2}$ & ${\alpha}{\mvIII}{\beta}$ & $\alpha\in X^-,\ \beta\in Y^-$ & $i(\alpha,\beta)=1$ \\
\hline
 \rule{0pt}{15pt}  $4$ & $N_{2,1}$ & ${\alpha}{\mvIV}{\{\beta_1,\beta_2\}}$ & $\alpha\in X^+,\ \{\beta_1,\beta_2\}\subseteq Y^-$ & $i(\alpha,\beta_1)=i(\alpha,\beta_2)=1$\\
\hline
\end{tabular}  
\end{center}

\begin{figure}[h]
\begin{center}
\includegraphics[width=0.99\customwidth]{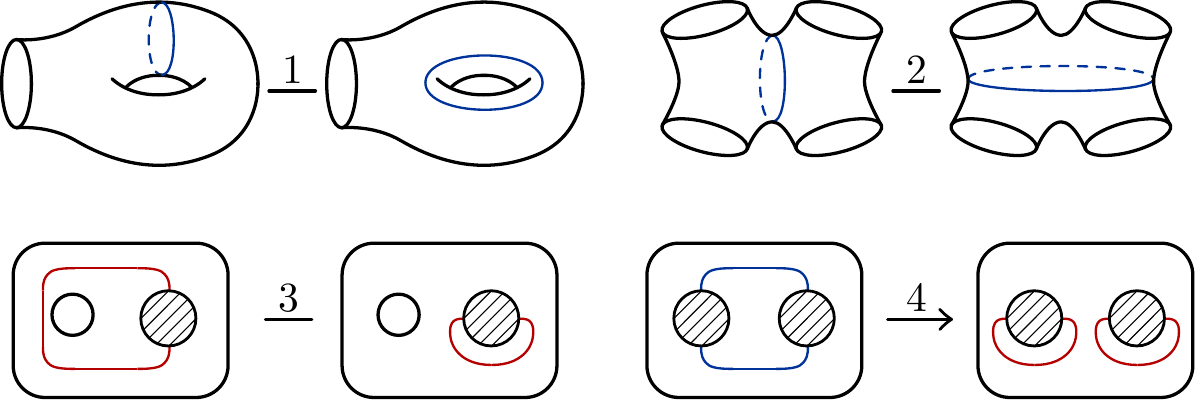}
\caption{The four elementary moves between pants decompositions.}\label{fig:moves:all} %
\end{center}
\end{figure}

\medskip
A move a type 4 has a natural direction towards the pants decomposition of bigger cardinality, which we indicate by an arrow. For the sake of symmetry, the inverse of such a move, replacing a pair of disjoint one-sided curves by a single two-sided curve, will also be considered a move of type 4.

The pants graph of an orientable surface is defined analogously, using only moves of type 1 and 2 coming from the seminal paper 
\cite{HatcherThurston}. Moves of type 3 and 4, supported on nonorientable surfaces and involving one-sided curves, were introduced by Papadopoulos and Penner \cite{PP} who proved the following fundamental result.
\begin{theorem}[\cite{PP}]\label{pants_connected}
For every nonorientable surface $N$ with $\chi(N)<0$ the pants graph $\pants(N)$ is connected.
\end{theorem}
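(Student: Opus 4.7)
The plan is to induct on $-\chi(N)$. The base cases are the surfaces of smallest complexity with $\chi(N)<0$, namely $N_{1,2}$, $N_{2,1}$ and $N_3$; in each case $\pants^0(N)$ is small enough that one can enumerate the pants decompositions directly (a finite list of topological types) and verify by hand that any pair is joined by a sequence of moves of type 1--4.

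For the inductive step, I would prove the following \emph{change of coordinates lemma}: for any pants decomposition $X\in\pants^0(N)$ and any non-trivial simple closed curve $\gamma$ on $N$, there is a sequence of elementary moves transforming $X$ into some $X'\in\pants^0(N)$ with $\gamma\in X'$. Granted this, let $X,Y\in\pants^0(N)$ be arbitrary and pick any $\gamma\in Y$; the lemma produces $X'\ni\gamma$, so that $X'\setminus\{\gamma\}$ and $Y\setminus\{\gamma\}$ restrict to pants decompositions of the cut surface $N_\gamma$ (possibly disconnected), which has strictly larger Euler characteristic. The inductive hypothesis, applied to each component, connects them in $\pants(N_\gamma)$, and every elementary move carried out in $N_\gamma$ extends without change to an elementary move in $\pants(N)$ whose support is disjoint from $\gamma$. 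Composing, $X\sim X'\sim Y$ in $\pants(N)$.

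To prove the change of coordinates lemma, I induct on $I(X,\gamma):=\sum_{\alpha\in X}i(\alpha,\gamma)$. If $I(X,\gamma)=0$, then $\gamma$ is disjoint from every curve of $X$, hence contained in a single component of $N_X$; since each such component is a pair of pants $S_{0,3}$, which carries no non-trivial simple closed curves in its interior, $\gamma$ must be isotopic to a boundary component of that pants, i.e.\ $\gamma\in X$, and there is nothing to do. If $I(X,\gamma)>0$, pick $\alpha\in X$ with $i(\alpha,\gamma)>0$; the support $\Sigma$ of the elementary move based at $\alpha$ is the nontrivial component of $N_{X\setminus\{\alpha\}}$, and is one of $S_{1,1}$, $S_{0,4}$, $N_{1,2}$ or $N_{2,1}$. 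A case analysis on $\Sigma$, examining how the arcs of $\gamma\cap\Sigma$ cross $\alpha$, produces an elementary move of the corresponding type (1, 2, 3, or 4) replacing $\alpha$ by some $\beta$, or by a pair $\{\beta_1,\beta_2\}$, so that the new pants decomposition intersects $\gamma$ strictly fewer times than $X$ does.

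The principal obstacle I anticipate lies in the case $\Sigma=N_{2,1}$, where moves of type 4 change the cardinality of the pants decomposition and the crosscaps allow $\gamma\cap\Sigma$ to sit in more configurations than in the orientable subsurfaces $S_{1,1}$ and $S_{0,4}$; one must argue that an appropriate choice between a two-sided replacement $\beta$ and a pair of one-sided replacements $\{\beta_1,\beta_2\}$ always strictly decreases $I(\cdot,\gamma)$. A related subtlety is that cutting $N$ along a one-sided curve $\gamma$ preserves non-orientability while increasing $-\chi$ by only $1$, so the induction must be driven by $-\chi(N)$ rather than by genus, and the base cases must be chosen accordingly. Once these issues are resolved, the inductive skeleton above delivers connectivity of $\pants(N)$.
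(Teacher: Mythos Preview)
The paper does not give its own proof of this theorem: it is stated with attribution to Papadopoulos--Penner \cite{PP} and no argument is supplied. So there is nothing in the paper to compare your proposal against directly.

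Your inductive strategy (reduce to a surface of higher Euler characteristic by first moving a chosen curve $\gamma$ into the pants decomposition, and prove the latter by intersection-number reduction) is the standard one and is essentially the approach taken in \cite{PP}. Two small corrections are in order. First, your description of the base cases is misleading: $\pants^0(N_{2,1})$ is not finite, and you cannot simply ``enumerate'' it; rather, one writes down its structure explicitly (it is the infinite fan $F_\infty$, see Proposition~\ref{pantsN21}), and similarly for $N_{1,2}$ and $N_3$. Second, when you cut $N$ along $\gamma$ the resulting surface $N_\gamma$ may well be orientable (e.g.\ cutting $N_{2,1}$ along its two-sided curve yields $S_{0,3}$; cutting $N_3$ along $\alpha_0$ yields $S_{1,1}$), so your induction must take the orientable connectivity theorem of Hatcher--Thurston as an additional input, not only the nonorientable inductive hypothesis.

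The genuine content, as you correctly anticipate, is the case analysis in the intersection-reduction lemma. Note that the case $\Sigma\cong N_{1,2}$ is also delicate, not only $\Sigma\cong N_{2,1}$: in $N_{1,2}$ there is a \emph{unique} curve $\alpha'$ to move to, and it is not automatic that $i(\alpha',\gamma)<i(\alpha,\gamma)$; one must either choose $\alpha$ more carefully among the curves of $X$ meeting $\gamma$, or combine a type-3 move with a subsequent move on an adjacent curve. This is handled in \cite{PP}, and once that bookkeeping is done your outline goes through.
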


\subsection{Notation for edges.}
  We denote by $XY$ an  edge of $\pants(N)$ (elementary move) joining vertices $X$ and $Y$. We also write 
    \[X{\mvi}Y\quad\textrm{or}\quad \fullMv{XY}{\alpha}{\mvi}{\beta}\quad\textrm{or}\quad {\alpha}{\mvi}{\beta}\]
    to denote a move of type $i$ replacing $\alpha\in X$ by $\beta\in Y$. The type of the move may be dropped if it is arbitrary or unknown. A move a type 4, changing the number of curves in a pants decomposition, has a natural direction towards the vertex of bigger cardinality, which we indicate by an arrow
    \[X{\mvIV}Y\quad\textrm{or}\quad \fullMv{XY}{\alpha}{\mvIV}\{\beta_1,\beta_2\}\quad\textrm{or}\quad {\alpha}{\mvIV}\{\beta_1,\beta_2\}.\]
    Unless explicitly stated otherwise, letters of the Greek alphabet denote curves. An underlined Greek letter, e.g. $\underline{\alpha}$, denotes either a curve, or a multicurve consisting of two one-sided curves. For example
    \[\fullMv{XY}{\alpha}{\mv}{\underline{\alpha'}}\]
    denotes a move replacing a curve $\alpha$ by $\underline{\alpha'}$, which is either a single curve, or a pair of  one-sided curves. In the latter case $XY$ is a move of type $4$ directed from $X$ to $Y$.

\subsection{Pants graph of $S_{0,4}$, $S_{1,1}$, $N_{1,2}$ and $N_{2,1}$.}
The graphs $\pants(S_{0,4})$ and $\pants(S_{1,1})$ are the same as the curve complexes of these surfaces and are isomorphic to the Farey graph -- see Section 3 of \cite{Minsky1996AGA}.
\begin{prop}[Pants graph of $N_{1,2}$, \cite{PP}]\label{pantsN12}
The graph $\pants(N_{1,2})$ consists of two vertices, 
corresponding to two one-sided curves, joined by an edge of type 3.
\end{prop}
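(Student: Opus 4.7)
The plan is to determine the vertices and edges of $\pants(N_{1,2})$ separately, then assemble the result.

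For the vertices, I would first apply the cardinality formula from Section \ref{sec:preli}: with $g=1$, $b=2$ and the parity constraint $m \equiv g \pmod{2}$, $0 \le m \le g$, the only possibility is $m=1$, giving a pants decomposition of cardinality $\frac{3+1}{2}+2-3=1$. Any one-sided simple closed curve yields such a pants decomposition, since cutting along it produces an orientable surface of Euler characteristic $-1$ with three boundary components, that is, $S_{0,3}$. Hence $\pants^0(N_{1,2})$ is in bijection with the set of isotopy classes of one-sided simple closed curves on $N_{1,2}$.

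The main step is to show that this set contains exactly two elements. I would use the orientation double cover $p\colon S_{0,4}\to N_{1,2}$ with deck involution $\sigma$, a fixed-point-free orientation-reversing homeomorphism. A one-sided curve in $N_{1,2}$ lifts to a single $\sigma$-invariant essential simple closed curve in $S_{0,4}$ on which $\sigma$ acts as rotation by $\pi$. Essential simple closed curves in $S_{0,4}$ partition the four boundary components $b_1^+,b_1^-,b_2^+,b_2^-$ as $2+2$, giving three topological types. The partition $\{b_1^+,b_1^-\}\mid\{b_2^+,b_2^-\}$ is excluded, because it would force $\sigma$ to preserve each of the two complementary pair-of-pants regions and restrict to a free orientation-reversing involution on each; but a pair of pants admits no such involution, as the quotient would have non-integer Euler characteristic $-\tfrac12$. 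The two remaining partition types each contain exactly one $\sigma$-invariant isotopy class, yielding exactly two one-sided curves $\alpha_1,\alpha_2$ on $N_{1,2}$.

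Finally, the lifts $\tilde\alpha_1,\tilde\alpha_2$ have different partition types in $S_{0,4}$, so their minimal geometric intersection number is $2$; since $\sigma$ acts freely and pairs intersection points, we conclude $i(\alpha_1,\alpha_2)=1$ in $N_{1,2}$. This is precisely the intersection condition for a type 3 elementary move with support $N_{1,2}$, so $\alpha_1\,{\mvIII}\,\alpha_2$ is the unique edge of $\pants(N_{1,2})$.

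The hardest part is the middle step: one must carefully verify via the orientation double cover that the ``diagonal'' partition type is excluded (by the Euler characteristic parity obstruction) and that each of the two surviving partition types produces exactly one $\sigma$-invariant isotopy class, hence exactly one one-sided curve on $N_{1,2}$.
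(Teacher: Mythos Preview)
The paper does not give its own proof of this proposition; it is quoted from \cite{PP}, and the underlying classification of curves on $N_{1,2}$ goes back to \cite{Sch}. Your double-cover strategy is a legitimate alternative to that direct classification, and the exclusion of the partition $\{b_1^+,b_1^-\}\mid\{b_2^+,b_2^-\}$ via the Euler-characteristic parity obstruction is correct and elegant.

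There are, however, two genuine gaps. First, the assertion that each of the two surviving partition types contains \emph{exactly one} $\sigma$-invariant isotopy class is the heart of the whole argument, yet you only flag it as ``the hardest part'' without indicating how to prove it. In $S_{0,4}$ every partition type contains infinitely many isotopy classes (related by Dehn twists), so this uniqueness is far from automatic; one clean way to get it is to observe that $\sigma$ acts on $\curv(S_{0,4})$---the Farey graph---as an orientation-reversing simplicial involution, hence as a reflection in the hyperbolic-plane model, fixing exactly two ideal vertices. Second, your intersection computation is not justified as stated: two curves in $S_{0,4}$ of different partition types satisfy $i\ge 2$, not $i=2$ in general; equality holds precisely when the two curves are Farey-adjacent, and that must be verified for this specific pair of $\sigma$-fixed classes before you can divide by $2$. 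Both issues evaporate if you argue inside $N_{1,2}$ directly, as \cite{PP} and \cite{Sch} do: cutting along any one-sided curve yields $S_{0,3}$, so disjoint one-sided curves are isotopic; one then exhibits two explicit one-sided curves meeting once and uses the change-of-coordinates principle (or the finiteness of $\mcg(N_{1,2})$) to conclude.
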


\begin{defi}[Infinite fan]
The \emph{infinite fan} $F_\infty$ is the graph with vertex set $\{v\}\cup\{v_i\}_{i\in\Z}$ and edge set set
$\{vv_i,v_iv_{i+1}\}_{i\in\Z}$. The vertex $v$ is called the \emph{centre} of $F_\infty$.
\end{defi}

\begin{prop}[Pants graph of $N_{2,1}$, \cite{PP}]\label{pantsN21}
The graph $\pants(N_{2,1})$ is isomorphic to $F_\infty$. Every isomorphism $F_\infty\to\pants(N_{2,1})$ maps the centre of $F_\infty$ to the pants decomposition consisting of the unique non-separating two-sided curve in $N_{2,1}$. 
\end{prop}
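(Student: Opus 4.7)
The plan is to classify pants decompositions of $N_{2,1}$, read off the elementary-move neighbourhoods, and then use mapping-class-group transitivity to identify the whole graph with $F_\infty$.

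First, the cardinality formula from Section~\ref{sec:preli} (with $g=2,\ b=1$) forces every pants decomposition of $N_{2,1}$ to have cardinality $1$ (with $m=0$: a single two-sided non-separating curve) or $2$ (with $m=2$: two disjoint one-sided curves). Cutting along a two-sided non-separating curve gives a connected surface of Euler characteristic $-1$ with three boundary components, which must be $S_{0,3}$. A change-of-coordinates argument then shows all such curves form a single isotopy class $\alpha_0$, so $v_0:=\{\alpha_0\}$ is the unique $1$-curve pants decomposition, and it is fixed by every element of $\mcg(N_{2,1})$.

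Next, we analyse the local edge structure. At $v_0$, removing $\alpha_0$ leaves $N_{2,1}$ itself as support, so only moves of type $4$ apply, each producing some 2-curve decomposition. Conversely, reversing a type-4 move from any 2-curve decomposition produces a single two-sided non-separating curve, necessarily $\alpha_0$; hence $v_0$ is adjacent to every 2-curve decomposition. At a 2-curve decomposition $w=\{\beta_1,\beta_2\}$, removing one $\beta_i$ gives support $N_{1,2}$, and Proposition~\ref{pantsN12} provides a unique one-sided replacement $\beta_i'$ via a type-3 move; the reverse type-4 move returns to $v_0$. So each 2-curve decomposition has degree exactly three: one edge to $v_0$ and two type-3 edges to other 2-curve decompositions.

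Let $D$ denote the set of 2-curve decompositions equipped with the induced type-3 subgraph; every vertex of $D$ has degree two. Applying change-of-coordinates, all 2-curve decompositions form a single $\mcg(N_{2,1})$-orbit (they cut $N_{2,1}$ into $S_{0,3}$ in topologically equivalent ways, with $\partial N_{2,1}$ distinguished), so $D$ is vertex-transitive; hence it is either a single finite cycle or a single bi-infinite path. To exclude cycles, we use the Dehn twist $T_{\alpha_0}\in\mcg(N_{2,1})$: applied to any $w_0=\{\beta_1,\beta_2\}\in D$ the iterates $T_{\alpha_0}^k(w_0)$ are pairwise distinct, because $i(T_{\alpha_0}^k\beta_i,\beta_i)$ grows linearly in $|k|$, ruling out both $T_{\alpha_0}^k\beta_i=\beta_i$ and the swap $T_{\alpha_0}^k\beta_1=\beta_2$ for $k\neq 0$. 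Thus $D$ is infinite, and therefore a bi-infinite path.

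Combining the above, $\pants(N_{2,1})=\{v_0\}\cup D$ with $v_0$ adjacent to every vertex of $D$ and $D$ a bi-infinite path---precisely $F_\infty$ with $v_0$ in the role of the centre. The uniqueness clause about isomorphisms $F_\infty\to\pants(N_{2,1})$ sending $v$ to $v_0$ follows because $v_0$ is the only vertex of infinite degree. The main obstacle is justifying the nonorientable Dehn-twist behaviour: one needs $T_{\alpha_0}$ to be of infinite order and the intersection-number growth formula to hold, which works because $T_{\alpha_0}$ is supported in an annular (i.e.\ orientable) neighbourhood of $\alpha_0$ and thus acts as in the orientable case on curves meeting $\alpha_0$.
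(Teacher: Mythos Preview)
The proof has a genuine gap: from ``$D$ is vertex-transitive and $2$-regular'' you conclude ``$D$ is a single finite cycle or a single bi-infinite path'', but vertex-transitivity does not force connectedness. A vertex-transitive $2$-regular graph could equally well be a disjoint union of copies of the same cycle, or a disjoint union of bi-infinite paths. Your subsequent argument, that the $T_{\alpha_0}$-orbit of any $w_0\in D$ is infinite, shows only that $D$ has infinitely many vertices; it does not rule out $D$ being an infinite disjoint union of finite cycles (with $T_{\alpha_0}$ permuting them), nor several copies of $\Z$.

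What is missing is precisely the computation the paper's sketch supplies: for any one-sided curve $\beta$ in $N_{2,1}$, the two one-sided curves disjoint from $\beta$ are $T_{\alpha_0}(\beta)$ and $T_{\alpha_0}^{-1}(\beta)$. Granting this, every $2$-curve pants decomposition has the form $\{T_{\alpha_0}^i(\beta),\,T_{\alpha_0}^{i+1}(\beta)\}$, and the type-$3$ edges link consecutive values of $i$; this simultaneously proves that $D$ is connected and identifies it explicitly as a bi-infinite path indexed by the $T_{\alpha_0}$-orbit. Without this fact (or an equivalent replacement), your change-of-coordinates argument gives transitivity on $D$ but not connectedness, and the identification with $F_\infty$ remains incomplete. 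A smaller point: the claim that every $w\in D$ is adjacent to $v_0$ really requires $i(\alpha_0,\beta)=1$ for every one-sided $\beta$, not merely that some reverse type-$4$ move exists; this is true (no one-sided curve can lie in the orientable complement $N_{\alpha_0}\cong S_{0,3}$, and a minimal-position count then gives a single intersection), but it deserves a sentence.
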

\begin{proof} We only show how to define an isomorphism $F_\infty\to\pants(N_{2,1})$, see \cite{PP} for  details.
    Let $\alpha$ be the unique non-separating two-sided curve in $N=N_{2,1}$ and let $T_\alpha\in\mcg(N)$ be a Dehn twist about $\alpha$. We have $\{\alpha\}\in\pants^0(N)$. Choose a one-sided curve $\beta$ in $N$. Since $N_{\beta}\cong N_{1,2}$, by Proposition \ref{pantsN12} there are exactly two pants decompositions of $N$ containing $\beta$, namely
    $\{\beta,T_\alpha(\beta)\}$ and $\{\beta,T_\alpha^{-1}(\beta)\}$, joined in $\pants(N)$ by an edge of type 3. Each of these vertices is also connected to $\{\alpha\}$ by an edge of type 4. The map $\phi\colon F_\infty\to\pants(N)$ defined by
    \[\phi(v)=\{\alpha\},\quad\phi(v_i)=\{T_\alpha^i(\beta),T_\alpha^{i+1}(\beta)\}, i\in\Z\]
    is an isomorphisms of graphs.
\end{proof} 
\section{Farey and fan subgraphs}\label{sec:Farey}
\begin{defi}
For $X\in\pants(N)$ and $\alpha\in X$ we denote by $F(X,\alpha)$ the subgraph of $\pants(N)$ spanned by all the pants decompositions containing $X\setminus\{\alpha\}$.    
\end{defi}
\begin{rem}\label{rem:FAal:char}
For $X\in\pants(N)$ and $\alpha\in X$ let $M$ be the non-trivial (i.e. $M\not\cong S_{0,3}$) component of $N_{X\setminus\{\alpha\}}$. Then $F(X,\alpha)\cong\pants(M)$ and if $\alpha\in X^+$, then $M\in\{S_{0,4}, S_{1,1}, N_{2,1}\}$, whereas if $\alpha\in X^-$, then $M\cong N_{1,2}$. Therefore, we have the following characterization of the subgraphs $F(X,\alpha)$.
\begin{enumerate}
    \item If $\alpha\in X^+$ and $M\cong S_{0,4}$ or $M\cong S_{1,1}$, then $F(X,\alpha)$ is isomorphic to the Farey graph.
    \item If $\alpha\in X^+$ and $M\cong N_{2,1}$, then $F(X,\alpha)$ is isomorphic to the infinite fan $F_\infty$ by Proposition \ref{pantsN21}. Note that in this case $X$ is the unique vertex of infinite degree in $F(X,\alpha)$ and we call it the \emph{centre} of $F(X,\alpha)$.
    \item If $\alpha\in X^-$, then $F(X,\alpha)$ consist of two vertices joined by an edge of type 3 replacing $\alpha$ by a uniquely determined $\alpha'$ by Proposition \ref{pantsN12}.
\end{enumerate}   
\end{rem}

\begin{defi} Let $X\in\pants(N)$ and $\alpha\in X^+$. We say that $\alpha$ is \emph{special} in $X$ if  the non-trivial component of $N_{X\setminus\{\alpha\}}$ is a one-holed Klein bottle $N_{2,1}$.
\end{defi}
\begin{rem} \label{rem:special:in:fan}
     By Remark \ref{rem:FAal:char},  $F(X,\alpha)\cong F_{\infty}$  if and only if $\alpha$ is special in $X$. Moreover, if $\alpha$ is special in $X$, then $X$ is the centre of $F(X,\alpha)$.
\end{rem}
\begin{rem} \label{rem:special:in:mvIV}
By the previous remark, if 
\[\fullMv{XY}{\alpha}{\mvIV}{\{\beta,\beta'\}}\]
is a move of type 4, then $\alpha$ is special in $X$, $\alpha\not\in Y$ and $XY$ is an edge of $F(X,\alpha)\cong F_{\infty}$. Moreover, $X$ is of infinite degree in $F(X,\alpha)$.
\end{rem}
\begin{defi} \label{defi:XY:graph}
    Let $XY$ be an edge of $\pants(N)$. We denote by $F(XY)$ the subgraph of $\pants(N)$ spanned by all pants decompositions containing $X\cap Y$.    
\end{defi}
%
\begin{rem}\label{rem:edge:graphs}
If $\fullMv{XY}{\alpha}{\mv }{\underline{\alpha'}}$
then $F(XY)=F(X,\alpha)$. 
    By Remarks \ref{rem:FAal:char}--\ref{rem:special:in:mvIV}, $F(XY)$ is isomorphic 
    \begin{enumerate}
    \item to the Farey graph if $XY$ is of type 1 or 2;
    \item to the infinite fan $F_{\infty}$ if $XY$ is of type 4;
    \item to the single edge graph if $XY$ is of type 3.
    \end{enumerate}
\end{rem}
\begin{lemma}\label{lem:special}
If $XY$ is a move of type 1, 2 or 3 and $\alpha$ is special in $X$, then $\alpha\in Y$. 
\end{lemma}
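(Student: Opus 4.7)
My plan is to prove the contrapositive via a direct case analysis of what the move $XY$ can look like. Assume for contradiction that $\alpha$ is special in $X$ but $\alpha\notin Y$. Since $XY$ is of type $1$, $2$ or $3$, the move preserves the cardinality of pants decompositions and replaces exactly one curve of $X$ by one curve of $Y$; in particular $|X\setminus Y|=|Y\setminus X|=1$. The unique curve of $X$ that is replaced must therefore be $\alpha$ itself, so the move can be written as $\fullMv{XY}{\alpha}{\mv}{\alpha'}$ for some $\alpha'$.

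By definition of the type of a move, the support of $XY$ — i.e.\ the non-trivial component of $N_{X\setminus\{\alpha\}}$ — is homeomorphic to $S_{1,1}$, $S_{0,4}$ or $N_{1,2}$ according as $XY$ has type $1$, $2$ or $3$. On the other hand, the hypothesis that $\alpha$ is special in $X$ says exactly that this same non-trivial component is homeomorphic to $N_{2,1}$. Since $N_{2,1}$ is not homeomorphic to any of $S_{1,1}$, $S_{0,4}$, $N_{1,2}$ (they differ either in orientability, genus, or number of boundary components), we reach a contradiction, and hence $\alpha\in Y$.

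There is essentially no obstacle here: the lemma is a bookkeeping statement about the type of the support, and all the ingredients are already recorded in the table of elementary moves and in the definition of ``special''. The only minor point worth stating carefully is why the replaced curve must be $\alpha$ — this uses that moves of type $1$, $2$, $3$ change exactly one curve and preserve cardinality, as opposed to moves of type $4$, which are excluded from the statement precisely because they could swap the special curve $\alpha$ for a pair $\{\beta,\beta'\}$ of one-sided curves (cf. Remark \ref{rem:special:in:mvIV}).
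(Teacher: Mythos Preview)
Your proof is correct and follows essentially the same approach as the paper: assume $\alpha\notin Y$, deduce that the move must replace $\alpha$, and reach a contradiction by examining the support. The only minor difference is in how the contradiction is pinned down: the paper observes that $\alpha$ is the unique two-sided curve in the one-holed Klein bottle $K$, forcing $\beta=\alpha$, whereas you simply compare the homeomorphism type of the support ($N_{2,1}$ versus $S_{1,1}$, $S_{0,4}$, or $N_{1,2}$). Your version is arguably a touch more direct, but both arguments are one-line once the replaced curve is identified.
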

\begin{proof}
Suppose $\alpha\notin Y$. Then $XY$ must replace $\alpha$ by some $\beta\in Y$. By definition $\alpha\in X^+$, hence $\beta\in Y^+$ and $i(\alpha,\beta)>0$. 
Let $K$ be the one-holed Klein bottle component of $N_{X\setminus\{\alpha\}}=N_{Y\setminus\{\beta\}}$.
Since $\alpha$ is the unique two-sided curve in $K$ we have $\beta=\alpha$ which is a contradiction.
\end{proof}
\begin{prop}[Classification of triangles]\label{lem:triangles} Every triangle in $\pants(N)$ is of one of the  two forms. Either
\[\xymatrix@R=0.1pc{
&&\alpha'\ar@{-}[dd]^i\\
(1)&\alpha\ar@{-}[ru]^i\ar@{-}[rd]_i&\\
&&\alpha''
}\]
 where $i\in\{1,2\}$, or 
 \[\xymatrix@R=0.1pc{
&&\{\beta_1,\beta_2\}\ar@{-}[dd]^3\\
(2)&\alpha\ar[ru]^(.35)4\ar[rd]_(.35)4&\\  %
&&\{\beta_1',\beta_2\}
}\]
 A triangle of the form (1) (resp. (2)) will be called a \emph{Farey triangle} (resp. \emph{fan triangle}).
\end{prop}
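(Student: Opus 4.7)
The strategy is a case analysis on the types of the three edges, driven by cardinality of pants decompositions and geometric intersection numbers. Walking around a triangle, the cardinality must return to its starting value: since moves of types $1,2,3$ preserve cardinality while moves of type $4$ change it by $\pm 1$, parity forces the number of type-$4$ edges in a triangle to be $0$ or $2$.

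In the case of no type-$4$ edges, each edge replaces one curve. I would first argue that all three edges change the \emph{same} curve of $X$. If instead $XY$ changed $\alpha\in X$ to $\alpha'\in Y$ and $XZ$ changed a different curve $\gamma\in X$ to $\gamma'\in Z$ (with $\alpha\ne\gamma$), then $\gamma\in Y$ and $\alpha\in Z$, and a small count shows that $YZ$ can be a single-curve replacement only if $\alpha'=\gamma'$, in which case $YZ$ replaces $\gamma$ by $\alpha$. But $\alpha,\gamma\in X$ are disjoint as members of a pants decomposition, so $i(\alpha,\gamma)=0$, contradicting the requirement $i\ge 1$ of any type-$1,2,3$ move. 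Hence the triangle lies in $F(X,\alpha)$ for a single curve $\alpha\in X$; by Remark~\ref{rem:FAal:char}, the only triangles without type-$4$ edges arise when $F(X,\alpha)$ is a Farey graph, yielding a Farey triangle of form~(1) (the $F_\infty$ case would force type-$4$ edges, and the single-edge case has no triangles).

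Now consider two type-$4$ edges; they share a common vertex $V$, and their directions at $V$ must agree, for otherwise the third edge would also have to change cardinality. In the subcase that both edges point \emph{away} from $V$, write $W_i=(V\setminus\{\alpha_i\})\cup\{\beta_{i,1},\beta_{i,2}\}$ with each $\alpha_i$ special in $V$. Demanding $|W_1\setminus W_2|=1$ forces $\{\beta_{1,1},\beta_{1,2}\}=\{\beta_{2,1},\beta_{2,2}\}=\{\beta_1,\beta_2\}$; if also $\alpha_1\ne\alpha_2$, then $\alpha_2$ and $\beta_1$ both lie in the pants decomposition $W_1$ and so are disjoint, while the move $V\to W_2$ demands $i(\alpha_2,\beta_1)=1$, a contradiction. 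Hence $\alpha_1=\alpha_2=\alpha$, the triangle lies in $F(V,\alpha)\cong F_\infty$ by Remark~\ref{rem:special:in:fan}, and by Proposition~\ref{pantsN21} it is a fan triangle of form~(2).

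The main obstacle is the remaining subcase, where both type-$4$ edges point \emph{into} $V$. Writing $W_i=(V\setminus\{\beta_{i,1},\beta_{i,2}\})\cup\{\alpha_i\}$, the constraint $|W_1\setminus W_2|=|W_2\setminus W_1|=1$ forces either (a) $\alpha_1\ne\alpha_2$ with $\{\beta_{1,1},\beta_{1,2}\}=\{\beta_{2,1},\beta_{2,2}\}=\{\beta_1,\beta_2\}$ — in which case both $\alpha_i$ are two-sided nontrivial curves in the $N_{2,1}$-component of $N_{V\setminus\{\beta_1,\beta_2\}}$, forcing $\alpha_1=\alpha_2$ since $N_{2,1}$ has a unique such curve, a contradiction; or (b) $\alpha_1=\alpha_2=\alpha$ with pairs sharing one curve $\beta$, say $\{\beta_{1,1},\beta_{1,2}\}=\{\beta,\beta'\}$ and $\{\beta_{2,1},\beta_{2,2}\}=\{\beta,\beta''\}$. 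Then $\beta'\in V\setminus\{\beta,\beta''\}\subseteq W_2$ is disjoint from $\alpha\in W_2$, so $i(\alpha,\beta')=0$, but the move $W_1\to V$ sends $\alpha$ to $\{\beta,\beta'\}$ and requires $i(\alpha,\beta')=1$, a contradiction. This completes the classification.
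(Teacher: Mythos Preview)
Your argument is correct in substance and follows essentially the same case split as the paper (number of type-$4$ edges, equivalently the cardinality pattern around the triangle). One point needs rephrasing: in the ``both point away from $V$'' subcase, the sentence ``$|W_1\setminus W_2|=1$ forces $\{\beta_{1,1},\beta_{1,2}\}=\{\beta_{2,1},\beta_{2,2}\}$'' is false as stated --- that conclusion only follows under the extra hypothesis $\alpha_1\ne\alpha_2$. When $\alpha_1=\alpha_2$, the constraint instead forces the two pairs to share exactly one curve, which is already the fan-triangle configuration; your contradiction then correctly eliminates the possibility $\alpha_1\ne\alpha_2$, so the conclusion survives once this dichotomy is set up properly. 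A minor stylistic difference from the paper: the paper disposes of the configuration corresponding to your ``both point into $V$'' case by invoking Lemma~\ref{lem:special}, whereas you argue directly from intersection numbers and the uniqueness of the two-sided curve in $N_{2,1}$; both routes are fine.
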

\begin{proof} Let $X,Y,Z\in\pants(N)$ be vertices of a triangle.

 Suppose first that $|X|=|Y|=|Z|=n$ and $XY$ is a move replacing $\alpha\in X$ by $\alpha'\in Y$. We have $i(\alpha,\alpha')>0$ so $\alpha\notin Z$ or 
 $\alpha'\notin Z$. But $|X\cap Z|=|Y\cap Z|=n-1$ which implies $X\cap Z=Y\cap Z=X\setminus\{\alpha\}$. We see that $XYZ$ is of the form (1) but we still need to show that $i\ne 3$. Suppose that $\alpha$ is one-sided. The non-trivial component of $N_{X\setminus\{\alpha\}}$ is homeomorphic to $N_{1,2}$. This component contains $\alpha,\alpha'$ and $\alpha''$, which is a contradiction, because there are only two isotopy classes of nontrivial curves in $N_{1,2}$ -- see Proposition \ref{pantsN12}.  

 Now suppose $n=|X|<|Y|$ and $XY$ is a move of type 4 replacing $\alpha\in X^+$ by $\beta_1,\beta_2\in Y^-$. We have $n\le|Z|\le n+1$ so we consider two cases. First suppose $|Z|=n$ so that $XZ$ is a move of type 1, 2 or 3, and $ZY$ is a move of type 4. Since $\alpha$ is special in $X$, by Lemma~\ref{lem:special}, $\alpha\in Z$. We see that $ZY$ replaces $\alpha$ by $\beta_1,\beta_2$ so $ZY=XY$ and $X=Z$, a contradiction. Therefore, $|Z|=n+1$ and $XZ$ is a move of type 4. Now $|Z\cap Y|=n$ and we can assume that $\beta_2\in Z$. We have $\alpha\notin Z$ because $i(\beta_2,\alpha)=1$ so $XZ$ replaces $\alpha$ by $\beta_1',\beta_2$ for some one-sided $\beta'_1$, $\beta'_1\ne\beta_1$. It follows that $YZ$ is a move of type 3 replacing $\beta_1$ by $\beta'_1$.
\end{proof}
\begin{cor}\label{cor:2:triangles}
Let $XYZ$ be a triangle in $\pants(N)$ and assume that $YZ$ is of type 3 if $XYZ$ is a fan triangle. Then
\begin{enumerate}
    \item $XYZ\subset F(X,\alpha)=F(XY)$ for a unique $\alpha\in X^+$;
    \item if $XYZ'$ is another triangle in $\pants(N)$ and $YZ'$ is of type 3 if $XYZ'$ is a fan triangle, then
    \[XYZ'\subset F(X,\alpha)=F(XY),\]
    where $\alpha$ is as in (1).
\end{enumerate}
\end{cor}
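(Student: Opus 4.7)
The plan is to read both statements off directly from the classification of triangles in Proposition~\ref{lem:triangles}, so the argument is essentially a short case check. First I would identify the distinguished curve $\alpha$. In either type of triangle, exactly one curve of $X$ fails to belong to $Y$, so the candidate for $\alpha$ is forced to be the unique element of $X\setminus Y$. In a Farey triangle the move $XY$ is of type $1$ or $2$, which by definition replaces a two-sided curve, so $\alpha\in X^+$. In a fan triangle, the hypothesis that $YZ$ is of type $3$ pins down the labelling: the type-$3$ edge of a fan triangle joins the two vertices of larger cardinality in diagram~$(2)$, so $X$ must be the unique ``apex'' vertex (the one labelled $\alpha$ there). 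Then $XY$ is of type $4$, whose initial curve is two-sided and in fact special in $X$ by Remark~\ref{rem:special:in:mvIV}, so again $\alpha\in X^+$. In both cases $X\cap Y = X\setminus\{\alpha\}$, and hence $F(XY)=F(X,\alpha)$ by Definition~\ref{defi:XY:graph}.

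Next I would verify $Z\in F(X,\alpha)$, i.e.\ that $X\setminus\{\alpha\}\subseteq Z$. In the Farey case this was established inside the proof of Proposition~\ref{lem:triangles} via the cardinality count $|X\cap Z|=|X|-1$, which forces $X\cap Z = X\setminus\{\alpha\}$. In the fan case, diagram~$(2)$ shows that $XZ$ is the second type-$4$ edge of the triangle, again replacing the same $\alpha$ by a pair of one-sided curves, so $X\setminus\{\alpha\}\subseteq Z$. Combined with the trivial inclusions $X,Y\in F(X,\alpha)$, this proves part~$(1)$; uniqueness of $\alpha$ was already observed, since $\alpha$ is determined as the unique element of $X\setminus Y$.

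For part~$(2)$ I would simply note that this characterisation of $\alpha$ depends only on the edge $XY$ and not on the third vertex, so the same $\alpha$ is produced for any triangle $XYZ'$ satisfying the hypothesis. Running the previous two-case analysis on $XYZ'$ then gives $Z'\in F(X,\alpha)=F(XY)$. I do not expect any real obstacle in this argument; the only point demanding care is the fan case, where the hypothesis ``$YZ$ is of type~$3$'' is precisely what rules out the alternative labellings in which $X$ is one of the two larger vertices of a fan triangle (so that $XY$ or $XZ$ would be the type-$3$ edge and $X\setminus Y$ would not give rise to any two-sided $\alpha$).
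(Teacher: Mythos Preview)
Your proof is correct and follows essentially the same approach as the paper's own proof, which is extremely terse: it simply writes the move $XY\colon\alpha\mv\multi{\alpha}'$, invokes Proposition~\ref{lem:triangles} to conclude $XYZ,\,XYZ'\subset F(X,\alpha)$, and cites Definition~\ref{defi:XY:graph} for $F(X,\alpha)=F(XY)$. Your version unpacks exactly those inferences---identifying $\alpha$ as the unique element of $X\setminus Y$, checking $\alpha\in X^+$ via the hypothesis in the fan case, and verifying $Z\in F(X,\alpha)$ case by case---so it is the same argument written out in full.
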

\begin{proof}
    If $\fullMv{XY}{\alpha}{\mv }{\multi{\alpha}'}$, then Proposition \ref{lem:triangles} implies that
    \begin{align*}
    XYZ&\subset F(X,\alpha)\\
    XYZ'&\subset F(X,\alpha).
    \end{align*}
    Combine this with Definition \ref{defi:XY:graph} to get the statement.
\end{proof}
\begin{lemma}\label{lem:move3}
Suppose that $XY$ is an edge of type $3$ in $\pants(N)$.
\begin{enumerate}  
    \item If $N\ne N_3$, then $XY$ belongs to at most one triangle.
    \item If $N=N_3$, then $XY$ either does not belong to any triangle, or it belongs to exactly two different triangles. It belongs to two triangles if and only if $X$ and $Y$ are vertices of degree $6$ in $\pants(N)$.
\end{enumerate}
\end{lemma}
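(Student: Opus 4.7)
The plan is to parametrize triangles containing the type-3 edge $XY$ by their third vertex $V$ and reduce counting them to a combinatorial question about the pair of pants in $X$'s decomposition that contains $\alpha$. Throughout, write $\alpha \mvIII \alpha'$ for the move $XY$.

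First I would argue that any triangle $XYV$ must be a fan triangle: by Proposition \ref{lem:triangles}, Farey triangles have only type-1 or type-2 edges, so the presence of $XY$ forces form (2), giving $|V| = |X| - 1$ and both $XV, YV$ as type-4 moves replacing the same two-sided $\gamma \in V$ by $\{\alpha, \delta\}$ and $\{\alpha', \delta\}$ respectively, for a common one-sided $\delta \in X \cap Y$. Next, I would exploit Euler characteristics to identify admissible $\delta$'s: the support of the type-4 move is an $N_{2,1}$-component $K$ of $N_{X \setminus \{\alpha, \delta\}}$ containing $\alpha, \delta, \gamma$ in its interior, and since $\chi(N_{2,1}) = -1 = \chi(S_{0,3})$ while $K$ is built from whole pants of $X$'s decomposition, $K$ must consist of a single pants $P$ with two of its boundaries self-glued to form $\alpha$ and $\delta$. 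Conversely, any one-sided $\delta \in X \setminus \{\alpha\}$ arising as a self-gluing of a boundary of the pants $P$ (the one containing $\alpha$) recovers $K \cong N_{2,1}$, and then $\gamma$ is forced to be the unique 2-sided non-separating curve in $K$ by Proposition \ref{pantsN21}, yielding a bona fide triangle. This puts triangles at $XY$ in bijection with those one-sided curves in $X \setminus \{\alpha\}$ that are self-gluings of boundaries of $P$.

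Since $P$ has only two boundaries other than $\alpha$'s, this count is $0$, $1$, or $2$, and equals $2$ exactly when all three boundaries of $P$ are self-glued; in that case $P / {\sim} \cong N_3$ is closed, forcing $N = N_3$ and $X = \{\alpha, \delta, \delta'\}$. This gives (1). For (2), in $N_3$ the intermediate value $1$ does not occur: of the two cardinalities of pants decompositions, $|X| = 2$ ($m = 1$) forces the other two boundaries of $P$ to be joined by a two-sided curve (count $0$), while $|X| = 3$ ($m = 3$) has all three boundaries self-glued (count $2$). The degree characterization then follows from the direct computation that $m=3$ vertices of $\pants(N_3)$ have exactly $3 + 3 = 6$ incident edges ($3$ type-3 plus $3$ type-4), whereas $m=1$ vertices have infinite degree because the two-sided curve in $X$ is special, giving $F(X, \gamma) \cong F_\infty$; since type-3 moves preserve the number of one-sided curves, both endpoints of $XY$ share the same degree, matching ``two triangles'' with ``both endpoints of degree $6$.''

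The main technical obstacle will be the Euler-characteristic argument identifying $K$ as a single pants with two self-gluings (which requires care with the combinatorics of self-glued boundaries under cutting), together with the routine verification that the $\gamma$ produced by the construction is non-trivial in $N$, so that $V$ is indeed a bona fide pants decomposition.
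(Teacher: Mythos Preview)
Your proof is correct and follows essentially the same approach as the paper. Both arguments reduce triangles at $XY$ to a count over the two ``other sides'' of the $N_{1,2}$ support of the move: the paper phrases this via the two boundary components $\delta_1,\delta_2$ of the regular neighbourhood of $\beta_1\cup\beta'_1$ and asks which of them bounds a M\"obius band in $N$, while you phrase it via the pants $P$ adjacent to $\alpha$ in $N_X$ (located by your Euler-characteristic observation) and ask which of its remaining two boundaries is a one-sided curve of $X$; these are the same data, and the $N=N_3$ dichotomy and degree-$6$ computation then proceed identically.
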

\begin{proof}
Let $\fullMv{XY}{\beta_1}{\mvIII}{\beta'_1}$. 
By Lemma \ref{lem:triangles}, an edge of type 3 can belong only to a fan triangle, and $XY$ belongs to such a triangle if and only if 
there is a multicurve $\gamma\subset(X\setminus\beta_1)=(Y\setminus\beta'_1)$ such that the only non-trivial component of $N_\gamma$ is a one-holed Klein bottle $K$ containing $\beta_1$ and $\beta'_1$. Such $\gamma$ uniquely determines a fan triangle $XYZ$ of the form
\[\fullMv{ZX}{\alpha}{\mvIV}{\{\beta_1,\beta_2\}},\quad \fullMv{XY}{\beta_1}{\mvIII}{\beta'_1},\quad \fullMv{YZ}{\{\beta'_1,\beta_2\}}{\mvIVL}{\alpha}.\]
 Indeed, $\alpha$ is the unique two-sided curve in $K$ and $\beta_2\in X^-\cap Y^-$ is the unique one-sided curve in $K$ disjoint form 
  $\beta_1\cup\beta'_1$ -- see Figure \ref{fig:n21:fan:curves}.
  \begin{figure}[h]
\begin{center}
\includegraphics[width=0.45\customwidth]{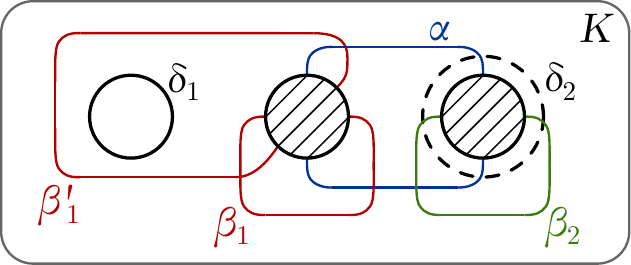}
\caption{Curves of the fan triangle in $N_{2,1}$.}\label{fig:n21:fan:curves} %
\end{center}
\end{figure}

Suppose that such $\gamma$ exists. The regular neighbourhood of $\beta_1\cup \beta'_1$ is a projective plane $N_{1,2}$ with two boundary components $\delta_1,\delta_2$ and the existence of $\gamma$ implies that at least one of these two boundary components, say $\delta_2$, cuts a projective plane from $N$ (Figure \ref{fig:n21:fan:curves}). Therefore, if $N\ne N_3$, then $\gamma$ is unique: $\gamma=X\setminus\{\beta_1,\beta_2\}$.

If $N=N_3$, then $\gamma$ is a single one-sided curve disjoint from $\beta_1\cup\beta'_1$, and there are two such curves (Figure \ref{fig:n3:two:bottles}).
\begin{figure}[h]
\begin{center}
\includegraphics[width=0.45\customwidth]{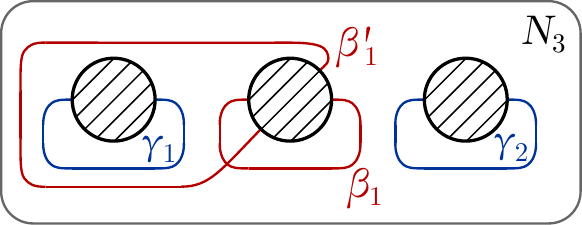}
\caption{One-sided curves in $N_3$.}\label{fig:n3:two:bottles} %
\end{center}
\end{figure}
In this case each of the vertices $X$ and $Y$ is a pants decomposition consisting of three one-sided curves. Such a vertex has degree $6$, as it is incident to three edges of type 3 and three edges of type 4 (Figure \ref{fig:n3:star}).
\begin{figure}[h]
\begin{center}
\includegraphics[width=0.95\customwidth]{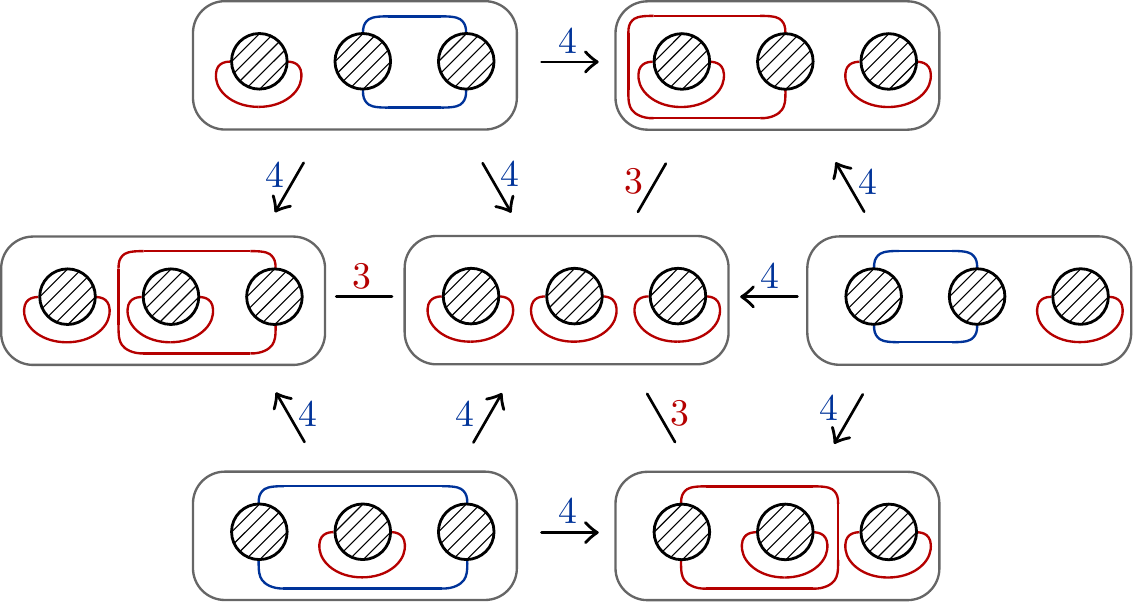}
\caption{The vertex of degree 6 in $\pants(N_3)$.}\label{fig:n3:star} %
\end{center}
\end{figure}
\end{proof}

\begin{example}\label{ex:lonely:edge}
Let 
\[\fullMv{XY}{\{\alpha,\beta\}}{\mvIII}{\{\alpha,\beta'\}}\]
be an edge of type 3 in $\pants(N_3)$, where $\alpha,\alpha',\beta$ are as in Figure~\ref{figNonfan3}.
\begin{figure}[h]
\begin{center}
\includegraphics[width=0.67\customwidth]{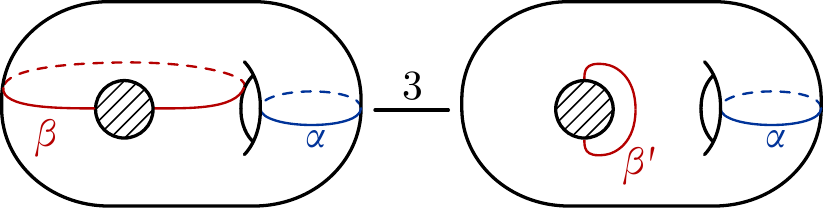} 
\caption{An edge of type 3 in $\pants(N_3)$ that is not contained in any triangle.}\label{figNonfan3} %
\end{center}
\end{figure}
Since the complement of $\beta$ in $N_3$ is orientable, this edge can not be contained in a fan triangle. Moreover, this edge cannot be contained in a Farey triangle, because $\beta$ is one-sided. By Proposition \ref{lem:triangles}, $XY$ is not contained in any triangle.
\end{example}
\begin{defi} \label{def:subgraph}
Let $F$ be a subgraph of $\pants(N)$. We say that 
\begin{enumerate}
    \item $F$ is a \emph{Farey subgraph} if $F$ is isomorphic to the Farey graph;
    \item $F$ is a \emph{fan subgraph} if $F$ is isomorphic to the infinite fan $F_\infty$ and $F$ is not contained in any Farey subgraph;
    \item $F$ is a \emph{thin edge} if $F$ consists of a single edge, $F$ is not contained in any Farey subgraph and if $F$ is contained in a fan subgraph $F'$, then both vertices of $F$ are of finite degree in $F'$.
\end{enumerate}
\end{defi}
\begin{rem}
    Note that every Farey subgraph contains subgraphs that are isomorphic to $F_\infty$. However, by the above definition, they are not fan subgraphs of $\pants(N)$.
\end{rem}
\begin{lemma}\label{edge3:inFarey}
    Let $F$ be an edge of type 3 in $\pants(N)$. Then $F$ is a thin edge.
\end{lemma}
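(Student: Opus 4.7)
The plan is to verify the two nontrivial conditions in Definition \ref{def:subgraph}(3) defining a thin edge: namely, that $F=XY$ lies in no Farey subgraph, and that if $F$ lies in a fan subgraph $F'$ then both $X$ and $Y$ have finite degree in $F'$. My unifying observation will be that, by the classification in Proposition \ref{lem:triangles}, a Farey triangle has only edges of type $1$ or $2$, so every triangle in $\pants(N)$ containing the type $3$ edge $F$ must be a fan triangle. Combining this with Lemma \ref{lem:move3} is the key: $F$ belongs to at most one triangle of $\pants(N)$ when $N\neq N_3$, and to either zero or exactly two triangles when $N=N_3$, with the latter case forcing both $X$ and $Y$ to have degree $6$ in $\pants(N_3)$.

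For the first condition, I would suppose for contradiction that $F\subseteq G$ for some Farey subgraph $G$. Since $G$ is isomorphic to the Farey graph, i.e.\ the $1$-skeleton of the Farey tessellation of the hyperbolic disk, every edge of $G$ lies in exactly two triangles of $G$; hence $F$ lies in at least two triangles of $\pants(N)$. When $N\neq N_3$ this directly contradicts Lemma \ref{lem:move3}(1). When $N=N_3$, Lemma \ref{lem:move3}(2) forces $X$ and $Y$ to have degree $6$ in $\pants(N_3)$; but every vertex of the Farey graph has infinite degree, so $X$ and $Y$ already have infinite degree in $G\subseteq\pants(N_3)$, giving a contradiction.

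For the second condition, let $F\subseteq F'$ with $F'\cong F_\infty$ a fan subgraph, and suppose for contradiction that $X$ is the centre of $F'$. Under a fixed isomorphism $F'\cong F_\infty$ I would write $X=v$ and $Y=v_j$. Then $v_{j-1}$ and $v_{j+1}$ are each adjacent to both $X$ and $Y$ in $F'$, so $F$ belongs to the two distinct triangles $XYv_{j-1}$ and $XYv_{j+1}$ of $\pants(N)$. The same dichotomy from Lemma \ref{lem:move3} applies: either $N\neq N_3$ and $F$ lies in more than one triangle, a contradiction; or $N=N_3$ and $X$ must have degree $6$ in $\pants(N_3)$, contradicting that $X=v$ has infinite degree in $F'$. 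Hence $X$ is not the centre of $F'$, and symmetrically neither is $Y$, so both are non-centre vertices of $F'\cong F_\infty$ and therefore have finite degree (equal to $3$) in $F'$. The only subtlety is the initial reduction through Proposition \ref{lem:triangles}, which channels every triangle through a type $3$ edge into a fan triangle; after that the proof is really just the counting argument above, so I do not expect any serious obstacle.
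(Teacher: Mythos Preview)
Your proof is correct and follows essentially the same route as the paper's: both arguments reduce to the observation that an edge of type~3 incident to a vertex of infinite degree in a Farey or fan subgraph would lie in two distinct triangles, and then invoke Lemma~\ref{lem:move3}. You are simply more explicit than the paper in separating the cases $N\neq N_3$ and $N=N_3$ and in spelling out how the degree-$6$ clause of Lemma~\ref{lem:move3}(2) yields the contradiction; the paper compresses this into a single appeal to Lemma~\ref{lem:move3}. Your preliminary remark about Proposition~\ref{lem:triangles} (that any triangle through $F$ is a fan triangle) is true but not actually needed, as you yourself note at the end---the counting from Lemma~\ref{lem:move3} alone suffices.
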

\begin{proof}
    Every edge in a Farey subgraph $F'$ belongs to two different triangles in $F'$ and every vertex of $F'$ has infinite degree in $F'$. Therefore, by Lemma \ref{lem:move3}, $F$ can not be contained in $F'$.
    
    If $F$ is contained in a fan subgraph $F'$ and one of the vertices of $F$ is of infinite degree in $F'$, then $F$ belongs to two different triangles in $F'$. This contradicts Lemma \ref{lem:move3}.
\end{proof}
\begin{lemma}\label{edge4:inFarey}
If 
\[\fullMv{XY}{\alpha}{\mvIV}{\{\beta_1,\beta_2\}}\] 
is an edge of type 4 in $\pants(N)$, then the fan graph $F(XY)=F(X,\alpha)$ is not contained in any Farey subgraph, so it is a fan subgraph of $P(N)$.
\end{lemma}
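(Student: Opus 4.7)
Suppose for contradiction that $F(X,\alpha)\subseteq F'$ for some Farey subgraph $F'$ of $\pants(N)$. By Remarks~\ref{rem:FAal:char} and \ref{rem:special:in:mvIV}, $F(X,\alpha)\cong F_\infty$ with $X$ as its centre, so the vertex $Y$ is a peripheral vertex of $F_\infty$ and has exactly two neighbours in $F(X,\alpha)$ other than $X$. Pick one of them and call it $Y_1$. Using the explicit isomorphism $F(X,\alpha)\cong\pants(N_{2,1})$ from Proposition~\ref{pantsN21}, the edge $YY_1$ is of type~$3$, and together with $X$ it bounds a fan triangle $XYY_1$ entirely contained in $F(X,\alpha)$. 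In particular, $YY_1$ already belongs to at least one triangle of $\pants(N)$.

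The plan is to derive a contradiction from the presence of this type~$3$ edge inside the hypothetical Farey subgraph $F'$ by invoking Lemma~\ref{lem:move3}. Recall that in any graph isomorphic to the Farey graph every edge belongs to exactly two triangles and every vertex has infinite degree; since $F'\subseteq\pants(N)$, these properties descend to $\pants(N)$ restricted to $F'$. I distinguish two cases.

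If $N\ne N_3$, Lemma~\ref{lem:move3}(1) says that any type~$3$ edge of $\pants(N)$ lies in at most one triangle, contradicting the requirement that $YY_1$ lie in two triangles of $F'\subseteq\pants(N)$. If $N=N_3$, then, since $YY_1$ is already contained in the triangle $XYY_1$, Lemma~\ref{lem:move3}(2) forces $YY_1$ to lie in exactly two triangles of $\pants(N_3)$ and forces both $Y$ and $Y_1$ to be vertices of degree~$6$ in $\pants(N_3)$. In particular $Y$ has finite degree in $\pants(N_3)$, whereas as a vertex of the Farey subgraph $F'$ it must have infinite degree in $F'\subseteq\pants(N_3)$---a contradiction.

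In either case $F(X,\alpha)$ is not contained in any Farey subgraph of $\pants(N)$. Combined with the isomorphism $F(X,\alpha)\cong F_\infty$, Definition~\ref{def:subgraph}(2) then yields that $F(X,\alpha)$ is a fan subgraph, as claimed. The main subtlety is the case $N=N_3$, where Lemma~\ref{lem:move3}(1) no longer rules out a type~$3$ edge belonging to two triangles; the contradiction must instead be extracted from the local finiteness of $\pants(N_3)$ at pants decompositions consisting of three one-sided curves.
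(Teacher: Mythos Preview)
Your proof is correct and follows essentially the same approach as the paper: both exploit that $F(X,\alpha)\cong\pants(N_{2,1})$ contains edges of type~$3$, and then use Lemma~\ref{lem:move3} to show such edges cannot sit inside a Farey subgraph. The only difference is that the paper packages the triangle-count and degree arguments into the preceding Lemma~\ref{edge3:inFarey} and then cites it, whereas you inline that argument directly (and make the case split $N=N_3$ versus $N\ne N_3$ explicit).
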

\begin{proof}
    Recall that $F(XY)=F(X,\alpha)\cong \pants(N_{2,1})$ is a fan subgraph by Remarks \ref{rem:FAal:char} and \ref{rem:special:in:mvIV}. Moreover, by the proof of Proposition \ref{pantsN21}, $\pants(N_{2,1})$ contains edges of type 3. By Lemma~\ref{edge3:inFarey}, $F(XY)$ can not be contained in any Farey subgraph.
\end{proof}
\begin{defi}\label{defi:f:graph}
A \emph{marked F-graph} is a pair $(F,X)$, where $X$ is a vertex of $F$ and $F$ is either
\begin{enumerate}
    \item a Farey subgraph of $\pants(N)$, or
    \item a fan subgraph of $\pants(N)$, and $X$ is the centre of $F$, or else 
    \item a single thin edge in $\pants(N)$.
\end{enumerate}
\end{defi}
\begin{rem}\label{rem:edge:to:Fg}
If we combine Remark \ref{rem:edge:graphs} and Lemmas \ref{edge3:inFarey} and \ref{edge4:inFarey}, we see that to every oriented 
edge $XY$ of $\pants(N)$ with $|X|\leq |Y|$, there is an associated marked F-graph $(F(XY),X)$. 
\end{rem}
 The following proposition shows that the above correspondence is surjective.
\begin{prop}\label{lem:Fsubgraph}
If $(F,X)$ is a marked F-graph, then $F=F(X,\alpha)$ for a unique $\alpha\in X$. Moreover, 
\begin{enumerate}
    \item $\alpha\in X^+$ if and only if $F$ is a Farey or a fan subgraph; 
    \item $\alpha$ is special in $X$ if and only if $F$ is a fan subgraph.
\end{enumerate}
\end{prop}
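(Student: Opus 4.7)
The plan is to exhibit, for each of the three types of marked F-graph in Definition~\ref{defi:f:graph}, a unique curve $\alpha\in X$ satisfying $F=F(X,\alpha)$; Remark~\ref{rem:FAal:char} then delivers (1) and (2) at once, since it classifies $F(X,\alpha)$ as a Farey subgraph, a fan subgraph centred at $X$, or a single edge according to whether $\alpha$ is a non-special two-sided curve, a special two-sided curve, or a one-sided curve of $X$. Uniqueness is immediate, because any edge at $X$ in $F(X,\alpha)$ replaces exactly one curve of $X$, so $F(X,\alpha)\neq F(X,\alpha')$ whenever $\alpha\neq\alpha'$.

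For existence in the thin-edge case, with $F=\{XY\}$, I first show that $XY$ must be of type~$3$. Types~$1$ and~$2$ are ruled out because then $F(XY)$ would be a Farey subgraph containing the thin edge $F$, contradicting Definition~\ref{def:subgraph}(3); type~$4$ is ruled out because then by Lemma~\ref{edge4:inFarey} $F(XY)\cong F_\infty$ would be a fan subgraph whose centre, namely the special endpoint of $XY$, has infinite degree, again contradicting Definition~\ref{def:subgraph}(3). Hence $XY$ is of type~$3$ and $F=F(XY)=F(X,\alpha)$ for $\alpha\in X^-$. When $F$ is a Farey subgraph, or a fan subgraph centred at $X$, the vertex $X$ has infinite degree in $F$ and its link is a bi-infinite path $\ldots Y_{-1},Y_0,Y_1,\ldots$ Each triple $XY_iY_{i+1}\subset F$ spans a triangle in $\pants(N)$, which by Proposition~\ref{lem:triangles} is Farey or fan. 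The crucial step is that any fan triangle of $F$ through $X$ must have $X$ itself as its special vertex: if instead $Y_i$ were special, the edge $XY_{i+1}$ would be of type~$3$ and hence a thin edge by Lemma~\ref{edge3:inFarey}, yet $XY_{i+1}$ sits inside $F$ while $X$ has infinite degree in $F$, violating Definition~\ref{def:subgraph}(3) in either the Farey or the fan subcase. The hypothesis of Corollary~\ref{cor:2:triangles} is therefore satisfied at every triangle of $F$ through $X$, and applying the corollary consecutively along the link path forces all edges $XY_i$ to replace one common curve $\alpha\in X^+$. A Farey and a (valid) fan triangle at $X$ cannot coexist in $F$, for this would require $\alpha$ to be simultaneously non-special and special.

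This argument already yields $F\subseteq F(X,\alpha)$ by propagating along the link of $X$ and, iteratively, along subsequent links. To promote inclusion to equality, I exploit that $F(X,\alpha)$ is an induced subgraph of $\pants(N)$: the bi-infinite link path of $X$ in $F$ embeds into the bi-infinite link path of $X$ in $F(X,\alpha)$ as a sub-path whose consecutive elements remain consecutive, forcing the two paths to coincide. In the Farey case this matching iterates at every vertex of $F$; in the fan case the equality already follows at $X$ from the rigidity of $F_\infty$, whose whole edge structure is determined by the link of its centre. The main obstacle I foresee is the triangle-type analysis at $X$, where the definitions of thin edge, Farey subgraph and fan subgraph must combine with Lemma~\ref{edge3:inFarey} to both pin down a single $\alpha$ and forbid mixed Farey/fan configurations along the link.
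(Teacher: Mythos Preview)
Your proof is correct and follows essentially the same strategy as the paper's: identify $\alpha\in X^+$ via Corollary~\ref{cor:2:triangles} on a triangle at $X$, propagate along adjacent triangles to obtain $F\subseteq F(X,\alpha)$, and handle the thin-edge case by excluding types 1, 2, 4. The only differences are cosmetic---the paper rules out type-3 edges at $X$ via Lemma~\ref{lem:move3} rather than Lemma~\ref{edge3:inFarey}, and upgrades the inclusion to equality by invoking the co-Hopfian property of the Farey graph and of $F_\infty$ (every injective simplicial self-map is an isomorphism) rather than your explicit link-path matching.
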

\begin{proof} 
Suppose first that $F$ is a Farey of fan subgraph of $\pants(N)$ and let $T$ be a triangle of $F$ with vertices $X,Y,Z$, where $X$ is of infinite degree in $F$ (see Definition \ref{defi:f:graph}). By Proposition~\ref{lem:triangles}, $T$ is either a Farey triangle or a fan triangle. In the latter case, since $X$ is of infinite degree in $F$, each of the edges $XY$ and $XZ$ belongs to two different triangles in $F$, hence Lemma \ref{lem:move3} implies that $XY$ and $XZ$ are moves of type 4. In either case $T$ satisfies the assumptions of Corollary \ref{cor:2:triangles}, so $T\subset F(X,\alpha)$ for a unique $\alpha\in X^+$. 

Now, if $T'$ is any triangle of $F$, then $T'$ and $T$ are connected by a sequence of triangles such that every two consecutive triangles in that sequence satisfy the assumptions of part (2) of Corollary \ref{cor:2:triangles}. It follows that all these triangles are contained in $F(X,\alpha)$ and consequently $F\subseteq F(X,\alpha)$. Since every injective simplicial map $F\to F$ is an isomorphism, $F=F(X,\alpha)$.


Assume now that $F$ is a thin edge:
\[\fullMv{XY}{\alpha}{\mv}{\multi{\alpha}'}\]
If $XY$ is an edge of type 1 or 2, then $F$ is contained in a Farey graph $F(XY)=F(X,\alpha)$. This contradicts the definition of a thin edge. Similarly, if $XY$ is of type 4, then Remark \ref{rem:special:in:mvIV} and Lemma \ref{edge4:inFarey} imply that $X$ is of infinite degree in a fan subgraph $F(XY)=F(X,\alpha)$. This contradicts the definition of a thin edge. Hence, $F$ is of type 3, $F=F(X,\alpha)$ (see Remark \ref{rem:FAal:char}) and $\alpha$ is the unique element of $X$ such that $i(\alpha,\alpha')>0$. 

Statements (1) and (2) follow from Remarks \ref{rem:FAal:char} and \ref{rem:special:in:fan}.


\end{proof}
The construction of $F(X,\alpha)$ in the proof of the above proposition implies the following corollary.
\begin{cor}\label{cor:edge:in:F}
    If $(F,X)$ is a marked F-graph and 
    \[\fullMv{XY}{\alpha}{\mv}{\multi{\alpha}'}\]
    is an edge of $F$, then $F=F(X,\alpha)=F(XY)$.
\end{cor}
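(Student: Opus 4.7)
The plan is to use Proposition \ref{lem:Fsubgraph} to write $F = F(X,\alpha_0)$ for a unique $\alpha_0 \in X$, and then show that $\alpha_0 = \alpha$, where $\alpha$ is the curve in $X$ replaced by the move $XY$; the identity $F(X,\alpha) = F(XY)$ is already built into Remark \ref{rem:edge:graphs}.

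First I would dispose of the thin edge case. Here $F$ is a single edge $XY$, and by Remark \ref{rem:FAal:char}(3) together with Remark \ref{rem:edge:graphs}(3), each of $F(X,\alpha_0)$ and $F(XY) = F(X,\alpha)$ is a single edge of type 3 emanating from $X$. Since Proposition \ref{lem:Fsubgraph} says $\alpha_0$ is the unique element of $X$ with $F = F(X,\alpha_0)$, we must have $\alpha_0 = \alpha$.

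For the Farey/fan case, the idea is to exhibit a triangle $T = XYZ$ of $F$ containing the edge $XY$ and satisfying the hypothesis of Corollary \ref{cor:2:triangles}(1), so that $T \subset F(XY) = F(X,\alpha)$. If $F$ is a Farey subgraph, every edge of $F$ lies inside a triangle, and any such triangle is automatically a Farey triangle by Proposition \ref{lem:triangles} (all of its edges are of type 1 or 2), so the hypothesis of Corollary \ref{cor:2:triangles}(1) is vacuous. If $F$ is a fan subgraph, then by Definition \ref{defi:f:graph}(2) the vertex $X$ is the centre of $F$, so $XY$ corresponds to an edge $v v_i$ in $F_\infty$ and is of type 4 by the description of $\pants(N_{2,1})$ in Proposition \ref{pantsN21}; the triangle $X v_i v_{i+1}$ then has its edge opposite $X$ of type 3, again matching the hypothesis. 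In both situations Corollary \ref{cor:2:triangles}(1) applies and yields $T \subset F(X,\alpha) = F(XY)$.

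On the other hand, the construction carried out in the proof of Proposition \ref{lem:Fsubgraph} shows that every triangle of $F$ lies in $F(X,\alpha_0)$; in particular $T \subset F(X,\alpha_0)$. The only genuine (if mild) obstacle is to deduce $\alpha_0 = \alpha$ from these two containments, and for this I would argue that if $\alpha_0 \neq \alpha$, then any common vertex $W \in F(X,\alpha_0) \cap F(X,\alpha)$ would have to contain $(X \setminus \{\alpha_0\}) \cup (X \setminus \{\alpha\}) = X$, forcing $W = X$ by maximality of pants decompositions. But the triangle $T$ has three distinct vertices sitting inside both subgraphs, a contradiction. Hence $\alpha_0 = \alpha$, and the corollary follows.
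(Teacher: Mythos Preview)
Your argument is correct and follows essentially the same route as the paper, which simply observes that the construction in the proof of Proposition~\ref{lem:Fsubgraph} already identifies the curve $\alpha_0$ via any triangle of $F$ incident to $X$, in particular one containing the given edge $XY$. Your explicit intersection argument, that $F(X,\alpha_0)\cap F(X,\alpha)=\{X\}$ whenever $\alpha_0\ne\alpha$, is a clean alternative to invoking the uniqueness clause of Proposition~\ref{lem:Fsubgraph}; the one place to tighten is the Farey case, where the claim ``all of its edges are of type 1 or 2'' should be justified by $F=F(X,\alpha_0)$ with $\alpha_0\in X^+$ non-special (Remark~\ref{rem:FAal:char}), rather than by Proposition~\ref{lem:triangles} alone.
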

\begin{cor}\label{lem:trian:in:Fgraphs}
    If $F$ is a Farey (resp. fan) subgraph of $\pants(N)$, then every triangle of $F$ is a Farey (resp. fan) triangle.
\end{cor}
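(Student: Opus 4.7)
The plan is to identify $F$ with a subgraph of the form $F(X,\alpha)$ via Proposition \ref{lem:Fsubgraph}, read off the possible types of its edges from Remark \ref{rem:FAal:char}, and then invoke the classification in Proposition \ref{lem:triangles} to see that only the desired triangle form is compatible.

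In the Farey case, I would pick any vertex $X$ of $F$, so that $(F,X)$ is a marked F-graph by Definition \ref{defi:f:graph}(1). Proposition \ref{lem:Fsubgraph} then yields $F=F(X,\alpha)$ for some $\alpha\in X^+$ which is not special in $X$ (because $F$ is not a fan subgraph). By Remark \ref{rem:FAal:char}, the non-trivial component $M$ of $N_{X\setminus\{\alpha\}}$ is $S_{0,4}$ or $S_{1,1}$, so every edge of $F\cong\pants(M)$ is a move supported on $M$, hence of type $2$ or of type $1$ respectively. Every triangle in $F$ therefore has all three edges of a common type $i\in\{1,2\}$, which by Proposition \ref{lem:triangles} leaves only form (1): a Farey triangle.

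In the fan case, I would take $X$ to be the centre of $F$, which exists by Definition \ref{defi:f:graph}(2) and makes $(F,X)$ a marked F-graph. Proposition \ref{lem:Fsubgraph} gives $F=F(X,\alpha)$ with $\alpha$ special in $X$, so the non-trivial component of $N_{X\setminus\{\alpha\}}$ is the one-holed Klein bottle $N_{2,1}$. By Proposition \ref{pantsN21} (and the edges exhibited in its proof), every edge of $F\cong\pants(N_{2,1})\cong F_\infty$ is of type $4$ (if incident to the centre) or of type $3$ (otherwise). Form (1) of Proposition \ref{lem:triangles} is then excluded since it requires edges of type $1$ or $2$, so every triangle of $F$ must be of form (2): a fan triangle.

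I do not anticipate a serious obstacle here. The statement is essentially a bookkeeping consequence of Proposition \ref{lem:Fsubgraph} (which pins down $F$) and Proposition \ref{lem:triangles} (which restricts triangle shapes). The only point requiring minor care is choosing the base vertex correctly in each case, so that Definition \ref{defi:f:graph} applies: for a Farey subgraph any vertex works, while for a fan subgraph one must take the centre.
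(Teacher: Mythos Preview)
Your proposal is correct and takes essentially the same approach as the paper: identify $F$ as $F(X,\alpha)$ via Proposition \ref{lem:Fsubgraph}, read off the support $M$ from Remark \ref{rem:FAal:char}, and conclude about the triangle types. The paper's proof is terser---it simply asserts that the triangle type is determined by whether $\alpha$ is special---while you spell out the edge-type accounting and invoke Proposition \ref{lem:triangles} explicitly, but the substance is identical.
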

\begin{proof}
Let $F= F(X,\alpha)$ be as in Proposition \ref{lem:Fsubgraph}. By Remark \ref{rem:FAal:char}, all triangles of $F(X,\alpha)$ are either fan or Farey depending on whether $\alpha\in X^+$ is special or not.
\end{proof}
\begin{cor}\label{A:inv:F:graph}
    If $N, N'$ are nonorientable surfaces of negative Euler characteristic and $A\colon\pants(N)\to\pants(N')$ is an isomorphism, then for any edge $XY$ of $\pants(N)$
    \[A(F(XY))=F(A(X)A(Y)).\]
\end{cor}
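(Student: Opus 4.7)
The plan is to combine the combinatorial characterization of marked F-graphs (Definition \ref{defi:f:graph} and Proposition \ref{lem:Fsubgraph}) with the observation that the defining properties of Farey subgraphs, fan subgraphs, and thin edges (Definition \ref{def:subgraph}) depend only on the isomorphism type of subgraphs and on their containments in other subgraphs, features preserved by any graph isomorphism. The bridge from the resulting marked F-graph back to the form $F(A(X)A(Y))$ will then be supplied by Corollary \ref{cor:edge:in:F}.

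First, I would orient the edge so that $|X|\le |Y|$; strict inequality can occur only when $XY$ is of type $4$, in which case $X$ is the center of the fan $F(XY)$ by Remark \ref{rem:special:in:mvIV}. By Remark \ref{rem:edge:to:Fg}, the pair $(F(XY),X)$ is a marked F-graph. Next, I would verify that $(A(F(XY)),A(X))$ is itself a marked F-graph in $\pants(N')$, by inspecting the three cases of Definition \ref{defi:f:graph}. If $F(XY)$ is a Farey subgraph, then $A(F(XY))\cong \farey$ and any vertex marking is admissible. If $F(XY)$ is a fan subgraph, then $A(F(XY))\cong F_\infty$; it cannot sit inside any Farey subgraph $F'$ of $\pants(N')$, for otherwise $F(XY)\subseteq A^{-1}(F')$ would contradict $F(XY)$ being a fan subgraph, since $A^{-1}(F')$ is itself isomorphic to the Farey graph. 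Since $X$ has infinite degree in $F(XY)$ as its center, $A(X)$ has infinite degree in $A(F(XY))$ and is therefore its center. The thin-edge case is handled identically, transporting both exclusion conditions through $A^{-1}$ and using that vertex degrees are an isomorphism invariant.

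Finally, the edge $A(X)A(Y)$ lies in the marked F-graph $(A(F(XY)),A(X))$, so Corollary \ref{cor:edge:in:F} immediately yields $A(F(XY))=F(A(X)A(Y))$. I do not foresee any serious obstacle: the argument is essentially a translation through the combinatorial definitions, and the only non-trivial checkpoint is the identification of the center in the fan case, which is forced by the characterization of the center of $F_\infty$ as its unique vertex of infinite degree.
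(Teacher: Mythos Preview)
Your proposal is correct and follows essentially the same approach as the paper's proof: orient the edge, use Remark \ref{rem:edge:to:Fg} to obtain a marked F-graph, check that its image under $A$ is again a marked F-graph, and then apply Corollary \ref{cor:edge:in:F}. The paper simply asserts the preservation of the marked F-graph structure by pointing to Definitions \ref{def:subgraph} and \ref{defi:f:graph}, whereas you spell out the case analysis explicitly; both arguments are the same in substance.
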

\begin{proof} Assume $|X|\leq |Y|$. 
    By Remark \ref{rem:edge:to:Fg}, $(F(XY),X)$ is a marked F-graph, so \[(A(F(XY)),A(X))\] is also a marked F-graph (see Definitions \ref{def:subgraph} and \ref{defi:f:graph}). Moreover, $A(X)A(Y)$ is and edge of $A(F(XY))$, so Corollary \ref{cor:edge:in:F} implies that
    \[A(F(XY))=F(A(X)A(Y)).\]
\end{proof}

\begin{prop}[Characterisation of moves]\label{lem:moves_char}
Let $XY$ be an edge of $\pants(N)$. 
\begin{enumerate}
    \item $XY$ is a move of type 1 or 2 if and only if $F(XY)$ is a Farey subgraph;
    \item $XY$ is a move of type 4 directed from $X$ to $Y$ if and only if $F(XY)$ is a fan subgraph with centre $X$; 
    \item $XY$ is a move of type 3 if and only if $F(XY)$ is a thin edge.
\end{enumerate}
\end{prop}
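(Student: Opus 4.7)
The plan is to derive the proposition purely by assembling previously established results with essentially no new geometric input: the forward implications are already available in the excerpt, and the converses follow once one observes that the three right-hand conditions are mutually exclusive.

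For the forward directions, I would invoke Remark \ref{rem:edge:graphs}: if $XY$ is of type 1 or 2, then $F(XY)$ is isomorphic to the Farey graph and thus is a Farey subgraph by Definition \ref{def:subgraph}(1). Lemma \ref{edge3:inFarey} directly gives that an edge of type 3 is a thin edge. Lemma \ref{edge4:inFarey} says that an edge of type 4 has $F(XY)$ equal to a fan subgraph. For the centre assertion in (2), Remark \ref{rem:special:in:mvIV} gives that $X$ is of infinite degree in $F(X,\alpha) = F(XY) \cong F_\infty$, and in $F_\infty$ the centre is the unique vertex of infinite degree, so $X$ is the centre.

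For the converses I would use the fact that every edge of $\pants(N)$ belongs to exactly one of the types $1$, $2$, $3$, $4$, so the forward implications partition the edges of $\pants(N)$ according to the type of F-subgraph that $F(XY)$ represents (types 1 and 2 producing the same type of F-subgraph). It therefore suffices to verify that the three conditions \textit{``$F(XY)$ is a Farey subgraph''}, \textit{``$F(XY)$ is a fan subgraph''}, and \textit{``$F(XY)$ is a thin edge''} are pairwise incompatible. A Farey subgraph is infinite and so cannot equal a thin edge (which has a single edge); likewise a fan subgraph is infinite and so cannot equal a thin edge. Finally, by Definition \ref{def:subgraph}(2) a fan subgraph is not contained in any Farey subgraph, so in particular cannot itself be one. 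Once these three exclusions are recorded, each converse follows immediately from the corresponding forward direction.

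Part (2) carries the extra datum that $X$ must be the centre. If $F(XY)$ is a fan subgraph with centre $X$, then by the mutual exclusivity argument above $XY$ is a move of type $4$. Its direction is intrinsic (from the pants decomposition of smaller cardinality to that of larger cardinality), say $XY$ is directed from $X'$ to $Y'$ where $\{X',Y'\}=\{X,Y\}$. The already-established forward implication of (2) then identifies $X'$ as the centre of $F(XY)$, and since the centre of a fan is unique we conclude $X'=X$. I do not expect any real obstacle in this argument; the only point that requires care is to keep the three characterizations strictly separated using the clauses of Definition \ref{def:subgraph}, so that the mutual exclusivity is genuine rather than tautological.
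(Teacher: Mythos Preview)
Your argument is correct. The forward directions match the paper's use of Remark~\ref{rem:edge:to:Fg} (which is nothing more than the package of Remark~\ref{rem:edge:graphs} together with Lemmas~\ref{edge3:inFarey} and~\ref{edge4:inFarey} that you cite individually). The one implicit step worth making explicit is that for a type~3 edge one has $F(XY)=XY$ as subgraphs, so that Lemma~\ref{edge3:inFarey} really is a statement about $F(XY)$; this follows from Remark~\ref{rem:edge:graphs}(3).

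For the converses you take a genuinely different route from the paper. The paper argues directly: if $F(XY)$ is a Farey (resp.\ fan) subgraph, then by Corollary~\ref{lem:trian:in:Fgraphs} every triangle in it is a Farey (resp.\ fan) triangle, which forces the type of $XY$; the thin-edge case is then handled by elimination. Your approach bypasses Proposition~\ref{lem:Fsubgraph} and Corollary~\ref{lem:trian:in:Fgraphs} entirely: since the forward implications assign to each edge type a class of F-graph, and the three classes are pairwise disjoint (by Definition~\ref{def:subgraph}(2) and cardinality), the map is automatically a bijection. This is slightly more economical, as it uses only the definitions and the three forward lemmas, whereas the paper's argument invokes the heavier structural result that every marked F-graph arises as some $F(X,\alpha)$. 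On the other hand, the paper's route makes the geometric content (which triangles live where) more visible, and that corollary is needed elsewhere anyway.
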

\begin{proof} Assume $|X|\le |Y|$. 
    By Remark \ref{rem:edge:to:Fg}, $(F(XY),X)$ is a marked F-graph and its type is uniquely determined by the type of $XY$ by Remark \ref{rem:edge:graphs}.
    
    Conversely, if $F(XY)$ is a Farey subgraph, then Corollary \ref{lem:trian:in:Fgraphs} implies that $XY$ is of type 1 or 2. If $F(XY)$ is a fan subgraph with centre $X$, then Corollary \ref{lem:trian:in:Fgraphs} implies that $XY$ is of type 3 or 4. But we also know that $X$ is of infinite degree in $F(XY)$, so Lemma \ref{edge3:inFarey} implies that $XY$ is an edge of type 4 and $|X|<|Y|$. Finally, if $F(XY)$ is a thin edge, then $XY$ can not be of types 1, 2 or 4 by (1) and (2).
\end{proof}
\begin{rem}
    Note that there are two different types of edges of type 3 -- some of them are contained in some fan subgraph of $\pants(N)$, but some of them are not -- see Example \ref{ex:lonely:edge}.
\end{rem}
\begin{cor}\label{cor:type_preserv}
    If $N$, $N'$ are nonorientable surfaces of negative Euler characteristic and $A\colon\pants(N)\to\pants(N')$ is an isomorphism, then $A$ maps an edge of type $3$ or $4$ to an edge of the same type. Furthermore, $A$ also preserves directions of edges of type 4. 
\end{cor}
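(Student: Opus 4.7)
The plan is to derive this corollary directly from the combination of Proposition \ref{lem:moves_char} and Corollary \ref{A:inv:F:graph}. The key observation is that the three types of marked F-graphs introduced in Definition \ref{def:subgraph} are characterized by purely graph-theoretic properties (being isomorphic to the Farey graph, being isomorphic to $F_\infty$ and not sitting inside a Farey subgraph, being a single edge with certain non-containment conditions), and hence they are preserved by any graph isomorphism.

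First I would handle type 3. Assume $XY$ is an edge of type 3. By Proposition \ref{lem:moves_char}(3), $F(XY)$ is a thin edge. By Corollary \ref{A:inv:F:graph} we have $F(A(X)A(Y))=A(F(XY))$. Since $A$ is a graph isomorphism, $A(F(XY))$ is again a single edge, is not contained in any Farey subgraph (since $A^{-1}$ would map such a Farey subgraph back to one containing $F(XY)$), and satisfies the finite-degree condition inside any fan subgraph containing it. So $A(F(XY))$ is a thin edge, and applying Proposition \ref{lem:moves_char}(3) again yields that $A(X)A(Y)$ is of type 3.

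Next, for type 4, assume $XY$ is a move of type 4 directed from $X$ to $Y$, so $|X|<|Y|$. By Proposition \ref{lem:moves_char}(2), $F(XY)$ is a fan subgraph with centre $X$, where by Remark \ref{rem:special:in:fan} the centre is characterised as the unique vertex of infinite degree in $F(XY)$. By Corollary \ref{A:inv:F:graph}, $A(F(XY))=F(A(X)A(Y))$ is a subgraph isomorphic to $F_\infty$, and it is not contained in any Farey subgraph of $\pants(N')$ (again using that $A^{-1}$ would pull back a containing Farey subgraph to one containing $F(XY)$). Therefore $F(A(X)A(Y))$ is a fan subgraph in the sense of Definition \ref{def:subgraph}. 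Because $A$ preserves vertex degrees, $A(X)$ is the unique vertex of infinite degree in $F(A(X)A(Y))$, so it is the centre. By Proposition \ref{lem:moves_char}(2), the edge $A(X)A(Y)$ is of type 4 directed from $A(X)$ to $A(Y)$, which is exactly the direction-preservation statement.

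There is essentially no serious obstacle: all the real work was done in Propositions \ref{lem:triangles}, \ref{lem:Fsubgraph}, \ref{lem:moves_char}, and in Corollary \ref{A:inv:F:graph}. The only thing requiring attention is to verify that the defining properties of the three F-graph types in Definition \ref{def:subgraph} are invariant under graph isomorphism — but this is immediate since each property is phrased entirely in terms of graph isomorphism class, subgraph containment, and vertex degree, all of which transfer through $A$ and $A^{-1}$.
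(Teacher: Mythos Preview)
Your proposal is correct and follows essentially the same approach as the paper: both arguments combine Corollary \ref{A:inv:F:graph} with Proposition \ref{lem:moves_char}, using that the defining properties of Farey, fan, and thin-edge subgraphs are graph-theoretic and hence preserved by $A$. You simply spell out in slightly more detail why each F-graph type is invariant under isomorphism, whereas the paper's proof leaves this implicit in the isomorphism $F(A(X)A(Y))\cong F(XY)$.
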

\begin{proof}
By Corollary \ref{A:inv:F:graph}, if $XY$ is an edge of type 3 or 4 in $\pants(N)$, then 
\[F(A(X)A(Y))=A(F(XY))\cong F(XY),\]
so $A(X)A(Y)$ and $XY$ are of the same type by Proposition \ref{lem:moves_char}. Furthermore, if $|X|<|Y|$ then $X$ is the centre of 
$F(XY)$,  so $A(X)$ is the centre of 
$A(F(XY))$ and  $|A(X)|<|A(Y)|$.
\end{proof}

\section{Small circuits}\label{sec:Circuits}
\begin{defi}
    A \emph{circuit} (resp. a \emph{path}) is a subgraph of $\pants(N)$ homeomorphic to a circle (resp. an interval). 
    The \emph{length} of a circuit (resp. a path) is the number of its edges.
\end{defi}
We use the names \emph{triangle, quadrangle, pentagon, hexagon, heptagon} for 
circuits of length at most $7$.
\begin{defi}\label{def:alter}
A path or a circuit 
 is \emph{alternating} 
 if no two consecutive edges (i.e. edges incident to a common vertex) are contained in one Farey or fan subgraph.
\end{defi}
\begin{rem}\label{rem:alter:inv}
Alternating paths (resp. circuits) are mapped to alternating paths (resp. circuits) by automorphisms of $\pants(N)$.
\end{rem}
\begin{lemma}\label{lem:alter:char}
\begin{enumerate}
    \item Every alternating path of length 3 is of the form
\begin{equation}\label{eq:alter}
    \{\multi{\alpha_1},\multi{\alpha_2}\}\mv\{\multi{\alpha_1}',\multi{\alpha_2}\}\mv\{\multi{\alpha_1}',\multi{\alpha_2}'\}.
\end{equation}
\item If $P$ is a path of the form \eqref{eq:alter}, then either $P$ is alternating, or $P$ is contained in a fan subgraph and both edges of $P$ are of type 3.
\end{enumerate}
\end{lemma}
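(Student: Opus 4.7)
The plan begins by introducing standard notation. Write the path as $X_0\mv X_1\mv X_2$ and set $\underline{\alpha}_1 = X_0\setminus X_1$, $\underline{\alpha}'_1 = X_1\setminus X_0$, $\underline{\alpha}_2 = X_1\setminus X_2$, $\underline{\alpha}'_2 = X_2\setminus X_1$. By the elementary move list, each of these multicurves consists of either a single curve or a pair of one-sided curves, determined by the type of the corresponding edge. A direct set-theoretic check shows that the form \eqref{eq:alter}, with the understood common background $Y = X_0\cap X_1\cap X_2$ suppressed from the notation, is equivalent to the single disjointness condition $\underline{\alpha}'_1\cap\underline{\alpha}_2 = \emptyset$.

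For Part (1), I would prove the contrapositive: assuming some $\gamma\in\underline{\alpha}'_1\cap\underline{\alpha}_2$, exhibit an F-subgraph containing both edges, contradicting the alternating assumption via Proposition~\ref{lem:moves_char}. The argument splits by the types of the two edges at $X_1$. If both edges are of type~1, 2, or 3 then $\underline{\alpha}'_1 = \underline{\alpha}_2 = \{\gamma\}$ and both edges lie in $F(X_1,\gamma)$. If exactly one edge (say $e_1$) is of type~4, so $\underline{\alpha}'_1 = \{\gamma,\gamma'\}$ and $X_0 = (X_1\setminus\{\gamma,\gamma'\})\cup\{\sigma\}$, then one checks $X_0\setminus\{\sigma\}\subseteq X_2$, which places $X_2$ in the fan subgraph $F(e_1) = F(X_0,\sigma)$; hence both edges lie in this fan. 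Finally, if both edges are of type~4, each is supported on a one-holed Klein bottle subsurface of the pants decomposition $X_1$, and both Klein bottles contain $\gamma$ in their interior. Since a one-sided curve of $X_1$ belongs to a unique pants piece, and a one-holed Klein bottle in the decomposition is determined by its unique pants piece together with its two interior one-sided curves, the two Klein bottles must coincide; hence $\underline{\alpha}'_1 = \underline{\alpha}_2$, and since $N_{2,1}$ has a unique non-separating two-sided curve (Proposition~\ref{pantsN21}), $X_0 = X_2$, contradicting that $P$ is a path.

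For Part (2), assume $P$ is of form \eqref{eq:alter} (so $\underline{\alpha}'_1\cap\underline{\alpha}_2 = \emptyset$) and not alternating, and choose a Farey or fan subgraph $F$ containing both edges. By Proposition~\ref{lem:Fsubgraph}, if $F$ is Farey or a fan with centre $X_1$, then $F = F(X_1,\zeta)$ for a unique $\zeta\in X_1$, forcing $\underline{\alpha}'_1 = \underline{\alpha}_2 = \{\zeta\}$ and violating disjointness. Hence $F$ is a fan with $X_1$ as an outer vertex. In the model $F\cong F_\infty$ the vertex $X_1$ has exactly three incident edges in $F$: one of type~4 to the centre and two of type~3 to adjacent outer vertices. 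If either $e_1$ or $e_2$ were the type-4 edge, then the pair $\underline{\alpha}'_1$ or $\underline{\alpha}_2$ arising from that edge would contain the one-sided curve changed by the other, again contradicting disjointness. Therefore both edges must be type-3 edges to outer neighbours of $X_1$, as claimed. The main obstacle throughout is the type-4/type-4 case in Part (1); the clean route through it is the observation that a one-holed Klein bottle subsurface of a pants decomposition of $N$ is uniquely determined by any one-sided curve it contains in its interior, through the uniqueness of the pants piece adjacent to that curve.
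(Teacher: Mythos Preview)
Your Part~(2) argument is correct and in fact more carefully spelled out than the paper's. Part~(1), however, contains a genuine error in your type-4/type-4 case. You argue that the two one-holed Klein bottle supports, both containing the shared one-sided curve $\gamma\in X_1$, must coincide because each arises from the unique pants piece of $N_{X_1}$ adjacent to $\gamma$. But that pants piece may have \emph{all three} of its boundary curves one-sided, and then there are three distinct Klein bottles through $\gamma$, one for each choice of the second interior curve. This occurs precisely when $N=N_3$: with $X_1=\{\beta_1,\beta_2,\beta_3\}\in V_3$, $X_0=v_2(\beta_3)$ and $X_2=v_2(\beta_2)$, both edges are type-4 moves towards $X_1$ whose supports share $\beta_1$ but are different, the path is alternating (the two edges lie in distinct fan subgraphs with centres $X_0\ne X_2$), yet it is not of the form~\eqref{eq:alter}. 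So Part~(1) as literally stated is false for $N_3$, and your Klein-bottle uniqueness claim cannot be repaired without an extra hypothesis.

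The paper's own proof has the same blind spot, only hidden: in its subcase $\multi{\alpha}'_1=\{\beta_1,\beta_2\}$ it asserts that $YZ$ is of type~3, silently excluding the possibility that $YZ$ is type~4 directed towards $Y$. Your Klein-bottle argument does handle that missing possibility correctly for every $N\ne N_3$ (since then no pants piece has three one-sided boundaries), so outside $N_3$ your proof is actually more complete than the paper's. The lemma is only ever invoked, in Lemma~\ref{alter_tight_path}, under the additional hypothesis that the middle vertex is minimal, which forbids type-4 edges directed towards it and so excludes the counterexample. A secondary issue: your case split is by edge \emph{type}, but whether $\multi{\alpha}'_1$ or $\multi{\alpha}_2$ is a pair depends on the \emph{direction} of a type-4 edge; your Cases~2 and~3 as written omit type-4 edges directed away from $X_1$, though those sub-cases reduce immediately to your Case~1 argument.
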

Recall that according to our convention, each of $\multi{\alpha_1},\multi{\alpha'_1},\multi{\alpha_2},\multi{\alpha'_2}$ may be a pair of one-sided curves corresponding to a move of type 4.
\begin{proof}
Let $P$ be a path $X\mv Y\mv Z$.

(1) Suppose that $P$ is alternating, $XY\colon\multi{\alpha_1}\mv\multi{\alpha_1}'$, and $\multi{\alpha}_1'\notin Z$.
If $\multi{\alpha_1}'=\alpha_1'$ is a single curve, then the path is contained in $F(Y,\alpha'_1)$, and since it has length 3, $\alpha'_1$ is two-sided and $F(Y,\alpha'_1)$ is either a Farey subgraph or a fan subgraph by Remark \ref{rem:FAal:char}. This is a contradiction, because we assumed that the path is alternating.
If $\multi{\alpha_1}'=\{\beta_1,\beta_2\}$ is a multicurve, then $YZ$ is an edge of type 3 replacing one of the curves 
$\beta_1$ or $\beta_2$. It follows that the path is contained in the fan subgraph $F(X,\alpha_1)$, which is also a contradiction.
Therefore $\multi{\alpha_1}'\in Z$, which implies that the path is of the form \eqref{eq:alter}. 

(2) Suppose that $P$ is of the form \eqref{eq:alter} and $P\subset F$, where $F$ is either a Farey or a fan subgraph. If $Y$ has infinite degree in $F$, then $(F,Y)$ is a marked $F$-graph and by Lemma~\ref{lem:Fsubgraph} we have $F=F(Y,\alpha)$ for a unique 
$\alpha\in Y$. Since $X,Z\in F$ we have $\alpha'_1=\alpha=\alpha_2$, which is a contradiction. Thus $Y$ has finite degree in $F$, which means that $F$ is a fan subgraph and both edges of $P$ are of type 3. 
\end{proof}


\begin{defi}\label{def:two:tight}
A subgraph $\Gamma$ (path or circuit)  of $\pants(N)$ is    
\emph{2-tight} if there exists $X_0\in\Gamma^0$ such that 
\[\left|\bigcap_{X\in\Gamma^0}X\right|\geq |X_0|-2.\]
\end{defi}
\begin{rem}
    Note that a subpath of a 2-tight path may not be 2-tight. For example, the following circuit of length 4
    \[\xymatrix{ 
    \{\alpha,\beta\}\ar[r]^(.45)4\ar[d]_4&\{\alpha,\beta_1,\beta_2\}\ar[d]^4\\
    \{\alpha_1,\alpha_2,\beta\}\ar[r]_(.45)4&\{\alpha_1,\alpha_2,\beta_1,\beta_2\}
    }\]
    is 2-tight, but its subpath
    \[\{\alpha_1,\alpha_2,\beta\}\mvIV \{\alpha_1,\alpha_2,\beta_1,\beta_2\}\mvIVL \{\alpha,\beta_1,\beta_2\}\]
    is not 2-tight.
\end{rem}

\begin{defi}
 A vertex $X$ of a subgraph $\Gamma$ of $\pants(N)$ is \emph{minimal} in $\Gamma$ if $|X|$ is minimal among all vertices of  $\Gamma$. 
\end{defi}

\begin{lemma}\label{lem:minimal}
    Automorphisms of $\pants(N)$ map minimal vertices of a subgraph $\Gamma$ to minimal vertices of the image of $\Gamma$.
\end{lemma}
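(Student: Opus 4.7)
The plan is to show that, for any automorphism $A$ of $\pants(N)$, the quantity $|A(X)| - |X|$ does not depend on the vertex $X \in \pants^0(N)$. Once this is established, the lemma follows immediately: if $X$ is minimal in $\Gamma$ and $Y \in \Gamma^0$ is arbitrary, then $|A(X)| - |A(Y)| = |X| - |Y| \leq 0$, so $A(X)$ is minimal in $A(\Gamma)$.

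To prove this constancy, I would combine the connectedness of $\pants(N)$ (Theorem \ref{pants_connected}) with Corollary \ref{cor:type_preserv}. Given any $X, Y \in \pants^0(N)$, pick a path $X = X_0, X_1, \ldots, X_n = Y$ in $\pants(N)$. The cardinality changes along this path in a tightly controlled way: edges of types $1$, $2$, $3$ preserve cardinality, while an edge of type $4$ changes it by $+1$ or $-1$ according to its direction. By Corollary \ref{cor:type_preserv}, $A$ sends every type-$4$ edge to a type-$4$ edge with the same direction; bijectivity of the induced edge map together with preservation of type-$3$ edges then forces $A$ to send every non-type-$4$ edge to a non-type-$4$ edge. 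Consequently, the signed change in cardinality at each step of the image path $A(X_0), A(X_1), \ldots, A(X_n)$ matches the change at the corresponding step of the original path, and telescoping yields $|A(X)| - |X| = |A(Y)| - |Y|$.

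There is no genuine obstacle here; the argument is essentially a one-line consequence of Corollary \ref{cor:type_preserv} together with Theorem \ref{pants_connected}, once one observes that cardinality changes along an edge are determined by the edge type and direction, both of which $A$ preserves.
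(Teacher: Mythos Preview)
Your argument is correct and essentially the same as the paper's: both reduce the claim to Corollary~\ref{cor:type_preserv} by observing that cardinality changes along a path are governed solely by the directed type-4 edges, which automorphisms preserve. The paper phrases this as an intrinsic characterisation of minimality, while you phrase it as constancy of $|A(X)|-|X|$ via Theorem~\ref{pants_connected}, but the content is identical.
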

\begin{proof}
    A vertex $X$ is minimal in  $\Gamma$ if and only if for every path from $X$ to any other vertex $Y$ of $\Gamma$, the number of edges of type 4 directed from $X$ to $Y$ is greater than the number of edges of type 4 directed from $Y$ to $X$.
    By Corollary \ref{cor:type_preserv} this property is preserved by automorphisms of $\pants(N)$. 
\end{proof}

\begin{lemma}\label{alter_tight_path}
Suppose that $P$ is an alternating and 2-tight path
\[W\mv X\mv Y\mv Z,\] 
such that $X$ is minimal in $P$. Then  $P$ is of the form
\begin{equation}\label{eq:2tight_alter}
    \{\multi{\alpha_1}',\alpha_2\}\mv\{\alpha_1,\alpha_2\}\mv\{\alpha_1,\multi{\alpha_2}'\}\mv\{\multi{\alpha_1}'',\multi{\alpha_2}'\}.
\end{equation}
\end{lemma}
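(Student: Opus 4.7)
The plan is to apply Lemma \ref{lem:alter:char}(1) to each of the two overlapping three-vertex subpaths $W \mv X \mv Y$ and $X \mv Y \mv Z$, and then to combine the outputs with the hypotheses that $X$ is minimal and $P$ is $2$-tight.

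First, applying Lemma \ref{lem:alter:char}(1) to the alternating subpath $W \mv X \mv Y$ produces two distinguished ``slots'' of $X$: the slot preserved by $WX$ (call its content $\alpha_2$) and the slot preserved by $XY$ (call its content $\alpha_1$). Setting $C_1 = W \cap X \cap Y$, this presents the subpath as
\[W = \{\multi{\alpha_1}', \alpha_2\} \cup C_1, \quad X = \{\alpha_1, \alpha_2\} \cup C_1, \quad Y = \{\alpha_1, \multi{\alpha_2}'\} \cup C_1.\]
Minimality of $X$ then forces $\alpha_1$ and $\alpha_2$ to be single curves (not pairs): if, say, $\alpha_1$ were a pair of one-sided curves then $WX$ would be of type~$4$ with $|W| = |X| - 1$, contradicting $|W| \geq |X|$, and symmetrically for $\alpha_2$ via $XY$.

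Next, I would ask which ``slot'' of $Y$ is moved by $YZ$: it must be $\alpha_1$, (a curve of) $\multi{\alpha_2}'$, or a curve of $C_1$. The first possibility is exactly the form claimed by the lemma, so it remains to rule out the other two. The ``$\multi{\alpha_2}'$'' case is excluded by applying Lemma \ref{lem:alter:char}(1) a second time, to $X \mv Y \mv Z$: the lemma forces the slot of $Y$ shared with $Z$ (modulo the triple intersection $X \cap Y \cap Z$) to be \emph{different} from the slot of $Y$ shared with $X$, whereas a $YZ$ move touching only $\multi{\alpha_2}'$ would leave those two shared parts both equal to $\{\alpha_1\} \cup C_1$.

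The ``$C_1$'' case is where $2$-tightness becomes decisive. If $YZ$ replaced some $\gamma \in C_1$ by a new multicurve, then $C := W \cap X \cap Y \cap Z$ would reduce to $C_1 \setminus \{\gamma\}$, so $|C| = |X| - 3$; picking any $X_0 \in P^0$ realising the $2$-tightness bound $|C| \geq |X_0| - 2$ would then yield $|X_0| \leq |X| - 1$, contradicting the minimality $|X_0| \geq |X|$. Only the ``$\alpha_1$'' case survives, giving $Z = \{\multi{\alpha_1}'', \multi{\alpha_2}'\} \cup C_1$ with $\multi{\alpha_1}''$ denoting the replacement for $\alpha_1$ under $YZ$, which is precisely \eqref{eq:2tight_alter}. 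The main obstacle is this third, ``$C_1$'' scenario --- the pathological configuration in which the three edges of $P$ cycle through three distinct slots rather than alternating between two --- and it is neither ruled out by Lemma \ref{lem:alter:char}(1) nor by minimality alone; it is precisely the interplay of $2$-tightness with minimality that pins $P$ down to the two-slot form.
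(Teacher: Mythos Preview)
Your argument is correct and follows essentially the same approach as the paper's proof: both apply Lemma~\ref{lem:alter:char}(1) to $W\mv X\mv Y$ and use minimality of $X$ to reduce $\alpha_1,\alpha_2$ to single curves, then combine $2$-tightness (to force $C_1\subset Z$) with a second application of Lemma~\ref{lem:alter:char}(1) (to force $\multi{\alpha_2}'\subset Z$), concluding that $YZ$ must move $\alpha_1$. The only difference is cosmetic ordering --- the paper invokes $2$-tightness first and then Lemma~\ref{lem:alter:char}(1), whereas you organise it as a three-case analysis --- but the logical content is identical.
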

\begin{proof}
 By Lemma \ref{lem:alter:char}, the path $W\mv X\mv Y$ is of the form 
 \[\{\multi{\alpha_1}',\alpha_2\}\mv\{\alpha_1,\alpha_2\}\mv\{\alpha_1,\multi{\alpha_2}'\},\]
 where $\alpha_1,\alpha_2$ are curves (because $X$ is minimal). Since $P$ is 2-tight  we have 
 \[X\setminus\{\alpha_1,\alpha_2\}=Y\setminus\{\alpha_1,\multi{\alpha_2}'\}\subset Z,\] and by applying Lemma \ref{lem:alter:char} to the path  $X\mv Y\mv Z$ we obtain that it is of the form
 \[\{\alpha_1,\alpha_2\}\mv\{\alpha_1,\multi{\alpha_2}'\}\mv\{\multi{\alpha_1}'',\multi{\alpha_2}'\}.\]
\end{proof}

\begin{lemma}\label{tight4gon}
Every quadrangle is 2-tight.
\end{lemma}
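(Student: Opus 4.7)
The plan is to choose $X_0$ to be a minimal vertex of $\Gamma$ (one of smallest cardinality) and to show that at most two curves of $X_0$ fail to lie in every $X_i$; this is exactly the 2-tightness bound $|\bigcap X_i| \ge |X_0|-2$. The key topological principle I will use repeatedly is this: if a curve $\gamma\in X_0$ is missing from some $X_j$ of the cycle, then, traversing $\Gamma$ from $X_0$, $\gamma$ is removed at some edge and later re-added at another. At any such re-adding edge, the curve(s) swapped out have positive geometric intersection number with $\gamma$ (by the definition of an elementary move), so they cannot belong to $X_0$, because all curves of the pants decomposition $X_0$ are pairwise disjoint. Thus every re-addition of an $X_0$-curve is ``paid for'' by the removal of a \emph{fresh} curve, i.e. a curve not in $X_0$.

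Since type 4 edges change cardinality, they must come in forward/backward pairs on a cycle, so the number of type 4 edges in $\Gamma$ is $0$, $2$ or $4$, and I case-split accordingly. In the no type 4 case, each of the four edges has exactly one exit and one entry. Writing $p$ for the number of non-persistent curves of $X_0$, each such curve contributes at least one $X_0$-exit; the intersection principle then forces an additional fresh exit at each re-adding edge. Since the total number of exits is only $4$, this gives $2p\le 4$, so $p\le 2$, as required.

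For the cases with type 4 edges I will sub-divide on how the forward/backward pair(s) are placed on the cycle (adjacent or opposite in the $2$-edge case; and the patterns FBFB, FFBB, FBBF in the $4$-edge case) and compute $\bigcap X_i$ directly. The main tool is the inclusion-exclusion bound $|A\cap B|\ge |A|+|B|-|X_0|$ applied to $A=X_0\cap X_1\cap X_2$ and $B=X_0\cap X_3$, together with the tight bounds on $|X_i\cap X_j|$ for adjacent $i,j$ and the constraint $X_4=X_0$. The intersection principle from the first paragraph is applied again inside each sub-case to eliminate, or control, the configurations where many $X_0$-curves would have to be escaped and re-added.

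I expect the main obstacle to be exactly these type 4 sub-cases: a type 4 forward edge can re-add up to two $X_0$-curves simultaneously while removing only one curve, and a type 4 backward edge can exit two $X_0$-curves while adding only one, so the neat counting used in Case $1$ is no longer tight. The bookkeeping must therefore be done case by case, verifying that the closing condition $X_4=X_0$ combined with the disjointness of curves within $X_0$ ultimately rules out every configuration with three or more non-persistent curves of $X_0$.
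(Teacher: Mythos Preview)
Your counting argument for the case without type~4 edges is correct and rather clean: the intersection principle ensures that each re-entry of an $X_0$-curve occurs at an edge whose exit is \emph{not} an $X_0$-curve, so out-edges and in-edges are disjoint and $2p\le 4$. But the type~4 cases --- which are precisely what is new in the nonorientable setting --- remain only a plan. You announce a sub-division on the placement of the forward/backward edges and an inclusion--exclusion bound, yet carry out none of the bookkeeping; you even flag yourself that the neat counting ``is no longer tight'' once type~4 moves enter. Since a forward type~4 edge can restore two $X_0$-curves at the cost of a single exit, and a backward one can eject two $X_0$-curves while adding only one, the balance really does break, and nothing in the proposal actually verifies that the closing condition $X_4=X_0$ still forces $p\le 2$. (Incidentally, in your list of four-edge patterns, FBBF is impossible from a minimal $X_0$, as it would give $|X_3|=|X_0|-1$; only FFBB and FBFB occur.)

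The paper's proof sidesteps this enumeration entirely. It fixes a minimal vertex $X$ in the quadrangle $WXYZ$, names the curve $\alpha_1$ removed by the edge $XY$, and splits only on whether $\alpha_1\in W$. In each branch it identifies the (at most two) curves of $X$ that can fail to lie in $W$ and $Y$, and then argues that if a third curve $\alpha_3\in X$ were also absent from $Z$, the edges $YZ$ and $ZW$ would both be forced to be type~4 moves sharing a common replacement curve $\beta\in Z$; comparing the two resulting descriptions of $Z$ yields a contradiction ($\alpha_1'=\alpha_2'$ in one case, $W=Y$ in the other). All type~4 configurations are thus dispatched at once, in a few lines, with no pattern-by-pattern analysis.
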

\begin{proof}
    Let $\Circ$ be a quadrangle $WXYZ$
     and assume that $X$ is minimal in $\Circ$. We assume $|X|\ge 3$, for otherwise $\Circ$ is obviously 2-tight.
     Let $\fullMv{XY}{\alpha_1}{\mv}{\multi{\alpha_1}'}$. 
     
     If $\alpha_1\in W$, then the path $W\mv X\mv Y$ is  of the form
     \[\{\alpha_1,\multi{\alpha_2}',\alpha_3\}\mv \{\alpha_1,\alpha_2,\alpha_3\}\mv \{\multi{\alpha_1}',\alpha_2,\alpha_3\}.\]
Note that $\alpha_1\notin Z$ and $\alpha_2\notin Z$, for otherwise $Z=X$. If $\Circ$ is not 2-tight, then there is $\alpha_3\in X\setminus\{\alpha_1,\alpha_2\}$ such that $\alpha_3\notin Z$. Then $ZW$ and $YZ$ must be  edges of type 4: 
\[\fullMv{ZW}{\beta}{\mvIVL}{\{\alpha_1,\alpha_3\}},\quad \fullMv{YZ}{\{\alpha_2,\alpha_3\}}{\mvIV}{\beta}\]
 for some $\beta\in Z$. But then $\multi{\alpha_1}'=\alpha_1'$ and $\multi{\alpha_2}'=\alpha_2'$ are curves and
\[Z=\{\alpha'_2,\beta,\dots\}=\{\alpha'_1,\beta,\dots\},\]
which gives $\alpha'_1=\alpha'_2$. This is a contradiction, as $i(\alpha'_1,\alpha_2)=0<i(\alpha'_2,\alpha_2)$.

Now suppose $\alpha_1\notin W$, so the path \[W\mv X\mv Y\] is  of the form:
 \[\{\multi{\alpha_1}'',\alpha_2,\alpha_3\}\mv \{\alpha_1,\alpha_2,\alpha_3\}\mv \{\multi{\alpha_1}',\alpha_2,\alpha_3\}.\]
 If $\Circ$ is not 2-tight, then there are $\alpha_2,\alpha_3\in X\setminus\{\alpha_1\}$ such that $\alpha_2,\alpha_3\notin Z$. 
 Then $ZW$ and $YZ$ must be  edges of type 4: 
\[\fullMv{ZW}{\beta}{\mvIVL}{\{\alpha_2,\alpha_3\}},\quad \fullMv{YZ}{\{\alpha_2,\alpha_3\}}{\mvIV}{\beta}\]
 for some $\beta\in Z$. But then
\[Z=\{\multi{\alpha_1}'',\beta,\dots\}=\{\multi{\alpha_1}',\beta,\dots\},\]
which gives $\multi{\alpha_1}'=\multi{\alpha_1}''$ and $W=Y$, a contradiction.
\end{proof}
\begin{example}\label{ex:5gon}
If $g\geqslant 4$, then there are pentagons in $\pants(N_g)$ that are not 2-tight.
For example, consider the pentagon of the form 
\[\xymatrix@R=0.5pc{
&\{\beta_1,\beta_2,\gamma_1,\gamma_2\}&\{\beta_1,\beta_2,\gamma\}\ar[l]_(.45)4\ar@{-}[dd]^3\\
\{\beta,\gamma_1,\gamma_2\}\ar[ur]^(.45)4\ar[dr]_(.45)4\\
&\{\beta_1,\beta'_2,\gamma_1,\gamma_2\}&\{\beta_1,\beta'_2,\gamma\}\ar[l]^(.45)4
} \]
where $\beta_1,\beta_2,\beta,\gamma_1,\gamma_2,\gamma$ are as in Figure \ref{figNon2Tight4}.
    \begin{figure}[h]
\begin{center}
\includegraphics[width=0.99\customwidth]{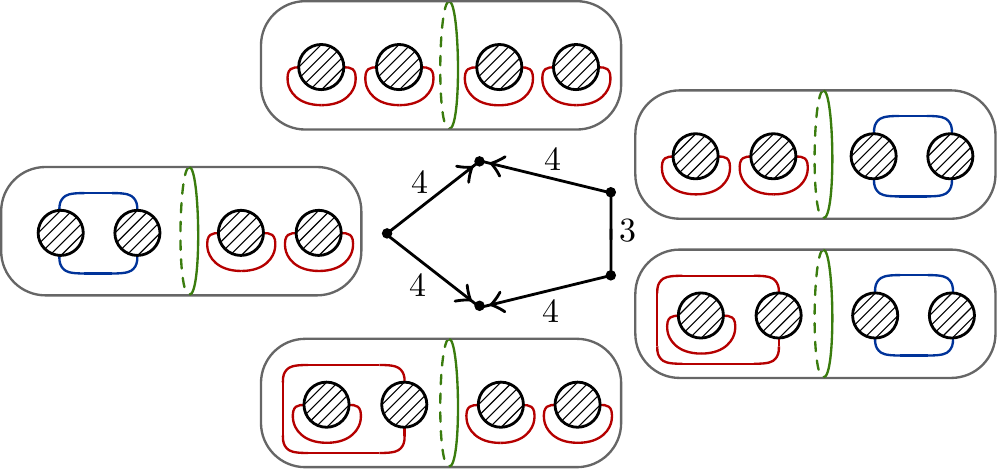} 
\caption{Pentagon in $\pants(N_4)$ that is not 2-tight.}\label{figNon2Tight4} %
\end{center}
\end{figure}
\end{example}

\begin{lemma}\label{two-tight}
Let $\Circ$ be circuit of length at most $6$ without edges of type 4. If $\Circ$ is not 2-tight, then it is an alternating hexagon with even number of edges of type 3.
\end{lemma}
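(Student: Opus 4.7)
My plan is to proceed by case analysis on the length $L\le 6$ of $\Circ$.  For $L=3$, the absence of type-4 edges forces the triangle to be a Farey triangle by Proposition \ref{lem:triangles}, so its three vertices share $n-1$ curves; the case $L=4$ is exactly Lemma \ref{tight4gon}.  Both give $|\bigcap X_i|\ge n-2$ and 2-tightness.

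For $L\in\{5,6\}$ all vertices have the same cardinality $n$ and each edge replaces exactly one curve.  I would decompose $\Circ$ into \emph{chunks}, i.e.\ maximal runs of consecutive edges contained in one common F-subgraph.  Since type-3 edges are thin edges by Lemma \ref{edge3:inFarey} and fan subgraphs arise only from type-4 edges, every chunk either lies in a Farey subgraph (of any length) or is a single type-3 edge.  Let $c$ be the number of chunks and let $C_j$ denote the common $(n-1)$-set of chunk $j$ (for thin-edge chunks this is provided by Remark \ref{rem:FAal:char}).  If $c=1$ the whole circuit lies in a single F-subgraph and $|\bigcap X_i|\ge n-1$; if $c=2$ the unique boundary vertex between the two chunks equals $C_1\cup C_2$ and has cardinality $n$, which forces $|C_1\cap C_2|=n-2$ and hence $|\bigcap X_i|\ge n-2$.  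In both cases $\Circ$ is 2-tight.

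For $c\ge 3$ I would use a multicurve--coexistence argument.  Set $S=\bigcap X_i$, $T=\bigcup X_i\setminus S$ (the \emph{moving} curves), and $k=n-|S|$, so every $X_i$ contains exactly $k$ curves from $T$.  Two moving curves that coexist in some $X_i$ are disjoint, so any coexistence clique $K\subseteq T$ together with $S$ is a multicurve on $N$ of size $|K|+|S|\le n$; hence the maximum clique $\omega$ in the coexistence graph satisfies $\omega\le k$, and since every $X_i$ exhibits a clique of size $k$ we get $\omega=k$ exactly.  For $L=5$ and $k\ge 3$, the five vertices form a 5-cycle of $k$-subsets in the Johnson graph $J(|T|,k)$; a short combinatorial check (based on the fact that every $3$-subset of a $5$-element ground set is contained in exactly two $4$-subsets of that ground set, with parallel statements for $k\in\{4,5\}$) shows that the coexistence graph must contain a $(k+1)$-clique, contradicting $\omega=k$.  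Hence pentagons are always 2-tight.  The same clique argument rules out $k\le 3$ and $k\ge 5$ for $L=6$; and a refinement of the $c=2$ analysis, applied to a pair of boundary vertices within any chunk of length $\ge 2$, shows that such chunks already force $|\bigcap X_i|\ge n-2$.  Therefore a non-2-tight hexagon must have $k=4$ and be fully alternating, i.e.\ $c=6$.

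Finally, for the parity of type-3 edges in such a hexagon: every type-3 edge swaps the two one-sided curves of its $N_{1,2}$ support (Proposition \ref{pantsN12}), whereas type-1 and type-2 edges fix every one-sided curve of the pants decomposition.  Hence the sequence $X_i^-$ evolves only by the type-3 swaps and forms a closed walk whose length equals the number $q$ of type-3 edges in $\Circ$.  A $\mathbb{Z}_2$-valued invariant of loops in $\pants(N)$---most naturally obtained by lifting $\Circ$ to the orientation double cover of $N$---would force $q$ to be even.  The main obstacle is this last step: the chunk-decomposition and coexistence-graph arguments are essentially combinatorial, but establishing the parity of the type-3 count requires a delicate topological invariant that tracks how the one-sided content of the pants decompositions transforms around the hexagon.
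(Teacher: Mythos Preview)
Your approach is far more elaborate than the paper's, and it leaves the decisive step unfinished.  The paper's argument is essentially a one-line reduction: since $\Circ$ has no edges of type~4, every edge replaces a single curve by a single curve, so the combinatorics is \emph{identical} to the orientable situation and the proof of Lemma~4 in \cite{Mar} applies verbatim.  That lemma yields more than ``alternating hexagon'': it gives the explicit shape
\[
\alpha_1\mv\alpha_1',\quad \alpha_2\mv\alpha_2',\quad \alpha_3\mv\alpha_3',\quad \alpha_1'\mv\alpha_1,\quad \alpha_2'\mv\alpha_2,\quad \alpha_3'\mv\alpha_3,
\]
so opposite edges perform the \emph{same} swap.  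From this, the parity of type-3 edges is immediate: edge $j$ is of type~3 if and only if $\alpha_{(j\bmod 3)+1}$ is one-sided, and each index occurs in exactly two edges, hence the number of type-3 edges equals $2\cdot\bigl|\{i:\alpha_i\text{ one-sided}\}\bigr|$.

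Your chunk/coexistence-graph route may eventually recover the ``alternating hexagon'' conclusion (though the Johnson-graph step for $L=5$ is asserted rather than carried out, and the ``refinement of the $c=2$ analysis'' ruling out $k=3$ for hexagons is not spelled out), but it never isolates the crucial opposite-edges-are-equal structure.  Lacking that, you are forced to invent a separate parity invariant, and the orientation-double-cover idea you sketch is both vague and unnecessary: a type-3 edge does not toggle any obvious $\mathbb{Z}_2$-quantity of the whole pants decomposition (for instance $|X^-|$ is unchanged), so it is unclear what your invariant would be.  The genuine gap is therefore the parity claim; the fix is not a new topological invariant but simply to extract the explicit hexagon form, after which the parity is a triviality.
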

\begin{proof}
    If $\Circ$ does not contain edges of type 4, and we assume that $\Circ$ is not 2-tight, then the proof of Lemma 4 in \cite{Mar} works verbatim and we conclude that $\Circ$ is an alternating hexagon with edges of the form
\begin{align*}
    \alpha_1 \mv \alpha_1',\ \alpha_2 \mv \alpha_2',\ \alpha_3 \mv \alpha_3',\\
    \alpha_1' \mv \alpha_1,\ \alpha_2' \mv \alpha_2,\ \alpha_3' \mv \alpha_3.
\end{align*}
    Hence, the number of edges of type 3 is even -- it is twice the number of  one-sided curves among $\{\alpha_1,\alpha_2,\alpha_3\}$. 
\end{proof}

\begin{defi}
Let $N\notin\{N_{2,1},N_3\}$ and suppose that $\fullMv{XY}{\alpha}{\mvIV}{\{\beta_1,\beta_2\}}$ is a move of type 4.
There is a unique $\gamma\in X\cap Y$ separating a one-holed Klein bottle containing $\alpha,\beta_1$ and $\beta_2$.
We call it \emph{the separating curve associated to $XY$}.
\end{defi}

\begin{rem}\label{rem:asc}
The separating curve associated to $\fullMv{XY}{\alpha}{\mvIV}{\{\beta_1,\beta_2\}}$ is isotopic to the boundary of a regular neighbourhood of $\alpha\cup\beta_1$. In particular, it is determined by $\alpha$ and $\beta_1$.
\end{rem}

\begin{lemma}\label{lemma:bdrpass_new}
Suppose $N\notin\{N_{2,1},N_3\}$ and $P$ is a path in $\pants(N)$ of the form \[X \mvIV Y\mv Z,\] where $YZ$ is of type 1 or 2. Then $Z$ contains the separating curve associated to $XY$ if and only if the path $P$ is contained in a quadrangle. 
\end{lemma}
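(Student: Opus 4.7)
For the implication $(\Leftarrow)$, assume $\gamma\in Z$ and write the move $YZ$ as $\delta\mv\delta'$. Since $YZ$ is of type $1$ or $2$, both $\delta$ and $\delta'$ are two-sided; since $\gamma\in Y\cap Z$ we have $\delta\neq\gamma$; and since $\beta_1,\beta_2$ are one-sided, $\delta\notin\{\beta_1,\beta_2\}$. Therefore $\delta\in Y\setminus\{\gamma,\beta_1,\beta_2\}=X\setminus\{\alpha,\gamma\}$, i.e.\ $\delta$ is a curve of $X$ lying outside the Klein bottle $K$ bounded by $\gamma$. Set $W:=(X\setminus\{\delta\})\cup\{\delta'\}$. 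The supports of $YZ$ and $XW$ coincide because $\gamma$ belongs to both $Y\setminus\{\delta\}$ and $X\setminus\{\delta\}$, so $\gamma$ is a cutting curve and the non-trivial component containing $\delta$ lies on the side of $\gamma$ opposite to $K$ -- the side where $X$ and $Y$ agree. Hence $XW$ is a legal move of the same type as $YZ$, and $WZ\colon\alpha\mvIV\{\beta_1,\beta_2\}$ is a legal move of type $4$; the cycle $XYZW$ is the desired quadrangle.

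For $(\Rightarrow)$, suppose $P$ is contained in a quadrangle $XYZW$ and, for contradiction, $\gamma\notin Z$. Then $YZ$ must replace $\gamma$ by some $\gamma'$, and by Lemma \ref{tight4gon} the quadrangle is $2$-tight. The key topological observation is that $\gamma$ is \emph{not} special in $X$: the non-trivial component of $N_{X\setminus\{\gamma\}}$ is built by gluing two orientable pants (one in $K$ with boundaries $\alpha_+,\alpha_-,\gamma$, one in $N\setminus K$ with $\gamma$ and two other boundaries) along the two-sided curve $\gamma$, yielding $S_{0,4}$ instead of $N_{2,1}$. The same computation shows that $\gamma$ and $\alpha$ remain non-special in any pants decomposition obtained from $X$ by a type $1$ or $2$ move that replaces $\gamma$. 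I then case-split on $|W|$. If $|W|=|X|+1$, then $XW$ is of type $4$ with a special $\alpha'\in X^+$; the observation rules out $\alpha'=\gamma$, while $\alpha'=\alpha$ is ruled out by a symmetric-difference count forcing $W=Y$. For other $\alpha'\in X\cap(N\setminus K)$, one checks that the set equation for $Z$ forces $\gamma\in Z$, a contradiction. If $|W|=|X|$, then $XW$ is of type $1/2/3$, say $\mu\mv\mu'$, and $WZ$ is of type $4$ with special $\alpha''\in W^+$. The subcase $\mu\neq\gamma$ forces $\alpha''=\gamma$ (since $\gamma\in W\setminus Z$), which is impossible by non-specialness. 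The subcase $\mu=\gamma$ is handled by writing $Z$ both as $(W\setminus\{\alpha''\})\cup\{\beta_1'',\beta_2''\}$ and $(Y\setminus\{\gamma\})\cup\{\gamma'\}$, then matching the one- and two-sided curves that appear.

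The hardest step will be the subcase $|W|=|X|$, $\mu=\gamma$, where the candidate $\alpha''$ can a priori be $\alpha$, $\mu'$, or a two-sided curve of $X$ on the $N\setminus K$ side of $\gamma$. Each candidate must be excluded either by the non-specialness computation reapplied to $W$ (showing the non-trivial component of $N_{W\setminus\{\alpha''\}}$ is again $S_{0,4}$) or by a combinatorial obstruction in the set equation (for instance, $\alpha$ cannot equal $\gamma'$ because $\gamma'$ lives in a support obtained by cutting along $\beta_1,\beta_2$, which destroys $\alpha$). This is exactly where the two-sidedness of $\gamma$ and the rigidity of the Klein-bottle configuration are essential; $2$-tightness alone does not suffice.
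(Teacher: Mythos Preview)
Your forward direction (constructing the quadrangle when $\gamma\in Z$) is correct and matches the paper's argument essentially verbatim. Note, though, that you have the labels $(\Leftarrow)$ and $(\Rightarrow)$ swapped relative to how the equivalence is stated.

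The reverse direction, however, is not a proof but an outline --- and you say so yourself: your final paragraph begins ``The hardest step \emph{will be} the subcase $|W|=|X|$, $\mu=\gamma$\ldots''\ and then lists what ``must be excluded'' without excluding it. Several of the sketched steps (``a symmetric-difference count forcing $W=Y$'', ``one checks that the set equation for $Z$ forces $\gamma\in Z$'', ``matching the one- and two-sided curves that appear'') are plausible but not carried out, and the subcase $\mu=\gamma$, $\alpha''=\alpha$ is genuinely delicate: matching sides gives $\mu'=\gamma'$ and $\{\beta_1'',\beta_2''\}=\{\beta_1,\beta_2\}$, and at that point you still need a reason why $WZ$ cannot be the move $\alpha\mvIV\{\beta_1,\beta_2\}$. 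Your ``non-specialness'' computation for $\gamma$ in $X$ does not transfer to $\alpha$ in $W$ without further work, and invoking $2$-tightness does not help here (as you yourself note).

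The paper bypasses the entire case analysis with one observation you are not using: Remark~\ref{rem:asc} says the separating curve of a type-4 move $\alpha\mvIV\{\beta_1,\beta_2\}$ is the boundary of a regular neighbourhood of $\alpha\cup\beta_1$, hence is determined by $\alpha$ and $\beta_1$ alone. So once one shows (via a short argument using $W\ne Y$ and the specialness of $\alpha$ in $X$) that $\alpha\in W$ and hence $ZW\colon\{\beta_1,\beta_2\}\mvIVL\alpha$, the separating curve of $ZW$ is automatically the \emph{same} curve $\gamma$, forcing $\gamma\in Z\cap W$ and giving the contradiction in one line. This is exactly the missing idea that would collapse your case tree; I recommend rewriting the reverse direction around it rather than trying to complete the combinatorial enumeration.
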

\begin{proof}
Let 
\[\fullMv{XY}{\alpha_1}{\mvIV}{\{\beta_1,\beta_2\}}\]  and let $\alpha_2$ be the associated separating curve. Since $YZ$ 
is of type 1 or 2, $Z$ contains $\beta_1$ and $\beta_2$. 

If $\alpha_2\in Z$ and 
\[\fullMv{YZ}{\alpha_3}{\mv}{\alpha_3'},\]
then by applying the move 
\[\{\beta_1,\beta_2\}\mvIVL \alpha_1\] to $Z$  we obtain a vertex 
\[W=\{\alpha_1,\alpha_2,\alpha'_3,\ldots \}\] such that $XYZW$ is a a quadrangle.

Conversely, suppose $XYZW$ is a quadrangle and $\alpha_2\notin Z$. We have
\[X=\{\alpha_1,\alpha_2,\dots\},\quad Y=\{\beta_1,\beta_2,\alpha_2,\dots\},\quad Z=\{\beta_1,\beta_2,\alpha'_2,\dots\},\]
for some $\alpha'_2$ such that $i(\alpha'_2,\alpha_2)=2$ ($YZ$ is of type 2 since $\alpha_2$ is separating).
Since $W\ne Y$, we have $\alpha_2\notin W$, and thus $\alpha_1\in W$ (because $\alpha_1\in X$). Thus  $ZW$ is a move o type 4 of the form
\[\fullMv{ZW}{\{\beta_1,\beta_2\}}{\mvIVL}{\alpha_1}.\]
and $W=\{\alpha_1,\alpha'_2,\dots\}$.
By Remark \ref{rem:asc}, the separating curves associated to $ZW$ and $XY$ are isotopic, so $\alpha_2\in W\cap Z$, which is a contradiction, as $i(\alpha'_2,\alpha_2)=2$. 
\end{proof}

\begin{defi}\label{def:7gon}
\emph{Standard heptagon} is a circuit of the form 
\[\xymatrix@R=0.1pc{
    &X\ar[r]^(.46)4&Y\ar@{-}[rd]\\
    &&&Z\ar@{-}[dd]^3\\
    S\ar@{-}[rdd]\ar@{-}[ruu]\\
    &&&P\ar@{-}[dl]\\
    &R\ar[r]_(.46)4&Q
    }\]
such that the path $X\mvIV Y\mv Z$ is not contained in a quadrangle.
\end{defi}
\begin{rem}\label{rem:hept:inv}
By Corollary \ref{cor:type_preserv}, automorphisms of $\pants(N)$ map standard heptagons to standard heptagons.
\end{rem}

The circuit in Figure \ref{fig7gon} is a standard heptagon. It is easy to check, using Lemma \ref{lemma:bdrpass_new}, that it satisfies the definition.
\begin{figure}[h]
\begin{center} 
\includegraphics[width=1\customwidth]{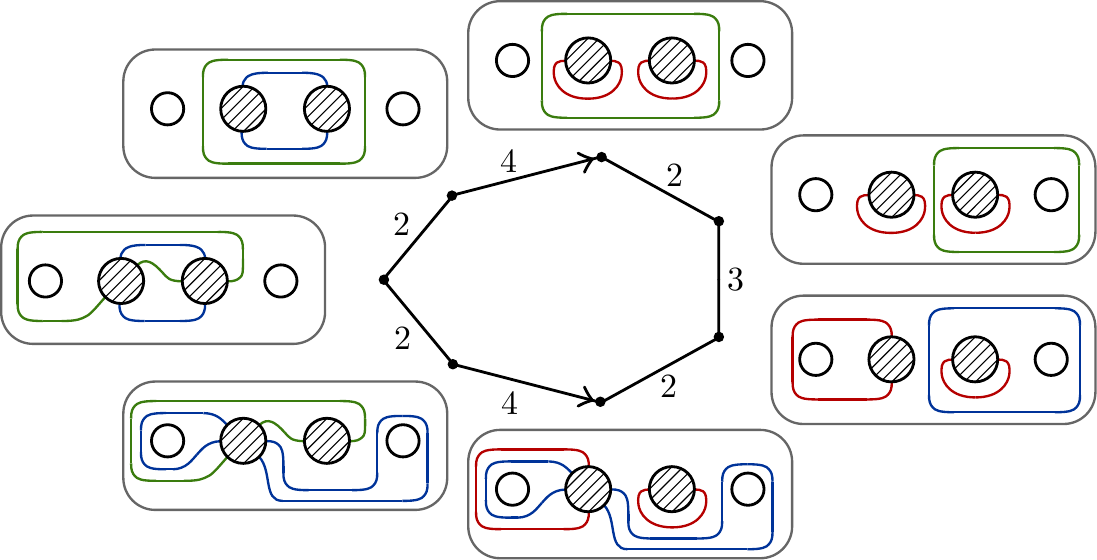}
\caption{Standard heptagon in $\pants(N_{2,2})$.} %
\label{fig7gon}
\end{center}
\end{figure}

\begin{lemma}\label{tight7gon_new}
Every standard heptagon is 2-tight and alternating.    
\end{lemma}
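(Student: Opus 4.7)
The plan is to pin down the explicit curve structure of the seven vertices of a standard heptagon, then read off both conclusions. I begin by setting up notation. Write
\[\fullMv{XY}{\alpha_1}{\mvIV}{\{\beta_1,\beta_2\}}, \qquad \fullMv{RQ}{\gamma_1}{\mvIV}{\{\delta_1,\delta_2\}},\]
and let $\alpha_2$, $\gamma_2$ be the associated separating curves. Setting $A = X \setminus \{\alpha_1,\alpha_2\}$ and $B = R \setminus \{\gamma_1,\gamma_2\}$, we have $X = \{\alpha_1,\alpha_2\} \cup A$, $Y = \{\beta_1,\beta_2,\alpha_2\} \cup A$, $R = \{\gamma_1,\gamma_2\} \cup B$, and $Q = \{\delta_1,\delta_2,\gamma_2\} \cup B$.

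Next I invoke Lemma \ref{lemma:bdrpass_new} to analyze $YZ$: since $X \mvIV Y \mv Z$ is not contained in a quadrangle, if $YZ$ is of type 1 or 2 then $\alpha_2 \notin Z$, forcing $YZ$ to replace $\alpha_2$ itself by some $\alpha'_2$ and giving $Z = \{\beta_1,\beta_2,\alpha'_2\} \cup A$. The alternative that $YZ$ is of type 3 or 4 is ruled out because either Proposition \ref{lem:triangles} would make $XYZ$ a triangle (contradicting the seven distinct vertices of the heptagon), or the cardinality pattern along the circuit becomes incompatible with the type 3 edge $ZP$. Applying the same analysis to the symmetric path $R \mvIV Q \mv P$—whose non-quadrangularity follows from that of $X \mvIV Y \mv Z$ together with the heptagonal cycle closure—I get $P = \{\delta_1,\delta_2,\gamma'_2\} \cup B$ for some $\gamma'_2$. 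Matching the two descriptions of the endpoints $Z$ and $P$ through the type 3 edge $Z \mv P$ (which changes exactly one one-sided curve and preserves the two-sided curve) yields $\alpha'_2 = \gamma'_2$, $A = B$, and that $\{\beta_1,\beta_2\}$ and $\{\delta_1,\delta_2\}$ differ in exactly one element. Remark \ref{rem:asc} and the rigidity in Lemma \ref{lem:Fsubgraph} then force $\alpha_1 = \gamma_1$ and $\alpha_2 = \gamma_2$, so the remaining vertex $S$ is determined by the edges $SX$ and $SR$ as a vertex of the fan $F(X,\alpha_1)$ adjacent to both $X$ and $R$.

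With the configuration in hand, every vertex of the heptagon contains the multicurve $A$, which has size $|X| - 2$; taking $X_0 = X$ gives 2-tightness via Definition \ref{def:two:tight}. For the alternating property, I check the seven consecutive pairs of edges using Lemma \ref{lem:alter:char}(2) and the uniqueness of marked F-graphs from Lemma \ref{lem:Fsubgraph}: at $Y$, the edges $YX$ and $YZ$ lie in $F(X,\alpha_1)$ (a fan) and $F(Y,\alpha_2)$ (a Farey), which are distinct F-graphs since $\alpha_1 \ne \alpha_2$; at $Z$, $ZY$ lies in the Farey $F(Y,\alpha_2)$ while $ZP$ is a thin edge by Lemma \ref{edge3:inFarey}; symmetric verifications handle $P$, $Q$, $R$, and $S$, noting in particular that the edges $SX$ and $XY$ at $X$ cannot both lie in $F(X,\alpha_1)$ because doing so would place $S$ as a vertex of that fan in a way that extends to a triangle of the fan, contradicting the seven distinct vertices of the heptagon. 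The main obstacle is the case analysis in the second paragraph: ruling out $YZ$ (and symmetrically $QP$) being of type 3 or 4 while still admitting a heptagonal closure, and carefully matching the two descriptions of $P$ without inadvertently collapsing the seven vertices of the circuit.
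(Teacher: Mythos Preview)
Your proof contains a fatal error: the claim that ``Remark \ref{rem:asc} and the rigidity in Lemma \ref{lem:Fsubgraph} then force $\alpha_1 = \gamma_1$ and $\alpha_2 = \gamma_2$'' is precisely the opposite of what is true. In the paper's analysis (with $\gamma_1=\alpha'_1$, $\gamma_2=\alpha'_2$), one shows that $\alpha_1=\gamma_1$ would imply $\alpha_2=\gamma_2$ via Remark \ref{rem:asc}, and then that $PQ$ must be $\alpha''_2\mvII\alpha_2$, so that $Y$ and $Q$ differ only by $\beta_2\leftrightarrow\beta'_2$; applying the inverse type-4 move gives $X=R$, contradicting that the heptagon is an embedded circuit. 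Thus $\alpha_1\ne\gamma_1$ and $\alpha_2\ne\gamma_2$. Neither Remark \ref{rem:asc} nor Lemma \ref{lem:Fsubgraph} gives any argument in the direction you assert: the two Klein bottles bounded by $\alpha_2$ and by $\gamma_2$ are different subsurfaces (they share the curve $\beta_1$ but are not equal), so their unique two-sided core curves $\alpha_1$ and $\gamma_1$ have no reason to coincide.

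This error cascades: your description of $S$ as ``a vertex of the fan $F(X,\alpha_1)$ adjacent to both $X$ and $R$'' is wrong, since in fact $S\supseteq\{\alpha_1,\alpha'_1\}$ with $SX\colon\alpha'_1\mvII\alpha_2$ and $RS\colon\alpha'_2\mvII\alpha_1$, so $S\notin F(X,\alpha_1)$. The correct determination of $S$ uses Lemma \ref{lem:special}: both $\alpha_1$ and $\gamma_1=\alpha'_1$ are special (in $X$ and $R$ respectively), so both must survive into $S$. A secondary problem is your appeal to symmetry: you assert that $R\mvIV Q\mv P$ is not contained in a quadrangle ``together with the heptagonal cycle closure,'' but this is in fact a \emph{corollary} of the lemma in the paper, deduced only after the full structure \eqref{std:hep:proof} has been established---it is not available as an input to the analysis of $P$ and $Q$.
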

\begin{proof} 
Let $\Circ$ be a standard heptagon as in Definition \ref{def:7gon} and let
\[\fullMv{XY}{\alpha_1}{\mvIV}{\{\beta_1,\beta_2\}}\]
 Since $YZ$ is of type 1 or 2, $Y$ contains at least one two-sided curve, which implies $N\notin\{N_{2,1},N_3\}$. Let $\alpha_2\in X\cap Y$ be the separating curve associated to $XY$. We will show that $X\setminus\{\alpha_1,\alpha_2\}$ is contained in every vertex of $\Circ$.

    By definition of standard heptagon and Lemma \ref{lemma:bdrpass_new}, we have $\alpha_2\notin Z$. Moreover, $\alpha_2$ is separating, so 
    \[\fullMv{YZ}{\alpha_2}{\mvII}{\alpha''_2}\]
     for some $\alpha''_2\in Z$ such that $i(\alpha_2,\alpha''_2)=2$.

    By looking at one-sided curves we see that 
\begin{align*}
Z^-&=Y^-=X^-\cup\{\beta_1,\beta_2\}=S^-\cup\{\beta_1,\beta_2\},\\
P^-&=Q^-=R^-\cup\{\beta'_1,\beta'_2\}=S^-\cup\{\beta'_1,\beta'_2\}.
    \end{align*}
    for some $\beta'_1,\beta'_2\in P^-$. But $ZP$ is an edge of type 3, so we may assume that $\beta'_1=\beta_1$ and
\begin{align*}
&\fullMv{ZP}{\beta_2}{\mvIII}{\beta'_2},\\
&\fullMv{QR}{\{\beta_1,\beta'_2\}}{\mvIVL}{\alpha_1'}
    \end{align*}
    for some $\beta'_2\in P^-$ such that $i(\beta_2,\beta'_2)=1$ and $\alpha'_1\in R^+$.
    
     Let $\alpha'_2\in Q\cap R$ be the separating curve associated to $QR$. By Remark \ref{rem:asc}, $\alpha'_2$ (respectively $\alpha_2$) is the boundary of a regular neighbourhood of $\alpha'_1\cup\beta_1$ (respectively $\alpha_1\cup\beta_1$). It follows that \[\alpha'_2=\alpha_2\iff \alpha'_1=\alpha_1.\]
    
    Suppose that $\alpha'_1=\alpha_1$. Then $\alpha_2=\alpha'_2\in R\cap Q$.  We have $\alpha''_2\in P$ and $i(\alpha''_2,\alpha_2)=2$, so $\alpha_2\notin P$ and $\alpha''_2\notin Q$. It follows that 
    \[\fullMv{PQ}{\alpha''_2}{\mvII}{\alpha_2}\]
     We see that $Y$ and $Q$ differ by the move $\beta_2\mv\beta'_2$, hence $X=R$, a contradiction (with the definition of a circuit).

    So $\alpha'_1\ne \alpha_1$ and $\alpha'_2\ne \alpha_2$. Then $\alpha'_1\notin X$ because all curves in $X\backslash\{\alpha_1\}$ are disjoint from $\beta_1$. Analogously 
    $\alpha_1\notin R$.   
    We have $i(\alpha_2,\alpha'_2)>0$ because $\alpha_2$ and $\alpha'_2$ separate one-holed Klein bottles which are not disjoint, as they both contain $\beta_1$. So $\alpha'_2\notin X$ and $\alpha_2\notin R$.
    By Lemma~\ref{lem:special}, we have $\alpha_1,\alpha'_1\in S$ so  
\begin{align*}
&\fullMv{SX}{\alpha'_1}{\mvII}{\alpha_2}\\
&\fullMv{RS}{\alpha'_2}{\mvII}{\alpha_1}.
\end{align*}
    Summarizing, $\Circ$ has the following form 
\begin{equation}\label{std:hep:proof}
    \xymatrix@R=0.5pc{
    &X\supseteq\{\alpha_1,\alpha_2\}\ar[r]^(.46)4&\{\beta_1,\beta_2,\alpha_2\}\subseteq Y\ar@{-}[rd]^2\\
    &&&\{\beta_1,\beta_2,\alpha''_2\}\subseteq Z\ar@{-}[dd]^3\\
    S\supseteq \{\alpha'_1,\alpha_1\}\ar@{-}[rdd]_2\ar@{-}[ruu]^2\\
    &&&\{\beta_1,\beta'_2,\alpha''_2\}\subseteq P\ar@{-}[dl]^2\\
    &R\supseteq\{\alpha'_1,\alpha'_2\}\ar[r]_(.46)4&\{\beta_1,\beta'_2,\alpha'_2\}\subseteq Q
    }
\end{equation}
    
In particular, $\Circ$ is 2-tight and alternating.
\end{proof}
As an immediate consequence of the explicit form \eqref{std:hep:proof} of the standard heptagon obtained in the above proof, we get the following corollaries.
\begin{cor}
If 
\[R\mv S\mv X\mvIV Y\mv Z\mvIII P\mv Q \mvIVL R\]
is a standard heptagon as in Definition \ref{def:7gon}, then the path 
\[P\mv Q \mvIVL R\]
is not contained in a quadrangle.
\end{cor}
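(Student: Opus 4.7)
The plan is to exploit the symmetry of the explicit form \eqref{std:hep:proof} of the standard heptagon derived in the proof of Lemma \ref{tight7gon_new}, together with Lemma \ref{lemma:bdrpass_new} applied to the reversed path $R \mvIV Q \mv P$. Since the statement "$P\mv Q \mvIVL R$ is not contained in a quadrangle'' is equivalent to "$R\mvIV Q \mv P$ is not contained in a quadrangle'', it suffices to verify the hypotheses of Lemma \ref{lemma:bdrpass_new} for the latter and to compute the separating curve associated to the type-$4$ move $R \mvIV Q$.

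First I would recall that, since $\Circ$ is a standard heptagon, the proof of Lemma \ref{tight7gon_new} establishes the explicit form \eqref{std:hep:proof}, and in particular forces $N\notin\{N_{2,1},N_3\}$, so Lemma \ref{lemma:bdrpass_new} is applicable. From \eqref{std:hep:proof}, the edge $RQ$ is the type-$4$ move
\[
\fullMv{RQ}{\alpha'_1}{\mvIV}{\{\beta_1,\beta'_2\}},
\]
so by Remark \ref{rem:asc} the separating curve associated to $RQ$ is the boundary of a regular neighbourhood of $\alpha'_1\cup\beta_1$, which by definition is exactly $\alpha'_2$.

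Next I would read off from \eqref{std:hep:proof} that $Q \supseteq\{\beta_1,\beta'_2,\alpha'_2\}$ and $P\supseteq \{\beta_1,\beta'_2,\alpha''_2\}$, while $PQ$ is of type $2$; hence $PQ$ is the move replacing $\alpha''_2$ by $\alpha'_2$, so in particular $\alpha'_2\notin P$. Applying Lemma \ref{lemma:bdrpass_new} to the path $R\mvIV Q \mv P$ (with the type-$2$ edge $QP$ and the separating curve $\alpha'_2$ associated to $RQ$), the condition $\alpha'_2\notin P$ yields that this path is not contained in a quadrangle, which is the claim.

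I do not foresee any substantial obstacle: the whole argument is a direct application of Lemma \ref{lemma:bdrpass_new} to the half of the heptagon opposite to the one built into the definition, and the explicit form \eqref{std:hep:proof} makes all the relevant curves transparent. The only point requiring care is verifying that $\alpha'_2$ is indeed the separating curve for $RQ$, which follows from Remark \ref{rem:asc} and the symmetry between $(X,Y,\alpha_1,\alpha_2)$ and $(R,Q,\alpha'_1,\alpha'_2)$ already exhibited in the proof of Lemma \ref{tight7gon_new}.
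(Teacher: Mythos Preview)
Your proposal is correct and follows exactly the approach the paper has in mind: the corollary is stated as an immediate consequence of the explicit form \eqref{std:hep:proof}, and you have simply spelled out the details of that deduction via Lemma \ref{lemma:bdrpass_new}. In particular, the identification of $\alpha'_2$ as the separating curve associated to $RQ$ is already recorded in the proof of Lemma \ref{tight7gon_new}, so your argument is a faithful unpacking of the paper's one-line justification.
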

\begin{cor}\label{std:hept:tight:subpath}
    If 
\[\Circ:\, R\mv S\mv X\mvIV Y\mv Z\mvIII P\mv Q \mvIVL R\]
is a standard heptagon, then any subpath
\[P_0\mv P_1\mv P_2\mv P_3\]
of $\Circ$ is 2-tight.
\end{cor}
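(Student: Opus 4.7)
The plan is to exploit the explicit description of the standard heptagon displayed in \eqref{std:hep:proof} in the proof of Lemma \ref{tight7gon_new}. Among the seven vertices $R, S, X, Y, Z, P, Q$, exactly the three vertices $X, R, S$ have minimal cardinality (call it $n$), while $Y, Z, P, Q$ all have cardinality $n+1$. This is immediate from the fact that the only edges of type 4 in $\Circ$ are $XY$ and $RQ$, both directed toward the vertex of larger cardinality, while all other edges preserve cardinality.

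The proof of Lemma \ref{tight7gon_new} already establishes that the set $I = X \setminus \{\alpha_1, \alpha_2\}$, of cardinality $n-2$, is contained in every vertex of $\Circ$. Now, any subpath of length 3 consists of four consecutive vertices, and a direct check in the cyclic order $R, S, X, Y, Z, P, Q$ reveals that the only quadruple of consecutive vertices avoiding all of $\{X, R, S\}$ is $\{Y, Z, P, Q\}$.

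For any subpath that contains a minimal vertex $V_{\min} \in \{X, R, S\}$, we have
\[|V_{\min}| - 2 = n - 2 \leq |I| \leq \left|\bigcap_i P_i\right|,\]
so such a subpath is 2-tight by Definition \ref{def:two:tight}. For the remaining subpath $Y \mv Z \mv P \mv Q$, one reads off from \eqref{std:hep:proof} that $\beta_1$ belongs to each of $Y, Z, P, Q$, while $\beta_1 \notin X$ (since $XY$ introduces $\beta_1$). Hence the intersection of these four vertices has cardinality at least $|I| + 1 = n - 1 = |Y| - 2$, so this subpath is also 2-tight.

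There is no genuine obstacle here: the corollary is essentially a bookkeeping exercise on the explicit diagram \eqref{std:hep:proof}, the only point requiring a moment's thought being the identification of $\{Y, Z, P, Q\}$ as the unique subpath of length 3 that contains no minimal vertex, which is then handled by the observation that $\beta_1$ is shared by all four of its vertices.
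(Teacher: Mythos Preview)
Your proof is correct and follows exactly the approach the paper intends: the corollary is stated as an immediate consequence of the explicit form \eqref{std:hep:proof}, and you have simply written out that verification in detail. The only subpath not containing a vertex of minimal cardinality is $Y\mv Z\mv P\mv Q$, and there the common curve $\beta_1$ (together with $I=X\setminus\{\alpha_1,\alpha_2\}$) gives the required bound.
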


\begin{example}
If the genus of $N$ is at least $6$, then $\pants(N)$ contains non-standard heptagons of the form
\[
    \xymatrix@R=0.5pc@C=1.2pc{
    &P_0\supseteq\{\alpha_1,\alpha_2,\alpha_3\}\ar[r]^(.46)4&\{\beta_1,\beta_2,\alpha_2,\alpha_3\}\subseteq P_1\ar@{-}[rd]\\
    &&&\{\beta_1,\beta_2,\alpha'_2,\alpha_3\}\subseteq P_2\ar@{-}[dd]^3\\
    P_6\supseteq \{\alpha_1,\alpha_2,\alpha'_3\}\ar@{-}[rdd]\ar@{-}[ruu]\\
    &&&\{\beta'_1,\beta_2,\alpha'_2,\alpha_3\}\subseteq P_3\ar@{-}[dl]\\
    &P_5\supseteq\{\alpha_1,\alpha'_2,\alpha'_3\}\ar[r]_(.46)4&\{\beta'_1,\beta_2,\alpha'_2,\alpha'_3\}\subseteq P_4
    }
\]
 which are not 2-tight. 

Note that if $Q=\{\alpha_1,\alpha'_2,\alpha_3,\dots\}$, then $Q$ is connected by edges to $P_0$, $P_2$, $P_3$ and $P_5$. In particular, the path $P_0\mvIV P_1\mv P_2$ is contained in a quadrangle $P_0P_1P_2Q$, so this heptagon is not standard.
\end{example}

\section{Tame circuits}\label{sec:tame:circ}
\begin{defi}\label{def:tame:circ} A circuit $\Circ$ of length $n$ is \emph{tame} if $4\le n\le 7$ and
\begin{enumerate}
    \item if $n\in\{5,6\}$, then $\Circ$ contains no edge of type 4;
    \item if $n=6$ and $\Circ$ is alternating, then $\Circ$ contains odd number of edges of type 3;
    \item if $n=7$, then $\Circ$ is a standard heptagon (see Definition \ref{def:7gon}).
\end{enumerate}
\end{defi}

\begin{rem}\label{rem:tame_inv}
    Every tame circuit $\Circ$ is 2-tight by Lemmas \ref{tight4gon}, \ref{two-tight} and \ref{tight7gon_new}. Furthermore, by Corollary \ref{cor:type_preserv} and Remark \ref{rem:hept:inv}, for every 
    $A\in\Aut(\pants(N))$, $A(\Circ)$ is tame, and hence also 2-tight.
\end{rem}

\begin{rem}\label{alter_in _tame}
Suppose that $P$ is an alternating  path
\[W\mv X\mv Y\mv Z,\]  contained in a tame circuit $\Circ$ and assume that $X$ is minimal in $P$.
\begin{enumerate}
    \item If $\Circ$ is of length at most 6, then $X$ is also minimal in $\Circ$, hence $P$ is 2-tight (because $\Circ$ is 2-tight -- see Remark \ref{rem:tame_inv}).
    \item If $\Circ$ is a standard heptagon, then $P$ is 2-tight by Remark \ref{std:hept:tight:subpath}.
\end{enumerate}
In both cases $P$ satisfies the assumptions of Lemma \ref{alter_tight_path}, so it is 
of the form \eqref{eq:2tight_alter}.
\end{rem}

\begin{prop}\label{prop:edge-circuit-new}
    Let $N\ne N_3$ and let 
    \[X=\{\alpha_1,\alpha_2,\alpha_3,\ldots,\alpha_n\}\mathdash \{\alpha_1,\multi{\alpha_2}',\alpha_3,\ldots,\alpha_n\}=X'\]
    be an edge in $\pants(N)$ such that:
    \begin{enumerate}
        \item $|X|\le |X'|$ (i.e. if this is an edge of type 4, then $\alpha_2$ is two-sided and $\multi{\alpha_2}'$ is a pair of disjoint one-sided curves);
        \item if $M$ is the union of non-trivial components of $N_{X\setminus\{\alpha_1,\alpha_2\}}$ then
    $M\ne N_{2,1}$, and if $M\in\{S_{1,2},N_{2,2}\}$ then at least one of the curves $\alpha_1,\alpha_2,\multi{\alpha_2}'$ is separating in $M$.
    \end{enumerate}
  Then the edge $XX'$ is contained in  a tame circuit $\Circ$ 
  \[V \mathdash X\mathdash X' \mathdash W\mathdash\ldots \mathdash V,\]
    such that 
    \begin{enumerate}
        \item the path 
        \(V \mathdash X\mathdash X' \mathdash W\)
         is alternating;
        \item $\alpha_1\not\in V\cup W$;
        \item $X$ is minimal in the path \(V \mathdash X\mathdash X' \mathdash W\).
        \end{enumerate}
\end{prop}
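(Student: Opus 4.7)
The approach is to split the argument by the topology of $M$, the union of the non-trivial components of $N_{X \setminus \{\alpha_1, \alpha_2\}}$. In each case I would construct the vertices $V$ and $W$ by applying an elementary move that modifies $\alpha_1$ to $X$ and $X'$ respectively, so that $\alpha_1 \notin V \cup W$, and then close the circuit using at most two additional vertices, producing a tame circuit of length $4$, $5$, $6$, or $7$.

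First I would handle the disconnected case, in which $M$ has two non-trivial components $M_1 \ni \alpha_1$ and $M_2 \ni \alpha_2$, each one of $S_{0,4}$, $S_{1,1}$, $N_{1,2}$, $N_{2,1}$. I would choose any elementary move $\alpha_1 \mapsto \underline{\alpha_1}'$ supported in $M_1$. Because the support of this move is disjoint from the support of $XX'$, the same move applies verbatim to both $X$ and $X'$, producing $V$ adjacent to $X$ and $W$ adjacent to $X'$, with $VW$ an edge. The resulting quadrangle $V \mathdash X \mathdash X' \mathdash W$ is tame by Lemma \ref{tight4gon}. Alternation of the path $V \mathdash X \mathdash X' \mathdash W$ holds because $F(VX) = F(X,\alpha_1) \ne F(X,\alpha_2) = F(XX')$, and no Farey or fan subgraph of $\pants(N)$ can contain both edges, since such a subgraph would require a common $N_{2,1}$ support inside the disjoint union $M_1 \sqcup M_2$, which is impossible by Euler characteristic. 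Condition (2) is immediate from the construction, and minimality of $X$ follows from $|V| \ge |X|$ and $|W| \ge |X'| \ge |X|$, using that moves of types $1$, $2$, $3$ preserve cardinality and type $4$ moves increase it.

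For the connected case, $\{\alpha_1, \alpha_2\}$ is a pants decomposition of $M$, and by the hypothesis $M \ne N_{2,1}$ together with an enumeration of connected surfaces admitting a $2$-curve pants decomposition, $M$ belongs to $\{S_{0,5}, S_{1,2}, N_{1,3}, N_{2,2}\}$. Each subcase is handled by an explicit construction inside $\pants(M) \subset \pants(N)$: for $M = S_{0,5}$, the classical Pachner-type pentagon in $\pants(S_{0,5})$ provides the circuit; for $M = S_{1,2}$ or $M = N_{1,3}$, one builds an alternating pentagon or the tame alternating hexagon of Figure \ref{fig:6gon}; and for $M = N_{2,2}$ with $XX'$ of type $4$, the standard heptagon of Figure \ref{fig7gon} serves. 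The separating-curve hypothesis is used precisely in the $S_{1,2}$ and $N_{2,2}$ subcases to pick a separating curve of $M$ that serves as the shared curve of the auxiliary vertices and allows the circuit to close consistently in an alternating way.

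Verification of properties (1)--(3) and tameness is in each case a direct check from the explicit model, invoking Lemma \ref{lem:alter:char} for alternation, Lemma \ref{lem:minimal} together with cardinality bookkeeping for minimality, and Lemmas \ref{tight4gon}, \ref{two-tight}, and \ref{tight7gon_new} for $2$-tightness underlying tameness. The main obstacle I foresee is the $N_{2,2}$ subcase with $XX'$ of type $4$: proving that the constructed heptagon is \emph{standard} in the sense of Definition \ref{def:7gon} requires verifying via Lemma \ref{lemma:bdrpass_new} that the subpath through the type-$4$ edge is not contained in any quadrangle, and this relies critically on the separating-curve hypothesis, since without a separating curve the same configuration would collapse to a non-alternating quadrangle. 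The assumption $N \ne N_3$ is invoked to exclude the pathological behaviour of type-$3$ edges belonging to two triangles simultaneously (Lemma \ref{lem:move3}), which would otherwise disrupt the alternation bookkeeping.
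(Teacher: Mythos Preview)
Your overall architecture---splitting on the topology of $M$, building a quadrangle when $M$ is disconnected, and producing explicit model circuits when $M$ is connected---matches the paper's proof. However, two of your connected subcases are mishandled.

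\textbf{The $N_{2,2}$ subcase is incomplete.} You only treat the situation where $XX'$ is of type~4, but the separating-curve hypothesis does not force this. The paper observes that since $\{\alpha_1,\alpha_2\}$ is a pants decomposition of $M\cong N_{2,2}$, the assumed separating curve among $\alpha_1,\alpha_2,\alpha_2'$ must bound a one-holed Klein bottle $K\subset M$. If $\alpha_1$ is the separating curve, then $\alpha_2,\multi{\alpha_2}'\subset K$ and $XX'$ is of type~3 or~4; if instead $\alpha_2$ (or $\alpha_2'$) is separating, then $\alpha_1$ is the unique two-sided curve in $K$ and $XX'$ is of type~2. In every case the edge sits in a standard heptagon, but the position of $XX'$ within the heptagon differs (in the type-2 case the edges $VX$ and $X'W$ are of different types, one of type~4 and one of type~2), and you need to verify minimality of $X$ separately in each. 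Your sketch omits the type-2 and type-3 possibilities entirely.

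\textbf{The $S_{1,2}$ subcase is misdescribed.} The paper (following Margalit) produces a hexagon with no edges of type~3 or~4; this hexagon is \emph{not} alternating, and that is precisely why it is tame---condition~(2) of Definition~\ref{def:tame:circ} only constrains \emph{alternating} hexagons. Your ``alternating pentagon'' is not the construction used, and you give no indication of how to build one with the required properties.

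Finally, your explanation of the hypothesis $N\ne N_3$ is off. It is not used for the triangle bookkeeping of Lemma~\ref{lem:move3}; it is used in the connected case to rule out $M=N=N_3$, so that the enumeration $M\in\{S_{0,5},S_{1,2},N_{1,3},N_{2,2}\}$ is exhaustive once $M=N_{2,1}$ has been excluded by hypothesis.
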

\begin{proof}
Let $M$ be the union of non-trivial components of $N_{X\setminus\{\alpha_1,\alpha_2\}}$. 

\medskip
{\bf Case 1: $M$ is disconnected.}
If $M_1$ and $M_2$ are connected components of $M$, then 
\[M_1,M_2\in\left\{S_{0,4},S_{1,1},N_{1,2},N_{2,1}\right\}\]
and $XX'$ is contained in an alternating quadrangle $\Circ$ of the form
\[\xymatrix{ 
    \{\alpha_1,\alpha_2\}\ar@{-}[r]\ar@{-}[d]&\{\alpha_1,\multi{\alpha_2}'\}\ar@{-}[d]\\
    \{\multi{\alpha_1}',\alpha_2\}\ar@{-}[r]&\{\multi{\alpha_1}',\multi{\alpha_2}'\}
    }\]
Note that $\multi{\alpha_1}'$ and $\multi{\alpha_2}'$ may be pairs of disjoint one-sided curves (corresponding to edges of types 4), but in each case $X$ is minimal in $\Circ$.

\medskip
{\bf Case 2: $M$ is connected and $M\setminus \{\alpha_1,\alpha_2\}$ has three connected components.} In this case $M\cong S_{0,5}$, and we can use standard 5-cell to construct $\Circ$ as in the proof of Lemma~6 of \cite{Mar}. Since $M$ is orientable, $\Circ$ has no edges of type 4, so $X$ is minimal in $\Circ$.

\medskip
{\bf Case 3: $M$ is connected and $M\setminus \{\alpha_1,\alpha_2\}$ has less than 3 connected components} If $M\setminus \{\alpha_1,\alpha_2\}$ has less than 3 connected components, then it has exactly 2 connected components. Indeed, otherwise $M=N_{2,1}$ or $M=N=N_3$, which contradicts our assumptions.

If $M\cong S_{1,2}$, then  the  situation is identical to that in the proof of Lemma~6 of \cite{Mar}. Following the same argument  we obtain a hexagon $\Circ$ without edges of type 3 or 4 (because $M$ is orientable). This hexagon is not alternating, and hence $\Circ$ is tame and $X$ is minimal in $\Circ$.

Suppose now that $M\cong N_{1,3}$. If $\alpha_1$ is two-sided, then it is separating in $M$ and $\alpha_2,\multi{\alpha_2}'=\alpha_2'$ are one-sided curves in $M\setminus\{\alpha_1\}\cong N_{1,2}$ connected by an edge of type 3. If $\alpha_1$ is one-sided, then $\alpha_2,\multi{\alpha_2}'=\alpha_2'$ are two-sided curves in $M\setminus\{\alpha_1\}\cong S_{0,4}$ connected by an edge of type 2. In either case $XX'$ is contained in an alternating hexagon $\Circ$ without edges of type 4 and with 3 edges of type 3 -- see Figure \ref{fig:6gon}. 
\begin{figure}
    \centering
    \includegraphics[width=0.8\customwidth]{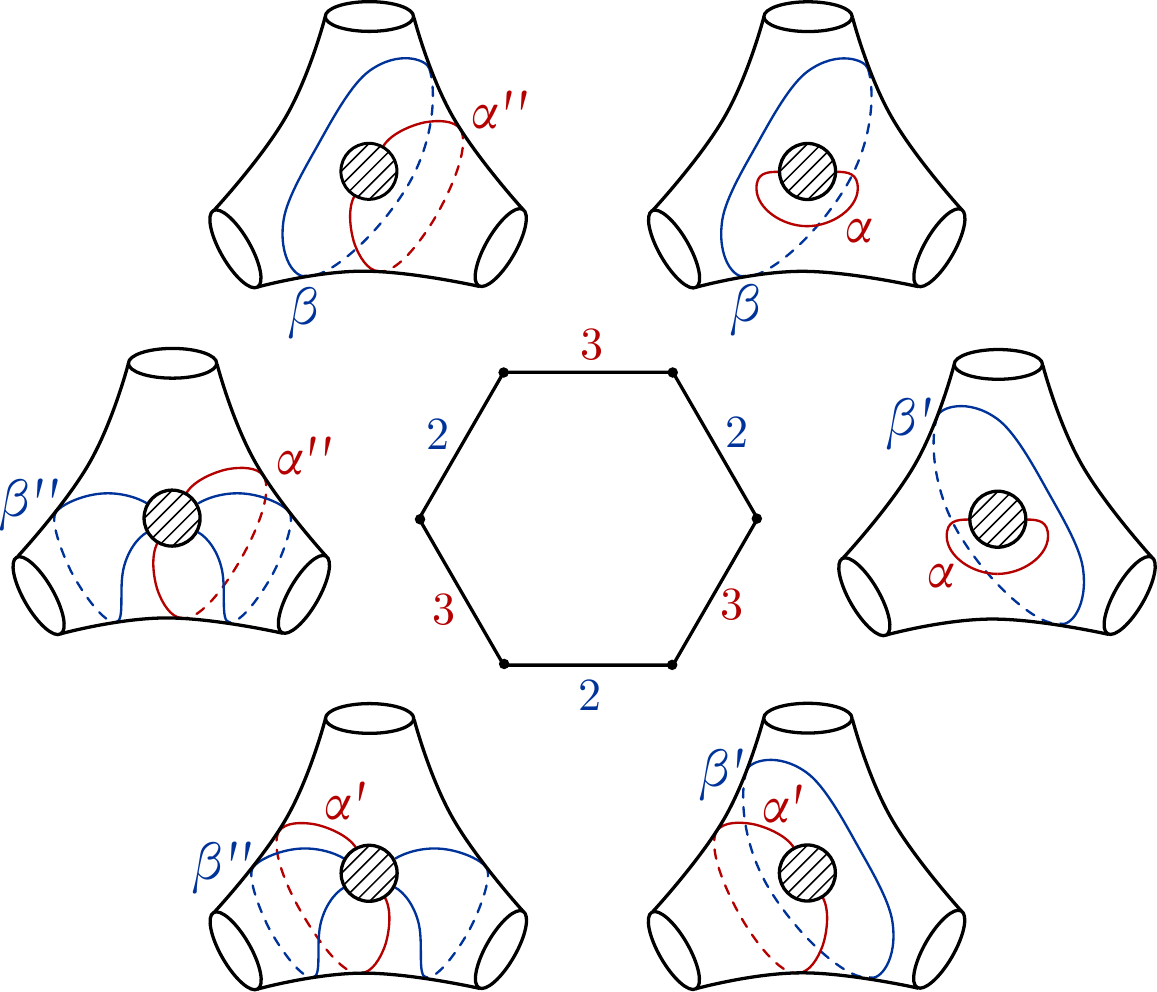}
    \caption{Tame alternating hexagon in $\pants(N_{1,3})$.}
    \label{fig:6gon}
\end{figure}
Such $\Circ$ is tame, and $X$ is minimal in $\Circ$.

Finally, suppose that $M\cong N_{2,2}$. We assumed that one of $\alpha_1,\alpha_2$ or $\multi{\alpha_2}'$ is separating. Moreover, $M\setminus\{\alpha_1,\alpha_2\}$ are 2 pairs of pants, so $\alpha_1$, $\alpha_2$ or $\alpha_2'$ cuts $N_{2,1}$ from $M$. 

If $\alpha_1$ cuts $N_{2,1}$ from $M$, then $\alpha_2,\multi{\alpha_2}'\in N_{2,1}$ are connected by an edge of type 3 or 4. In either case $XX'$ is contained in a standard heptagon $\Circ$. If $XX'$ is of type 4, then $X$ is minimal in $\Circ$. 

If $\alpha_2$ (or $\alpha_2'$) cuts $N_{2,1}$ from $M$, then $\alpha_1$ is the unique two-sided curve in $N_{2,1}$ and $\alpha_2,\multi{\alpha_2}'=\alpha_2'$ are connected by an edge of type 2. As in the previous case, $XX'$ is contained in a standard heptagon $\Circ$ and
$X$ is minimal in $\Circ$  (in this case the edges $VX$ and $X'W$ are of different type; one is of type 4 and the other one is of type 2).
\end{proof}
\begin{prop}\label{prop:edge-circuit-exception}
    Let
    \[X=\{\alpha_1,\alpha_2,\alpha_3,\ldots,\alpha_n\}\mathdash\{\alpha_1,\multi{\alpha_2}',\alpha_3,\ldots,\alpha_n\}=X'\]
    be an edge in $\pants(N)$ and let $M$ be the union of non-trivial components of $N_{X\setminus\{\alpha_1,\alpha_2\}}$. If $M\in\{S_{1,2},N_{2,2}\}$ and each of $\{\alpha_1,\alpha_2,
        \multi{\alpha_2}'\}$ is nonseparating in $M$,
      then $\multi{\alpha_2}'=\alpha_2'$ is a curve and there exists a vertex 
    \[X''=\{\alpha_1,\alpha_2'',\alpha_3,\ldots,\alpha_n\}\]
    in $\pants(N)$ and two edges 
    \begin{align*}
        XX''&\,:\, \{\alpha_1,\alpha_2,\alpha_3,\ldots,\alpha_n\}\mathdash\{\alpha_1,\alpha_2'',\alpha_3,\ldots,\alpha_n\}\\
        X''X'&\,:\, \{\alpha_1,\alpha_2'',\alpha_3,\ldots,\alpha_n\}\mathdash\{\alpha_1,\alpha_2',\alpha_3,\ldots,\alpha_n\}
    \end{align*}
    such that $XX''$ and $X''X'$ satisfy the assumptions of Proposition \ref{prop:edge-circuit-new}.
\end{prop}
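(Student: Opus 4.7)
The plan is to work inside the non-trivial surface $M\in\{S_{1,2},N_{2,2}\}$, where $\{\alpha_1,\alpha_2\}$ is a pants decomposition of $M$ and the edge $XX'$ restricts to an elementary move of that pants decomposition. The first observation is that a non-separating two-sided curve in either $S_{1,2}$ or $N_{2,2}$ has complement $S_{0,4}$: the cut surface is connected with Euler characteristic $-2$ and four boundary components, and the only non-orientable option $N_{0,4}$ does not exist. Hence $M\setminus\alpha_1\cong S_{0,4}$, and the support of the move $\alpha_2\to\multi{\alpha_2}'$ (the non-trivial component of $N_{X\setminus\{\alpha_2\}}$ containing $\alpha_2$) is exactly $S_{0,4}$. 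This surface is orientable, so the move cannot be of type~$4$ (which would require the support $N_{2,1}$), and therefore $\multi{\alpha_2}'=\alpha_2'$ is a single curve.

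Next I would exploit the pairing structure on the Farey graph $\pants(S_{0,4})$. Label the boundaries of $M\setminus\alpha_1\cong S_{0,4}$ as $b_1,b_2$ (the two copies of $\alpha_1$) and $b_3,b_4$ (the boundary components of $M$). Every non-trivial simple closed curve $\gamma$ in $S_{0,4}$ has a \emph{pairing type} --- the partition of $\{b_1,b_2,b_3,b_4\}$ it induces into two pairs --- and the three pairing types yield a proper $3$-colouring of $\pants(S_{0,4})$: Farey-adjacent vertices have distinct pairing types, and the three vertices of every Farey triangle realise all three pairings. One checks that $\gamma$ glues to a separating curve in $M$ if and only if its pairing type is $\{b_1,b_2\}|\{b_3,b_4\}$: in that case the two copies of $\alpha_1$ lie on one side of $\gamma$, so re-gluing them does not reconnect the two complementary pairs of pants; in the other two pairing types the gluing $b_1\sim b_2$ crosses $\gamma$ and joins its two sides.

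The main step is then a short argument in the Farey graph. Since $\alpha_2$ and $\alpha_2'$ are both non-separating in $M$, neither has pairing type $\{b_1,b_2\}|\{b_3,b_4\}$; since $i(\alpha_2,\alpha_2')=2$, i.e.\ they are Farey-adjacent, the proper $3$-colouring forces their pairing types to be the two remaining (distinct) colours. Consequently, the two common Farey-neighbours of $\alpha_2$ and $\alpha_2'$ (the third vertices of the two Farey triangles containing the edge $\alpha_2\alpha_2'$) both carry the colour $\{b_1,b_2\}|\{b_3,b_4\}$. I would choose $\alpha_2''$ to be either of these common neighbours: the resulting curve is disjoint from $X\setminus\{\alpha_2\}$, is separating in $M$, and satisfies $i(\alpha_2,\alpha_2'')=i(\alpha_2',\alpha_2'')=2$. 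Setting $X''=\{\alpha_1,\alpha_2'',\alpha_3,\ldots,\alpha_n\}$ produces a vertex of $\pants(N)$ joined to $X$ and $X'$ by edges of type~$2$.

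Finally I would verify the hypotheses of Proposition~\ref{prop:edge-circuit-new} for both new edges. Writing them as $X\to X''$ (replacing $\alpha_2$ by $\alpha_2''$) and $X''\to X'$ (replacing $\alpha_2''$ by $\alpha_2'$), both using the same distinguished curve $\alpha_1$, the associated surface $N_{X''\setminus\{\alpha_1,\alpha_2''\}}$ coincides with the original $M$; in particular $M\ne N_{2,1}$, and since $\alpha_2''$ is separating in $M$ the second condition of Proposition~\ref{prop:edge-circuit-new} holds in both cases. The main subtlety I anticipate is the topological claim in the $N_{2,2}$ case: the gluing $b_1\sim b_2$ recovering $N_{2,2}$ from $S_{0,4}$ must be orientation-reversing (otherwise we would recover the orientable surface $S_{1,2}$), which is what forces the pair of pants on the $\{b_1,b_2\}$-side of $\alpha_2''$ to close up into $N_{2,1}$ rather than $S_{1,1}$; this is the only place where orientability plays a non-trivial role.
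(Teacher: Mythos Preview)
Your argument is correct and follows the same idea as the paper: produce a curve $\alpha_2''$ in $M\setminus\alpha_1\cong S_{0,4}$ that is Farey-adjacent to both $\alpha_2$ and $\alpha_2'$ and separating in $M$. The paper establishes this by a reference to Margalit for $M\cong S_{1,2}$ and by an explicit figure for $M\cong N_{2,2}$, whereas you give a uniform picture-free justification via the $3$-colouring of $\pants(S_{0,4})$ by boundary-pairing type; this makes the existence of $\alpha_2''$ transparent and handles both cases at once, at the cost of spelling out the (easy) facts that the colouring is proper and that the colour $\{b_1,b_2\}\mid\{b_3,b_4\}$ corresponds exactly to separating curves of $M$. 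One small point worth stating explicitly: before invoking that $\alpha_1$ is a non-separating \emph{two-sided} curve, you should note (as the paper does) that $\{\alpha_1,\alpha_2\}$ is a two-curve pants decomposition of $M$, which forces both curves to be two-sided when $M\cong N_{2,2}$. Your final remark about the orientation-reversing gluing is correct but unnecessary for the verification of Proposition~\ref{prop:edge-circuit-new}, which only requires that $\alpha_2''$ be separating in $M$.
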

\begin{proof}
    If $M\cong S_{1,2}$ see Figure 8 of \cite{Mar} for the construction of $\alpha_2''$, so assume that $M\cong N_{2,2}$. Note the following:
    \begin{enumerate}
        \item $\{\alpha_1,\alpha_2\}$ and $\{\alpha_1,\multi{\alpha_2}'\}$ decompose $M$ into pairs of pants, hence $\alpha_1$, $\alpha_2$ and $\multi{\alpha_2}'=\alpha_2'$ are two-sided curves;
        \item $\alpha_2$ and $\alpha_2'$ intersect in two points in $M\setminus\{\alpha_1\}\cong S_{0,4}$.
    \end{enumerate}
Hence, the configuration of $\alpha_1,\alpha_2,\alpha_2'$ is as in Figure \ref{fig:n22:nonsep}.
    \begin{figure}[h]
\begin{center}
\includegraphics[width=0.55\customwidth]{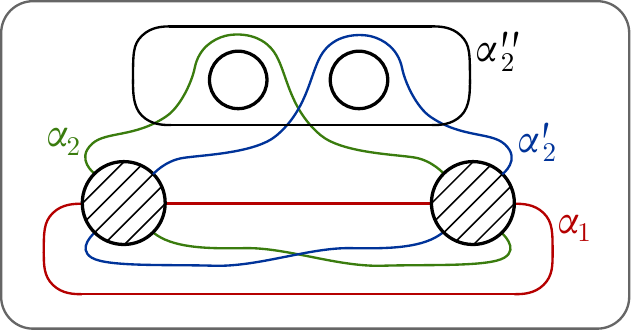}
\caption{Nonseparating curves $\alpha_1,\alpha_2,\alpha_2'$ in $N_{2,2}$.}\label{fig:n22:nonsep} %
\end{center}
\end{figure}
See the same figure for $\alpha_2''$.
\end{proof}

\section{Automorphisms of $\pants(N)$}\label{sec:aut:pants}
\begin{defi}
    If $(F,X)$ is a marked $F$-graph and $\alpha\in X$ is the unique curve such that $F= F(X,\alpha)$ (see Proposition \ref{lem:Fsubgraph}), then define 
    \[v[F,X]=\alpha.\]
\end{defi}
 \begin{rem}\label{lem:v:marked}
 By definition, if $X\in \pants^0(N)$ and $\alpha\in X$, then
     \[v\left[F(X,\alpha),X\right]=\alpha.\]
 \end{rem}
\begin{defi}
    Let $A\in \Aut(\pants(N))$ and $\alpha\in \curv^0(N)$. Extend $\alpha$ to a vertex $X$ of $\pants(N)$ and define 
    \[\Phi(A)(\alpha)=v\left[A(F(X,\alpha)),A(X)\right].\]
\end{defi}
\begin{prop}\label{prop:Phi_well_defined}
    Definition of $\Phi$ does not depend on $X\in \pants^0(N)$, hence $\Phi(A)$ is a well defined map
    \[\Phi(A):\,  \curv^0(N)\to \curv^0(N).\]
\end{prop}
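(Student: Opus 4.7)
The plan is first to reduce to the case of two adjacent extensions via a connectivity argument, and then to build a commuting-moves quadrangle whose image under $A$ must have a ``rectangular'' form thanks to alternation and 2-tightness. For the reduction, given two extensions $X,X'\in\pants^0(N)$ of $\alpha$, I cut $N$ along $\alpha$ to obtain a (possibly disconnected) surface $N_\alpha$ whose non-trivial components all have negative Euler characteristic and hence connected pants graphs (by Theorem~\ref{pants_connected} in the nonorientable case and the Hatcher--Thurston theorem in the orientable case). Extensions of $\alpha$ correspond bijectively to pants decompositions of $N_\alpha$, and their adjacencies correspond exactly to edges of $\pants(N)$ whose support is disjoint from~$\alpha$. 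Thus one may assume that $X$ and $X'$ differ by a single elementary move $\fullMv{XX'}{\gamma}{\mv}{\multi{\gamma}'}$ with $\gamma\neq\alpha$, and (by relabeling if needed) $|X|\le|X'|$.

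Next, pick any $Y$ adjacent to $X$ in $F(X,\alpha)$, writing $\fullMv{XY}{\alpha}{\mv}{\multi{\alpha}'}$, and set
\[W:=(Y\setminus\{\gamma\})\cup\multi{\gamma}'=(X'\setminus\{\alpha\})\cup\multi{\alpha}'.\]
Because the supports of the $\alpha$-move and the $\gamma$-move have disjoint interiors (each curve lies only on the boundary of the other's support), the two moves commute, so $W$ is a well-defined pants decomposition and $XYWX'$ is a non-degenerate quadrangle whose opposite edges have matching types. By Corollary~\ref{cor:edge:in:F}, the two edges incident to each vertex lie in different F-graphs, so this quadrangle is alternating, and $X$ is minimal in it since $|X|\le|Y|$ (because every edge in $F(X,\alpha)$ either preserves or increases cardinality) and $|X|\le|X'|$. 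Applying $A$ and invoking Corollary~\ref{cor:type_preserv}, Remark~\ref{rem:alter:inv}, Lemma~\ref{tight4gon} and Lemma~\ref{lem:minimal}, the image $A(X)A(Y)A(W)A(X')$ is also an alternating, 2-tight quadrangle with $A(X)$ minimal.

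Consider now the sub-path $A(X')\to A(X)\to A(Y)\to A(W)$: it is alternating, 2-tight (its vertex set coincides with that of the quadrangle, so the same common multicurve witnesses 2-tightness), and has $A(X)$ as its minimal middle vertex. Lemma~\ref{alter_tight_path} therefore forces it into the form
\[A(X')=S\cup\{\multi{\beta_1}',\beta_2\},\ A(X)=S\cup\{\beta_1,\beta_2\},\ A(Y)=S\cup\{\beta_1,\multi{\beta_2}'\},\ A(W)=S\cup\{\multi{\beta_1}'',\multi{\beta_2}'\},\]
where $|S|=|A(X)|-2$. The closing edge $A(W)A(X')$ is a single elementary move that can alter only one of the two slots; since $\multi{\beta_2}'\ne\beta_2$, the second slot is already different, so this move acts on the second slot and forces $\multi{\beta_1}''=\multi{\beta_1}'$. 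In the resulting rectangular form, both edges $A(XY)$ and $A(X'W)$ change the $\beta_2$-slot, so by Corollary~\ref{A:inv:F:graph} both values $v\!\left[A(F(X,\alpha)),A(X)\right]$ and $v\!\left[A(F(X',\alpha)),A(X')\right]$ equal $\beta_2$, as required.

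The hardest step will be extracting the rectangular form from Lemma~\ref{alter_tight_path} together with the closing edge. In the subcase when all four edges of the quadrangle are of types 1, 2, or 3, the cardinality count yielding $\multi{\beta_1}''=\multi{\beta_1}'$ is immediate; when type 4 moves are involved (so some vertices have cardinality $|A(X)|+1$), the observation that an elementary move alters exactly one slot of the pants decomposition, together with the preservation of type-4 directions from Corollary~\ref{cor:type_preserv}, ensures that the ``$\alpha$-slot''/``$\gamma$-slot'' dichotomy transfers coherently under $A$ and the same conclusion holds.
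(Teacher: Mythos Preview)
Your reduction to adjacent extensions is fine, but the central step---constructing the ``commuting-moves quadrangle'' $XYWX'$---fails in general. The claim that the supports of the $\alpha$-move and the $\gamma$-move have disjoint interiors is simply false: both supports are built from pairs of pants of $N_X$, and whenever $\alpha$ and $\gamma$ lie on the boundary of a common pair of pants, that pair of pants belongs to both supports. Concretely, let $M$ be the union of non-trivial components of $N_{X\setminus\{\alpha,\gamma\}}$. If $M$ is disconnected your quadrangle exists, but if $M$ is connected it typically does not. For instance, when $M\cong S_{0,5}$, the arc $\multi{\gamma}'\cap S_\alpha$ (where $S_\alpha$ is the four-holed sphere support of the $\alpha$-move) cuts $S_\alpha$ into an annulus and a pair of pants in which $\alpha$ is boundary-parallel; hence \emph{no} curve $\alpha'$ with $i(\alpha,\alpha')=2$ is disjoint from $\multi{\gamma}'$, and your $W$ is never a pants decomposition. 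The same obstruction arises when $M\cong N_{2,1}$: writing the fan vertices as $v_i=\{T_\delta^i\beta,\,T_\delta^{i+1}\beta\}$, the only choices are $\alpha'=T_\delta^2\beta$ and $\gamma'=T_\delta^{-1}\beta$, and these intersect.

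This is exactly why the paper cannot get by with quadrangles alone and develops the machinery of \emph{tame circuits} (Propositions~\ref{prop:edge-circuit-new} and~\ref{prop:edge-circuit-exception}): when $M$ is connected one must embed the edge $XX'$ in a pentagon, hexagon, or standard heptagon, whose 2-tightness and alternation survive under $A$ (Remark~\ref{rem:tame_inv}). The cases $M\cong N_{2,1}$ and $N=N_3$ are handled separately. Your Lemma~\ref{alter_tight_path} extraction and the closing-edge argument would be fine once you have a suitable circuit, but producing that circuit is the real content of the proof, and it does not reduce to a commuting square.
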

\begin{proof}
    We need to show that if $X,X'\in \pants^0(N)$ are such that $\alpha\in X\cap X'$, then 
    \[v\left[A(F(X,\alpha)),A(X)\right]=v\left[A(F(X',\alpha)),A(X')\right].\]
    Since $\pants(N_\alpha)$ is path-connected by Theorem \ref{pants_connected}, it suffices to show the statement for $X$ and $X'$ being connected by an edge $X\mv X'$.
    We can assume that
    \begin{align*}
            X&=\{\alpha,\alpha_2,\alpha_3,\ldots,\alpha_n\},\\
            X'&=\{\alpha,\multi{\alpha_2}',\alpha_3,\ldots,\alpha_n\},
        \end{align*}
         so if $X$ and $X'$ are connected by an edge of type 4, we assume that $\alpha_2$ is two-sided. 
         Let $M$ be the union of non-trivial components of $N_{X\setminus\{\alpha,\alpha_2\}}$. We consider three cases.
  
\medskip
{\bf Case 1: $N\ne N_3$ and $M\ne N_{2,1}$.}
        By Proposition \ref{prop:edge-circuit-exception}, we can assume that $X$ and $X'$ satisfy the assumptions of Proposition~\ref{prop:edge-circuit-new} -- if this is not the case, we split the edge 
        \[X\mv X'\]
        into two edges 
        \[X\mv X''\mv X'\]
        satisfying the assumptions of Proposition \ref{prop:edge-circuit-new}.

        By Proposition \ref{prop:edge-circuit-new}, $X$ and $X'$ are consecutive vertices of a tame circuit $\Circ$ in $\pants(N)$, containing an alternating path $P$
        \[V \mv X\mv X' \mv W\]
        such that $\alpha\notin V\cup W$, and $X$ is minimal in $P$.
        Since, $\alpha\notin V\cup W$ and $\alpha\in X\cap X'$, by Corollary~\ref{cor:edge:in:F},
    \begin{equation}\label{eq:FXV}
    \begin{aligned}
        F(XV)&=F(X,\alpha)\\
        F(X'W)&=F(X',\alpha).
        \end{aligned}
    \end{equation}        
        By Remark \ref{rem:tame_inv}, $A(\Circ)$ is tame and 2-tight. 
        By Remark \ref{rem:alter:inv}, the path $A(P)$ 
        \[A(V) \mv A(X)\mv A(X') \mv A(W)\]
        is alternating, and by Lemma \ref{lem:minimal}, $A(X)$ is minimal in $A(P)$.
        By Remark \ref{alter_in _tame}, $A(P)$ satisfies the assumptions of Lemma \ref{alter_tight_path} and we have
\[\xymatrix{ 
    A(V)\supseteq \{\multi{\beta_1}',\beta_2\}\ar@{-}[r]&\{\beta_1,\beta_2\}\subseteq A(X)\ar@{-}[d]\\
    A(W)\supseteq \{\multi{\beta_1}'',\multi{\beta_2}'\}\ar@{-}[r]&\{\beta_1,\multi{\beta_2}'\}\subseteq A(X')
    }\]
        where $\beta_1$ and $\beta_2$ are curves. By Corollary \ref{cor:edge:in:F}, 
    \begin{equation}\label{eq:FAXAV}
    \begin{aligned}
        F(A(X)A(V))&=F(A(X),\beta_1)\\
        F(A(X')A(W))&=F(A(X'),\beta_1).
    \end{aligned}\end{equation}
    Hence, Remark \ref{lem:v:marked}, Corollary \ref{A:inv:F:graph}, \eqref{eq:FXV} and \eqref{eq:FAXAV} imply that
         \begin{align*}
        v\left[A(F(X,\alpha)),A(X)\right]&=v\left[A(F(XV)),A(X)\right]\\
        &=v\left[F(A(X)A(V)),A(X)\right]=v\left[F(A(X),\beta_1),A(X)\right]=\beta_1\\
v\left[A(F(X',\alpha)),A(X')\right]&=v\left[A(F(X'W,X')),A(X')\right]\\
        &=v\left[F(A(X')A(W)),A(X')\right]=v\left[F(A(X'),\beta_1),A(X')\right]=\beta_1.
         \end{align*}
\medskip
{\bf Case 2: $M=N_{2,1}$.} In this case $\alpha$ and $\alpha_2$ are one-sided curves contained in a one-holed Klein bottle $M$.
Let $V,W\in\pants^0(N)$ be such that
    \[F(X,\alpha)=XV,\qquad F(X',\alpha)=X'W.\]
    We have a path $P$ in  $\pants(N)$:
    \[V \mvIII X\mvIII X' \mvIII W,\]  
    which is contained in a fan subgraph. Then $A(P)$ is also a path of edges of type 3 contained in a fan subgraph of $\pants(N)$, and by the structure of $\pants(N_{2,1})$ (see Proposition \ref{pantsN21}) it is of the same form as in Case 1 with one-sided $\beta_1,\beta_2,\beta_1'$, $\beta_1''$ and $\beta_2'$. The rest of the proof is the same as in Case 1.
    
\medskip
{\bf Case 3: $N=N_3$.} We can assume that one of the curves $\alpha$ or $\alpha_2$ is two-sided, because otherwise we are in Case 2. We have $X=\{\alpha,\alpha_2\}$. Extend $XX'$ to an alternating path
$P$
        \[V \mathdash X\mathdash X' \mathdash W\]
        such that $\alpha\notin V\cup W$.  By Remark \ref{rem:alter:inv}, the path $A(P)$ is alternating. Since one of the edges $A(XX')$ or $A(XV)$ is not of type $3$ (because $\alpha$ or $\alpha_2$ is two-sided), $A(X)$ contains a two-sided curve and $|A(X)|=2$, which means that $A(P)$ is 2-tight and $A(X)$ is minimal.  By Lemma \ref{alter_tight_path}, $A(P)$ is of the same form as in Case 1, and the rest of the proof is the same as in Case~1.
\end{proof}
Our next goal is to prove the following proposition.
\begin{theorem}\label{prop:Phi:iso}
    For every nonorienatable surface $N$ with $\chi(N)<0$, 
    \[\Phi\colon \Aut(\pants(N))\to \Aut(\curv(N))\]
    is an isomorphism.
\end{theorem}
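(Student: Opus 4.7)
The plan is to establish four properties of $\Phi$: it is a group homomorphism, each $\Phi(A)$ is a simplicial automorphism of $\curv(N)$, $\ker\Phi$ is trivial, and $\Phi$ is surjective. Well-definedness of $\Phi(A)$ as a map $\curv^0(N)\to\curv^0(N)$ is already provided by Proposition \ref{prop:Phi_well_defined}. For the homomorphism property, given $A,B\in\Aut(\pants(N))$ and $\alpha\in X\in\pants^0(N)$, set $\beta=\Phi(B)(\alpha)$: by Corollary \ref{A:inv:F:graph} the pair $(B(F(X,\alpha)),B(X))$ is a marked $F$-graph, and by Proposition \ref{lem:Fsubgraph} it equals $F(B(X),\beta)$. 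Iterating the same reasoning gives $A(B(F(X,\alpha)))=F(AB(X),\Phi(A)(\beta))$, whence $\Phi(AB)(\alpha)=\Phi(A)\Phi(B)(\alpha)$. That each $\Phi(A)$ is a simplicial automorphism of $\curv(N)$ then follows by extending disjoint curves $\alpha,\beta$ to a common pants decomposition $X$: both $\Phi(A)(\alpha)$ and $\Phi(A)(\beta)$ lie in $A(X)$ and are hence disjoint, while bijectivity is provided by $\Phi(A^{-1})$ as an inverse.

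For injectivity, if $\Phi(A)=\mathrm{id}$, then for every $X\in\pants^0(N)$ and every $\alpha\in X$ the equality $\alpha=v[A(F(X,\alpha)),A(X)]$ combined with Remark \ref{lem:v:marked} forces $\alpha\in A(X)$; hence $X\subseteq A(X)$, and by maximality $X=A(X)$. So $A$ fixes every vertex, and since $\pants(N)$ has no multiple edges, $A=\mathrm{id}$. For surjectivity, I would take $\psi\in\Aut(\curv(N))$ and define $A$ on $\pants^0(N)$ by $A(X)=\psi(X)$. A simplicial automorphism preserves cardinalities of multicurves (since it preserves dimensions of simplices) and their pairwise intersections, and pants decompositions coincide with maximal multicurves of $\curv(N)$, so $A$ is a well-defined bijection of $\pants^0(N)$. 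Edges of $\pants(N)$ are characterized by the condition $|X\cap Y|=\min(|X|,|Y|)-1$, which is preserved by $\psi$; conversely, this intersection pattern forces the non-trivial component of $N_{X\cap Y}$ to be one of $S_{0,4},S_{1,1},N_{1,2},N_{2,1}$, which produces an elementary move of the appropriate type. Hence $A\in\Aut(\pants(N))$, and a direct check using Remark \ref{lem:v:marked} yields $\Phi(A)(\alpha)=v[F(\psi(X),\psi(\alpha)),\psi(X)]=\psi(\alpha)$.

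The main obstacle I anticipate is the surjectivity step. In the nonorientable case pants decompositions may have different cardinalities, so type-$4$ moves (which change cardinality) must be detectable purely from the simplicial data of $\curv(N)$; verifying this relies on the maximality characterization of pants decompositions combined with the case analysis of the non-trivial complementary surface sketched above.
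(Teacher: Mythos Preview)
Your arguments for the homomorphism property, for simpliciality of $\Phi(A)$, and for injectivity are correct and essentially identical to the paper's Lemmas \ref{lem:Phi:simp}, \ref{lem:Phi:hom}, and \ref{lem:Phi:inj}.

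The genuine gap is in your surjectivity argument, exactly where you anticipated. Your claim that edges of $\pants(N)$ are characterized by the condition $|X\cap Y|=\min(|X|,|Y|)-1$ is false in the ``only if'' direction. It is true that this condition forces the non-trivial component $M$ of $N_{X\cap Y}$ to be one of $S_{0,4}$, $S_{1,1}$, $N_{1,2}$, $N_{2,1}$, but this does \emph{not} force the replaced curves to have the minimal intersection required by the definition of an elementary move. Concretely, take $X\in\pants^0(N)$ with $\alpha\in X^+$ such that $M\cong S_{0,4}$, and let $\beta$ be a curve in $M$ with $i(\alpha,\beta)=4$ (such curves exist in $S_{0,4}$). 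Then $Y=(X\setminus\{\alpha\})\cup\{\beta\}$ is a pants decomposition with $|X\cap Y|=|X|-1$, yet $XY$ is not an edge of $\pants(N)$. Since a simplicial automorphism of $\curv(N)$ encodes only disjointness and not geometric intersection number, your argument gives no reason why $\psi$ should take edges of $\pants(N)$ to edges.

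The paper avoids this obstruction altogether: it invokes Theorem \ref{main:aut_curv}, which says that every $\psi\in\Aut(\curv(N))$ equals $\eta(f)$ for some $f\in\mcg(N)$, and then checks that the diagram
\[
\begin{CD}
\mcg(N) @= \mcg(N)\\
@V{\theta}VV @VV{\eta}V\\
\Aut(\pants(N)) @>\Phi>> \Aut(\curv(N))
\end{CD}
\]
commutes (Lemma \ref{lem:Phi:sur}). Since a mapping class manifestly preserves elementary moves, $\theta(f)\in\Aut(\pants(N))$ and $\Phi(\theta(f))=\eta(f)=\psi$. Any attempt to make your direct approach work would have to show that curve-complex automorphisms preserve the relevant intersection numbers, which is essentially equivalent to reproving Theorem \ref{main:aut_curv}.
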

The proof of the above proposition is divided into several steps -- see Lemmas \ref{lem:Phi:simp} -- \ref{lem:Phi:sur} below.
\begin{lemma}\label{lem:Phi:simp}
    If $A\in \Aut(\pants(N))$ and $\alpha,\beta\in \curv^0(N)$ are such that $i(\alpha,\beta)=0$, then \[i(\Phi(A)(\alpha),\Phi(A)(\beta))=0.\] Hence, 
    \[\Phi(A)\colon\curv(N)\to \curv(N)\]
     is a simplicial map. 
\end{lemma}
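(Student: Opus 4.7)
The plan is to reduce the claim to the already-established Proposition \ref{prop:Phi_well_defined} by computing both $\Phi(A)(\alpha)$ and $\Phi(A)(\beta)$ via a single common pants decomposition. Assume $\alpha\neq\beta$ (the case $\alpha=\beta$ is trivial). Since $\alpha,\beta$ are disjoint, $\{\alpha,\beta\}$ is a multicurve and so extends to some pants decomposition $X\in\pants^0(N)$ containing both curves. By Proposition \ref{prop:Phi_well_defined}, the value of $\Phi(A)$ at either curve may be computed from $X$, giving
\[\Phi(A)(\alpha)=v[A(F(X,\alpha)),A(X)],\qquad \Phi(A)(\beta)=v[A(F(X,\beta)),A(X)].\]
By Corollary \ref{A:inv:F:graph}, $(A(F(X,\alpha)),A(X))$ and $(A(F(X,\beta)),A(X))$ are marked $F$-graphs, and Proposition \ref{lem:Fsubgraph} guarantees that the two values above are elements of $A(X)$.

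Next I would show that these two elements of $A(X)$ are distinct. By Remark \ref{lem:v:marked},
\[v[F(X,\alpha),X]=\alpha\neq\beta=v[F(X,\beta),X],\]
so $F(X,\alpha)\neq F(X,\beta)$ and hence $A(F(X,\alpha))\neq A(F(X,\beta))$. The uniqueness part of Proposition \ref{lem:Fsubgraph} (the curve associated with a marked $F$-graph is uniquely determined by the pair) then forces $\Phi(A)(\alpha)\neq\Phi(A)(\beta)$.

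Since $\Phi(A)(\alpha)$ and $\Phi(A)(\beta)$ are two distinct elements of the multicurve $A(X)$, they are disjoint in $N$, giving $i(\Phi(A)(\alpha),\Phi(A)(\beta))=0$. For the simpliciality claim, recall that a simplex of $\curv(N)$ is precisely a multicurve; given any such multicurve $\{\alpha_0,\ldots,\alpha_k\}$, extend it to a pants decomposition $X$ and apply the argument above to every pair to conclude that $\{\Phi(A)(\alpha_0),\ldots,\Phi(A)(\alpha_k)\}$ is a set of pairwise distinct, pairwise disjoint curves in $A(X)$, hence a multicurve and therefore a simplex. There is no real obstacle; all the technical work has been pushed into Proposition \ref{prop:Phi_well_defined}, and the present lemma is essentially a corollary of it combined with the uniqueness assertion in Proposition \ref{lem:Fsubgraph}.
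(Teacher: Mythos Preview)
Your proof is correct and follows essentially the same approach as the paper: extend $\{\alpha,\beta\}$ to a common pants decomposition $X$, compute both images via $X$, and observe that they lie in the multicurve $A(X)$. Your additional argument for $\Phi(A)(\alpha)\neq\Phi(A)(\beta)$ via the uniqueness in Proposition~\ref{lem:Fsubgraph} is correct and slightly strengthens the paper's version, which only records $i(\Phi(A)(\alpha),\Phi(A)(\beta))=0$ without addressing distinctness (the paper recovers injectivity later from the group-homomorphism property in Lemma~\ref{lem:Phi:hom}).
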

\begin{proof}
    We extend $\{\alpha,\beta\}$ to a vertex
    \[X=\{\alpha,\beta,\gamma_3,\ldots,\gamma_n\}\in \pants^0(N).\]
    Then 
    \[X=F(X,\alpha)\cap F(X,\beta)\]
    and 
\begin{align*}
    \Phi(A)(\alpha)&=v\left[A(F(X,\alpha)),A(X)\right]\\
    \Phi(A)(\beta)&=v\left[A(F(X,\beta)),A(X)\right].
\end{align*}
Hence,
\[ \Phi(A)(\alpha),\Phi(A)(\beta)\in A(X)\]
are disjoint as they are in the same vertex $A(X)$ of $\pants^0(N)$.
\end{proof}


\begin{lemma}\label{lem:Phi:hom}
    If $A,B\in \Aut(\pants(N))$ and $\alpha\in \curv^0(N)$, then
    \[  \Phi(A)(\Phi(B)(\alpha))=\Phi(AB)(\alpha).\]
    Hence 
    \[\Phi\colon \Aut(\pants(N))\to \Aut(\curv(N))\]
    is a homomorphism.
\end{lemma}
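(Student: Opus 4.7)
The plan is a direct unpacking of the definitions of $\Phi$ and of the map $v[\cdot,\cdot]$, leveraging the independence of $\Phi(A)(\alpha)$ from the chosen extension $X$ of $\alpha$ (Proposition \ref{prop:Phi_well_defined}) to make a convenient choice of extension when computing $\Phi(A)(\Phi(B)(\alpha))$.

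First, fix any $X \in \pants^0(N)$ with $\alpha \in X$. Unfolding definitions gives
\[
\Phi(AB)(\alpha) \;=\; v\!\left[(AB)(F(X,\alpha)),\,(AB)(X)\right] \;=\; v\!\left[A(B(F(X,\alpha))),\,A(B(X))\right].
\]
Next, set $\beta = \Phi(B)(\alpha) = v[B(F(X,\alpha)),\,B(X)]$. By the definition of $v[\cdot,\cdot]$, $\beta$ is the unique curve of $B(X)$ satisfying $F(B(X),\beta) = B(F(X,\alpha))$. In particular $\beta \in B(X)$, so $Y \coloneqq B(X)$ is a legitimate extension of $\beta$ to a pants decomposition.

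Then, by Proposition \ref{prop:Phi_well_defined}, we may compute $\Phi(A)(\beta)$ using this $Y$:
\[
\Phi(A)(\beta) \;=\; v\!\left[A(F(B(X),\beta)),\,A(B(X))\right] \;=\; v\!\left[A(B(F(X,\alpha))),\,A(B(X))\right],
\]
which matches the expression for $\Phi(AB)(\alpha)$ above. Thus $\Phi(A)\circ \Phi(B) = \Phi(AB)$ as maps on $\curv^0(N)$.

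Finally, to conclude that $\Phi$ is a homomorphism into $\Aut(\curv(N))$, I observe that $\Phi(\mathrm{id}) = \mathrm{id}$ follows at once from Remark \ref{lem:v:marked}, and hence the identity $\Phi(A)\circ\Phi(A^{-1}) = \mathrm{id} = \Phi(A^{-1})\circ \Phi(A)$ shows each $\Phi(A)$ is a bijection on $\curv^0(N)$; combined with Lemma \ref{lem:Phi:simp} (simpliciality), $\Phi(A) \in \Aut(\curv(N))$. There is no real obstacle here beyond bookkeeping; the only subtle point is being careful to invoke Proposition \ref{prop:Phi_well_defined} in order to select $Y = B(X)$, which is exactly what makes the two sides line up.
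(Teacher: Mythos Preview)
Your argument is correct and is essentially identical to the paper's own proof: both pick $X\ni\alpha$, set $\beta=\Phi(B)(\alpha)$, use the defining relation $F(B(X),\beta)=B(F(X,\alpha))$, and then compute $\Phi(A)(\beta)$ using the extension $B(X)$ to match it with $\Phi(AB)(\alpha)$. Your concluding remark that $\Phi(\mathrm{id})=\mathrm{id}$ via Remark~\ref{lem:v:marked}, hence each $\Phi(A)$ is bijective and (with Lemma~\ref{lem:Phi:simp}) lies in $\Aut(\curv(N))$, is just a slightly more explicit version of the paper's final sentence.
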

\begin{proof}
    Let $X\in \pants^0(N)$ such that $\alpha\in X$ and $F=F(X,\alpha)$. If
    \[\beta=\Phi(B)(\alpha)=v(B(F),B(X)),\]
    then $\beta\in B(X)$ and 
    \[B(F)=F(B(X),\beta).\]
    Hence,
    \[\begin{aligned}
      \Phi(A)(\Phi(B)(\alpha))&=\Phi(A)(\beta)=v\left[A(F(B(X),\beta)),A(B(X))\right]\\&=v\left[AB(F),AB(X)\right]=
    v\left[AB(F(X,\alpha)),AB(X)\right]=\Phi(AB)(\alpha).  
    \end{aligned}\]
    This and Lemma \ref{lem:Phi:simp} imply that for any $A\in \Aut(\pants(N))$, $\Phi(A)$ is an invertible simplicial map, so $\Phi(A)\in \Aut(\curv(N))$.
\end{proof}
\begin{lemma}[Injectivity] \label{lem:Phi:inj}
    If $\Phi(A)=\text{id}$ in $\Aut(\curv(N))$, then $A=\text{id}$ in $\Aut(\pants(N))$.
\end{lemma}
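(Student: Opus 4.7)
The strategy is a short unwrapping of the definition of $\Phi$, followed by an elementary maximality argument for pants decompositions. Fix $A\in\Aut(\pants(N))$ with $\Phi(A)=\mathrm{id}$, and let $X\in\pants^0(N)$ be arbitrary. For each $\alpha\in X$ we have, by definition,
\[
\alpha\,=\,\Phi(A)(\alpha)\,=\,v\!\left[A(F(X,\alpha)),A(X)\right].
\]
The plan is to observe that the value of $v[\,\cdot\,,Y]$ is, by construction, an element of $Y$, so the displayed equation forces $\alpha\in A(X)$. More precisely, Corollary~\ref{A:inv:F:graph} gives that $A(F(X,\alpha))$ is of the form $F(A(X),\beta)$ for a unique $\beta\in A(X)$ (this is exactly the content of Proposition~\ref{lem:Fsubgraph} applied to the marked F-graph $(A(F(X,\alpha)),A(X))$), and by Remark~\ref{lem:v:marked} that unique $\beta$ equals $v[A(F(X,\alpha)),A(X)]$. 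Hence $\alpha=\beta\in A(X)$.

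Ranging over $\alpha\in X$ yields $X\subseteq A(X)$. Since both $X$ and $A(X)$ are pants decompositions of $N$, and every pants decomposition is a maximal multicurve (any pair of pants contains no nontrivial curve, so no curve can be added to a pants decomposition while keeping it a multicurve), the inclusion $X\subseteq A(X)$ forces $X=A(X)$. Therefore $A$ fixes every vertex of $\pants(N)$.

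Finally, since $\pants(N)$ is a simple graph and a graph automorphism that fixes every vertex of a simple graph is the identity on edges as well, we conclude $A=\mathrm{id}$ in $\Aut(\pants(N))$. There is no real obstacle here: the argument is essentially a bookkeeping verification that the definition of $\Phi(A)(\alpha)$ outputs a curve belonging to $A(X)$, together with the maximality of pants decompositions as multicurves. The only subtle point worth double-checking is that $v[A(F(X,\alpha)),A(X)]$ is well defined, i.e.\ that $(A(F(X,\alpha)),A(X))$ is indeed a marked F-graph in the sense of Definition~\ref{defi:f:graph}; this follows because $A$ is an isomorphism of $\pants(N)$ and hence sends marked F-graphs to marked F-graphs (using Corollary~\ref{A:inv:F:graph} and that $A(X)$ has infinite degree in $A(F(X,\alpha))$ whenever $X$ does in $F(X,\alpha)$).
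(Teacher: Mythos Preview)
Your proof is correct and follows essentially the same approach as the paper: use $\alpha=\Phi(A)(\alpha)=v[A(F(X,\alpha)),A(X)]\in A(X)$ to get $X\subseteq A(X)$, then invoke maximality of pants decompositions. The paper's version is terser (it omits the justification that $v[\,\cdot\,,A(X)]$ lands in $A(X)$ and the remark about simple graphs), but the argument is the same.
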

\begin{proof}
For every $X\in\pants^0(N)$ and $\alpha\in X$ we have
\[\alpha=\Phi(A)(\alpha)=v\left[A(F(X,\alpha)),A(X)\right]\in A(X),\]
so $X\subseteq A(X)$. But $X$ is a maximal multicurve, so $X=A(X)$.
\end{proof}
\begin{lemma}[Surjectivity] \label{lem:Phi:sur}
    For every nonorienatable surface $N$ with $\chi(N)<0$, 
    \[\Phi\colon \Aut(\pants(N))\to \Aut(\curv(N))\]
    is surjective.
\end{lemma}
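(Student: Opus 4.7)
The plan is to deduce surjectivity of $\Phi$ from Theorem \ref{main:aut_curv}, which asserts that the natural map $\eta\colon\mcg(N)\to\Aut(\curv(N))$ is surjective. The key point is that $\Phi$ intertwines the two natural actions of $\mcg(N)$, i.e.\ the triangle
\[\xymatrix{
\mcg(N)\ar[rr]^{\theta}\ar[rd]_{\eta}&&\Aut(\pants(N))\ar[ld]^{\Phi}\\
&\Aut(\curv(N))&
}\]
commutes. Once this is established, given any $B\in\Aut(\curv(N))$, surjectivity of $\eta$ supplies $f\in\mcg(N)$ with $\eta(f)=B$, and then $A:=\theta(f)\in\Aut(\pants(N))$ is a preimage of $B$ under $\Phi$, proving surjectivity.

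To verify commutativity, fix $f\in\mcg(N)$ and write $A=\theta(f)$, so that $A$ acts on $\pants(N)$ by $X\mapsto f(X)$. For any $X\in\pants^0(N)$ and $\alpha\in X$, the definition of $F(X,\alpha)$ as the subgraph of $\pants(N)$ spanned by pants decompositions containing $X\setminus\{\alpha\}$ gives
\[A\bigl(F(X,\alpha)\bigr)=F\bigl(f(X),f(\alpha)\bigr),\]
since $f$ carries pants decompositions containing $X\setminus\{\alpha\}$ bijectively to those containing $f(X)\setminus\{f(\alpha)\}$. Combining this with Remark \ref{lem:v:marked} yields
\[\Phi(A)(\alpha)=v\bigl[F(f(X),f(\alpha)),f(X)\bigr]=f(\alpha)=\eta(f)(\alpha),\]
so $\Phi\circ\theta=\eta$, and the conclusion follows.

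I do not expect a serious obstacle, since once the naturality of $\Phi$ is noted the argument reduces to an already-stated surjectivity result. The one point worth checking is that invoking Theorem \ref{main:aut_curv} is not circular: its proof (the cited results of Atalan--Korkmaz and Szepietowski, together with Theorem \ref{thm:aut_curv} of Section \ref{sec:aut:curves}) is independent of the machinery developed in Section \ref{sec:aut:pants}, so no circularity arises. Note also that the argument applies uniformly to all nonorientable $N$ with $\chi(N)<0$, including the low-complexity cases such as $N_{1,2}$, $N_{2,1}$ and $N_3$, where both $\pants(N)$ and $\curv(N)$ (and hence the claim) are in any case easy to analyse directly.
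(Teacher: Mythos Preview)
Your proof is correct and follows essentially the same approach as the paper: both establish the commutativity $\Phi\circ\theta=\eta$ by the same direct computation and then invoke the surjectivity of $\eta$ from Theorem~\ref{main:aut_curv}. Your additional remarks on non-circularity are sound and not present in the paper, but the core argument is identical.
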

\begin{proof}
    Consider the diagram
    \begin{equation}\label{diagram}
     \begin{CD}
        \mcg(N) @= \mcg(N)\\
        @V{\theta}VV @VV{\eta}V\\
        \Aut(\pants(N)) @>\Phi>> \Aut(\curv(N))
    \end{CD}   
    \end{equation}
    By Theorem \ref{main:aut_curv}, $\eta$ is surjective, so it is enough to prove that this diagram commutes.

    Let $f\in \mcg(N)$, $\alpha\in \curv^0(N)$ and extend $\alpha$ to 
    \[X=\{\alpha,\alpha_2,\ldots,\alpha_n\}\in \pants^0(N).\]
    Then 
    \[\theta(f)(X)=
    \{f(\alpha),f(\alpha_2),\ldots,f(\alpha_n\}=f(X)\] and
    \[\theta(f)(F(X,\alpha))=F(f(X),f(\alpha)).\]
    By Remark \ref{lem:v:marked}, 
    \[\Phi(\theta(f))(\alpha)=v\left[F(f(X),f(\alpha)),f(X)\right]=f(\alpha)=\eta(f)(\alpha).\]
\end{proof}

\begin{proof}[Proof of Theorem \ref{main:aut_pants}]
Theorem \ref{main:aut_pants} follows from commutativity of the diagram \eqref{diagram}, Theorem \ref{main:aut_curv} and the fact that
$\Phi\colon \Aut(\pants(N))\to \Aut(\curv(N))$ is an isomorphism -- see Theorem~\ref{prop:Phi:iso}.
\end{proof}
\section{Isomorphisms of pants graphs}\label{sec:iso:pants}
In this section we prove that if two nonorientable surfaces have isomorphic pants graphs then they are homeomorphic (Theorem \ref{prop:iso:pants}). 

\medskip
For $A\in\pants^0(N)$ the number $|A^-|$ is called {\it one-sided degree} of $A$ and denoted by $d^-(A)$.
\begin{rem}
The maximum one-sided degree of a vertex of $\pants(N)$ is equal to the genus of $N$.
\end{rem}
\begin{lemma}\label{lem:dminus}
    Let $A\in \pants^0(N)$. There are exactly $d^-(A)$ vertices in $\pants(N)$ connected to $A$ by an edge of type 3.
\end{lemma}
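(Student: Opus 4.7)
The plan is to exhibit a bijection between $A^-$ and the set of vertices of $\pants(N)$ joined to $A$ by an edge of type~3. By the definition in the elementary-move table, any edge of type~3 incident to $A$ replaces a curve $\alpha\in A^-$ by another one-sided curve, so it suffices to prove that for every $\alpha\in A^-$ there is exactly one such edge replacing $\alpha$, and that different curves in $A^-$ yield different target vertices.

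Fix $\alpha\in A^-$ and let $M$ be the non-trivial component of $N_{A\setminus\{\alpha\}}$ containing $\alpha$. The first and only substantive step is to identify $M\cong N_{1,2}$. Since $A$ is a pants decomposition, cutting $M$ along $\alpha$ produces a disjoint union of trivial components, each homeomorphic to $S_{0,3}$. As $\alpha$ is one-sided, cutting $M$ along $\alpha$ adds precisely one new boundary component $\hat\alpha$ (and does not change the Euler characteristic), so that new boundary must be contained in a single pair of pants; hence $M_\alpha\cong S_{0,3}$. Reconstructing $M$ from $M_\alpha$ by gluing a M\"obius band along $\hat\alpha$ yields a nonorientable surface with two boundary components and $\chi=-1$, which is $N_{1,2}$.

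By Proposition~\ref{pantsN12}, the graph $\pants(N_{1,2})$ has exactly two vertices (the two one-sided curves of $N_{1,2}$), joined by a single edge of type~3. Applied to $M$, this gives a unique $\alpha'\in M^-$ distinct from $\alpha$, and hence a unique pants decomposition $A'_\alpha:=(A\setminus\{\alpha\})\cup\{\alpha'\}$ connected to $A$ by an edge of type~3 replacing $\alpha$. For injectivity of the assignment $\alpha\mapsto A'_\alpha$, observe that if $\alpha_1\ne\alpha_2$ belong to $A^-$, then $\alpha_2\in A\setminus\{\alpha_1\}\subseteq A'_{\alpha_1}$, while $\alpha_2\notin A'_{\alpha_2}$, so $A'_{\alpha_1}\ne A'_{\alpha_2}$.

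Counting now gives $d^-(A)$ type-3 edges at $A$. The topological identification of $M$ as $N_{1,2}$ is the only nontrivial ingredient; the remaining steps are immediate from Proposition~\ref{pantsN12} and the definition of a move of type~3, so I do not foresee any genuine obstacle.
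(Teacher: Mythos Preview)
Your proof is correct and follows the same approach as the paper: identify the non-trivial component of $N_{A\setminus\{\alpha\}}$ as $N_{1,2}$ and invoke Proposition~\ref{pantsN12} to get a unique replacement curve. Your write-up is simply more detailed, spelling out the topological identification of $M$ and the injectivity of $\alpha\mapsto A'_\alpha$, both of which the paper leaves implicit.
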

\begin{proof}
For every $\alpha\in A^-$ there is a unique one-sided curve $\alpha'$, disjoint from $A\setminus\{\alpha\}$ and such that $i(\alpha,\alpha')=1$. Indeed, the nontrivial component of the complement of $A\setminus\{\alpha\}$ in $N$ is a two-holed projective plane, and there are only two isotopy classes of nontrivial curves in $N_{1,2}$.    
\end{proof}

\begin{lemma}\label{lem:Farey_number}
If $A$ is vertex of $\pants(N_{g,b})$ such that $|A^-|=g$ then $A$ belongs to exactly $g+b-3$ Farey subgraphs.
\end{lemma}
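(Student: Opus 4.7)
The plan is to count Farey subgraphs of $\pants(N_{g,b})$ containing $A$ via Proposition \ref{lem:Fsubgraph}, which asserts that every such Farey subgraph has the form $F(A,\alpha)$ for a unique $\alpha\in A^+$. So the task reduces to counting those $\alpha\in A^+$ for which $F(A,\alpha)$ is a Farey subgraph, equivalently (by Remark \ref{rem:FAal:char}) those $\alpha$ for which the nontrivial component of $N_{A\setminus\{\alpha\}}$ is $S_{0,4}$ or $S_{1,1}$.

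First I would determine $|A^+|$. By the formula recalled in Section \ref{sec:preli}, with $m=g$, a pants decomposition of $N_{g,b}$ with $g$ one-sided curves has cardinality
\[|A|=\frac{3g+g}{2}+b-3=2g+b-3,\]
so $|A^+|=|A|-|A^-|=g+b-3$. Next I would cut $N$ along $A^-$ in stages: cutting along one one-sided curve replaces $N_{g,b}$ by $N_{g-1,b+1}$, hence cutting along the $g$ curves of $A^-$ yields $\widetilde N:=S_{0,g+b}$. The remaining curves $A^+\subset\widetilde N$ are still pairwise disjoint and two-sided, and because $A$ is a pants decomposition of $N$, the multicurve $A^+$ is a pants decomposition of $\widetilde N$ (it contains the correct number $g+b-3$ of curves).

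Now fix $\alpha\in A^+$. Since $A\setminus\{\alpha\}=A^-\cup(A^+\setminus\{\alpha\})$, cutting $N$ along $A\setminus\{\alpha\}$ is the same as cutting $\widetilde N=S_{0,g+b}$ along $A^+\setminus\{\alpha\}$; in particular the nontrivial component of $N_{A\setminus\{\alpha\}}$ equals the nontrivial component of $(\widetilde N)_{A^+\setminus\{\alpha\}}$. Every simple closed curve in a planar surface is separating, so removing $\alpha$ from the pants decomposition $A^+$ of $\widetilde N$ merges exactly two pairs of pants across $\alpha$, producing $S_{0,4}$. Hence by Remark \ref{rem:FAal:char}(1), $F(A,\alpha)$ is a Farey subgraph for every $\alpha\in A^+$.

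To conclude, I would note the converse direction: no one-sided curve $\alpha\in A^-$ can contribute, since Remark \ref{rem:FAal:char}(3) shows that $F(A,\alpha)$ is then a single edge of type 3, not a Farey subgraph. Combining this with Proposition \ref{lem:Fsubgraph}, the assignment $\alpha\mapsto F(A,\alpha)$ is a bijection between $A^+$ and the set of Farey subgraphs containing $A$, yielding the stated count $|A^+|=g+b-3$. I do not foresee a serious obstacle; the only point requiring care is the identification of the nontrivial components under cutting in stages, which is why I would write out that identification explicitly.
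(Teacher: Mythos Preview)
Your proof is correct and follows essentially the same approach as the paper: both arguments cut $N$ along $A^-$ to obtain $S_{0,g+b}$, observe that the $g+b-3$ curves of $A^+$ are therefore non-special (equivalently, the nontrivial component after removing any one of them is $S_{0,4}$), and invoke Proposition \ref{lem:Fsubgraph} to identify Farey subgraphs through $A$ with elements of $A^+$. Your write-up is simply more explicit than the paper's three-sentence proof; the only minor imprecision is the inductive claim that each cut along a one-sided curve yields $N_{g-1,b+1}$, which is true at intermediate stages (the remaining one-sided curves force nonorientability) but at the final step the result is $S_{0,g+b}$---this is in any case forced by the Euler characteristic computation you implicitly rely on.
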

\begin{proof}
 The complement of $A^-$ in $N_{g,b}$ is a $(g+n)$-holed sphere, so $A^+$ consists of $g+b-3$ non-special two-sided curves.
 If $A$ belongs to a Farey subgraph $F$ then $F=F(A,\alpha)$ for a unique $\alpha\in A^+$ (Proposition \ref{lem:Fsubgraph}).
 It follows that $A$ belongs to exactly $g+n-3$ Farey subgraphs.  
\end{proof}

 \begin{theorem}\label{prop:iso:pants}
    If $N$ and $N'$ are non-orientable surfaces such that $\pants(N)$ is isomorphic to $\pants(N')$ then $N$ and $N'$ are homeomorphic. Furthermore, every isomorphism of pants graphs is induced by a homeomorphism of the underlying surfaces.
 \end{theorem}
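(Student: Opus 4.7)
Let $N=N_{g,b}$ and $N'=N_{g',b'}$ and let $A\colon\pants(N)\to\pants(N')$ be an isomorphism of graphs. My plan is to extract both the genus and the number of boundary components of $N$ from purely graph-theoretic data preserved by $A$, and then reduce the ``furthermore'' clause to the main theorem.

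\textbf{Step 1: Recovering the genus.} By Corollary \ref{cor:type_preserv}, $A$ sends edges of type $3$ to edges of type $3$, so for every vertex $X$ of $\pants(N)$ the number of type-$3$ edges incident to $X$ equals the number of type-$3$ edges incident to $A(X)$. Lemma \ref{lem:dminus} identifies this number with the one-sided degree, so $d^-(A(X))=d^-(X)$ for every $X$. The preliminaries exhibit pants decompositions of $N_{g,b}$ containing exactly $m$ one-sided curves for each $m\equiv g\pmod 2$ with $0\le m\le g$, and $m$ cannot exceed $g$ since disjoint one-sided curves have disjoint M\"obius-band neighbourhoods. Hence the maximum of $d^-$ over the vertices of $\pants(N)$ equals $g$, and similarly for $\pants(N')$ it equals $g'$. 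Since $A$ preserves these maxima, $g=g'$.

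\textbf{Step 2: Recovering the number of boundary components.} Pick a vertex $X\in\pants^0(N)$ with $d^-(X)=g$; then $d^-(A(X))=g=g'$. By Corollary \ref{cor:type_preserv}, $A$ sends edges of type $1$ or $2$ to edges of the same combined class (since it preserves types $3$ and $4$), and by Corollary \ref{A:inv:F:graph} we have $A(F(XY))=F(A(X)A(Y))$ for every edge $XY$. Combining this with the abstract characterization in Proposition \ref{lem:moves_char}, $A$ induces a bijection between the set of Farey subgraphs containing $X$ and the set of Farey subgraphs containing $A(X)$. By Lemma \ref{lem:Farey_number} the first set has $g+b-3$ elements and the second has $g+b'-3$ elements, so $b=b'$. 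Therefore $N$ and $N'$ are homeomorphic.

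\textbf{Step 3: Realizing $A$ by a homeomorphism.} Fix any homeomorphism $h\colon N\to N'$ (which exists by Steps 1 and 2) and let $h_*\colon\pants(N)\to\pants(N')$ be the induced isomorphism. Then $h_*^{-1}\circ A$ lies in $\Aut(\pants(N))$, so by Theorem \ref{main:aut_pants}(1) it equals $\theta(f)$ for some $f\in\mcg(N)$. Choosing a homeomorphism $\tilde f$ representing $f$, we conclude that $A=h_*\circ\theta(f)$ is induced by the homeomorphism $h\circ\tilde f\colon N\to N'$.

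The technical heart of the argument is entirely contained in Steps 1 and 2; the only subtlety is to argue that Farey subgraphs form an $A$-invariant family, which follows from combining Corollary \ref{cor:type_preserv}, Corollary \ref{A:inv:F:graph}, and the intrinsic characterization in Proposition \ref{lem:moves_char}. Step 3 is then a formal consequence of the main theorem.
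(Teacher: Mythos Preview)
Your proof is correct and follows essentially the same approach as the paper: recover $g$ from the maximal one-sided degree via Lemma \ref{lem:dminus} and Corollary \ref{cor:type_preserv}, recover $b$ from the number of Farey subgraphs through a vertex of maximal one-sided degree via Lemma \ref{lem:Farey_number}, and deduce the ``furthermore'' from Theorem \ref{main:aut_pants}. The only minor remark is that in Step 2 you work harder than necessary: by Definition \ref{def:subgraph} a Farey subgraph is simply any subgraph isomorphic to the Farey graph, so invariance under a graph isomorphism is immediate without invoking Corollary \ref{A:inv:F:graph} or Proposition \ref{lem:moves_char}.
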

 \begin{proof}
     Let $N=N_{g,b}$, $N'=N_{g',b'}$ and let $\varphi\colon\pants(N)\to\pants(N')$ be an isomorphism. Since $\varphi$ preserves type of edges (Corollary \ref{cor:type_preserv}), it also preserves one-sided degree of vertices by Lemma \ref{lem:dminus}. Let $A$ be a vertex of $\pants(N)$ of maximum one-sided degree. Then $\varphi(A)$ is also a vertex of maximum one-sided degree in  $\pants(N')$ and
     \[g=d^-(A)=d^-(\varphi(A))=g'.\] 
     By Lemma \ref{lem:Farey_number}, $A$ belongs to $g+b-3$ Farey subgraphs, whereas $\varphi(A)$ belongs to $g'+b'-3$ Farey subgraphs. Since $\varphi$ maps Farey subgraphs to Farey subgraphs, we have
     \[g+b-3=g'+b'-3,\]
     which implies $b=b'$.

     The second part of the statement follows from Theorem \ref{main:aut_pants}.
 \end{proof}
\section{Automorphisms of $\curv(N)$} \label{sec:aut:curves}
The purpose of this section is to complete the description of $\Aut(\curv(N))$ given in \cite{AK} by proving the following result.
\begin{theorem}\label{thm:aut_curv}
    Let $N=N_{g,4-g}$, where $1\le g\le 4$. The natural map \[\mcg(N)\to\Aut(\curv(N))\] is an isomorphism.
\end{theorem}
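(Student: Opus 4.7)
The case $N = N_{1,3}$ is treated in \cite{Szep}, so the new work is for the three surfaces $N \in \{N_{2,2}, N_{3,1}, N_4\}$. In each case the plan is the standard Ivanov--Atalan--Korkmaz scheme: since Theorem~\ref{main:aut_curv}(2) asserts trivial kernel for these three surfaces, only surjectivity is at issue, and I would reduce it to the claim that any $\varphi \in \Aut(\curv(N))$ fixing a pants decomposition is the identity.

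First I would verify that any such $\varphi$ preserves the topological type of vertices. The simplicial link $\Lk_{\curv(N)}(\alpha)$ is identified with $\curv(N_\alpha)$ (with the usual low-complexity modifications giving Farey graphs when $N_\alpha$ contains an $S_{0,4}$, $S_{1,1}$, or $N_{1,2}$ component), so the topological type of $\alpha$ is detected by combinatorial invariants of its link: dimension, presence of Farey subgraphs, local valence structure, connectedness of certain subcomplexes, and so on. For $N_{3,1}$ and $N_4$ the topological types include one-sided nonseparating, two-sided nonseparating of either orientable or nonorientable complement, and separating curves distinguished by the homeomorphism type of the two pieces; the link computation is case-by-case but mechanical.

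Next, by the change-of-coordinates principle $\mcg(N)$ acts transitively on curves of each topological type, so after composition with a mapping class we may assume $\varphi$ fixes a chosen curve $\alpha$. Then $\varphi$ restricts to an automorphism of $\Lk(\alpha) \cong \curv(N_\alpha)$, and I would appeal to Theorem~\ref{main:aut_curv} for $N_\alpha$ to realize this induced map by a mapping class of $N_\alpha$, which extends to a mapping class of $N$ fixing $\alpha$. Because the links may themselves involve surfaces under consideration, the three cases must be ordered: treat $N_{2,2}$ first (its links reduce to surfaces with $g+b \le 3$, already handled in \cite{AK}), then $N_{3,1}$ (whose links can involve $N_{2,2}$, now in hand), and finally $N_4$ (whose links can involve $N_{3,1}$). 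Iterating this argument along a chain of disjoint curves extending $\alpha$ to a pants decomposition, one forces $\varphi$ to fix every vertex of some pants decomposition, after which a standard combinatorial step using the maximality of pants decompositions and the adjacency relations in $\curv(N)$ concludes that $\varphi = \mathrm{id}$.

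The chief obstacle is the topological-type step for the closed surface $N_4$: several types of separating two-sided curves coexist (with complements $N_{1,1}\sqcup N_{3,1}$ and $N_{2,1}\sqcup N_{2,1}$), and one must distinguish them combinatorially from the link as well as from the various nonseparating types, without the help of a boundary component to pin down subsurfaces. A secondary subtlety is that for $N_{2,2}$ many links are Farey graphs, whose automorphism group is large, so transitivity alone does not determine the induced automorphism on the link; a more refined argument exploiting how Farey subgraphs glue together inside $\curv(N_{2,2})$ would be required to complete the rigidity step there.
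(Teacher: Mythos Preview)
Your outline follows the same Ivanov--Atalan--Korkmaz scheme the paper uses, and the induction order you propose ($N_{2,2}$, then $N_{3,1}$, then $N_4$) is exactly the paper's induction on genus. Type preservation need not be reproved: it is already Theorem~5.1 of \cite{AK} and the paper simply cites it. The substantive difference is that the paper anchors the induction on \emph{essential one-sided} curves $\alpha$ (those with $N_\alpha$ nonorientable) rather than on arbitrary curves. For such $\alpha$ one has $N_\alpha\cong N_{g-1,5-g}$, which lies again in the list under consideration, so the induction hypothesis applies cleanly and the Farey-link obstacle you identify for $N_{2,2}$ never arises.

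Two gaps in your sketch are handled explicitly in the paper. First, extending the mapping class $f'_\alpha$ of $N_\alpha$ to a class of $N$ is not automatic: one must check that $f'_\alpha$ sends the boundary component of $N_\alpha$ coming from $\alpha$ to the corresponding boundary component of $N_{A(\alpha)}$, equivalently $f'_\alpha(\partial N)=\partial N$. The paper verifies this case-by-case (its Step~1) using auxiliary separating curves and type preservation. Second, your final move---``fixing a pants decomposition forces the identity by a standard combinatorial step''---is not standard here and is where the paper does real work. After gluing the local classes $f_\alpha$ into a single $f$ via connectivity of the graph of essential one-sided curves (Lemma~\ref{AK:X}), one only knows $f=A$ on curves lying in the star of some essential one-sided curve. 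Curves $\alpha$ with $N_\alpha$ orientable, or separating off an $S_{1,1}$, are not covered; the paper treats these by ad hoc arguments, e.g.\ exhibiting curves $\beta,\gamma,\delta$ already controlled with $\Lk(\beta)\cap\Lk(\gamma)\cap\Lk(\delta)=\{\alpha\}$.
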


The \emph{star} of a vertex $\alpha\in\curv^0(N)$, denoted by $\St(\alpha)$ is the subcomplex of $\curv(N)$ consisting of all simplices  containing $\alpha$ and all faces of such simplices. The \emph{link} of $\alpha$, denoted by 
    $\Lk(\alpha)$ is the subcomplex of $\St(\alpha)$ consisting of all simplices of $\St(\alpha)$
not containing $\alpha$. We have $\Lk(\alpha)\cong\curv(N_\alpha)$.
Following \cite{AK}, we say that a one-sided curve $\alpha$ is \emph{essential} if either $g=1$ or $N_\alpha$ is nonorientable. We denote by $\curv^0_e(N)$ the set of essential one-sided vertices of $\curv(N)$, and for $g\ge 2$ we denote by $\curv^1_e(N)$ the subgraph of $\curv^1(N)$ spanned by $\curv^0_e(N)$. 

\begin{lemma}\label{AK:X}
    For $g\ge 2$ and $b\ge 0$ the graph  $\curv^1_e(N)$ is connected.
\end{lemma}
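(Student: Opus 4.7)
The plan is to prove the lemma by induction on the geometric intersection number $i(\alpha,\beta)$ of two essential one-sided curves $\alpha,\beta\in\curv^0_e(N)$. The base case is $i(\alpha,\beta)=0$: then $\{\alpha,\beta\}$ is an edge of $\curv^1_e(N)$ by definition. So I may assume $i(\alpha,\beta)\ge 1$ and, inductively, that the conclusion holds for all pairs of essential one-sided curves with strictly smaller intersection number.

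For the inductive step I would produce an auxiliary essential one-sided curve $\gamma$ that is disjoint from $\alpha$ (so that $\{\alpha,\gamma\}$ is an edge of $\curv^1_e(N)$) and satisfies $i(\gamma,\beta)<i(\alpha,\beta)$; then the inductive hypothesis supplies a path from $\gamma$ to $\beta$ in $\curv^1_e(N)$, which combined with the edge $\{\alpha,\gamma\}$ gives the required path from $\alpha$ to $\beta$. To construct $\gamma$, cut $N$ along $\alpha$; essentialness of $\alpha$ together with $g\ge 2$ forces $N_\alpha\cong N_{g-1,b+1}$ to be nonorientable. In $N_\alpha$ the curve $\beta$ becomes a system of $i(\alpha,\beta)$ disjoint properly embedded arcs $a_1,\ldots,a_k$ with endpoints on the boundary component $\partial_\alpha$ that double-covers $\alpha$. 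A Hatcher-style surgery, band-summing an arc $a_i$ with a one-sided simple closed curve $\gamma_0\subset N_\alpha$ along a band $B$ disjoint from the remaining arcs, produces a one-sided simple closed curve $\gamma$ in $N_\alpha$. Since $\gamma\subset N_\alpha$, it is disjoint from $\alpha$; it is one-sided in $N$ because sidedness is detected by a regular neighbourhood lying in $N_\alpha$; it is essential in $N$ because $N_\gamma$ contains the M\"obius band regular neighbourhood of $\alpha$ in $N$ and is therefore nonorientable; and it meets $\beta$ strictly fewer than $k$ times by the choice of band.

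I expect the main obstacle to be ensuring that both $\gamma_0$ and the band $B$ can be chosen disjoint from the arcs $a_1,\ldots,a_k$ (or from all but the arc used in the band-sum). An Euler-characteristic and orientability bookkeeping argument for the surface obtained from $N_\alpha$ by splitting along the arcs shows that in higher-complexity situations at least one component is nonorientable and admits the required one-sided curve, from which the band-sum can be performed. However, for the surfaces of interest in this paper, $N_{2,2}$, $N_{3,1}$ and $N_{4,0}$, the cut surface $N_\alpha$ is one of $N_{1,3}$, $N_{2,2}$ or $N_{3,1}$, and the arcs of $\beta\cap N_\alpha$ may occupy essentially all of the available room. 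These low-complexity situations will be dispatched by explicit case analysis, drawing on the descriptions of $\curv(N_{1,b+1})$ and $\pants(N_{2,1})$ recorded in Propositions~\ref{pantsN12} and~\ref{pantsN21} and on the results of \cite{AK} and \cite{Sch} for surfaces with small $g+b$.
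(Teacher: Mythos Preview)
Your approach differs substantially from the paper's. The paper does not induct on $i(\alpha,\beta)$; it uses the graph $X(N)$ from \cite{AK}, which has the same vertex set $\curv^0_e(N)$ but with edges given by $i(\alpha,\beta)=1$ rather than $0$. The whole argument is then the observation that if $\alpha,\beta\in\curv^0_e(N)$ satisfy $i(\alpha,\beta)=1$, the complement of a regular neighbourhood of $\alpha\cup\beta$ in $N$ is nonorientable and hence contains an essential one-sided curve $\gamma$ disjoint from both; thus every edge of $X(N)$ becomes a length-$2$ path in $\curv^1_e(N)$, and connectivity follows from the description of the connected components of $X(N)$ already given in \cite{AK}. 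Your route, if completed, would be more self-contained but considerably longer.

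More importantly, your surgery step has a gap. The band-sum of a properly embedded arc $a_i$ with a simple closed curve $\gamma_0$ along a band $B$ is a $1$-manifold with the same boundary as $a_i\cup\gamma_0$, namely the two endpoints of $a_i$ on $\partial_\alpha$; the result is an arc, not a simple closed curve, so your $\gamma$ is not defined. If instead you mean to locate a one-sided $\gamma_0\subset N_\alpha$ already disjoint from all of $a_1,\dots,a_k$, then $\gamma_0$ itself has $i(\gamma_0,\beta)=0$ and no band-sum is needed---but the existence of such a $\gamma_0$ is precisely what must be shown, and your Euler-characteristic sketch does not establish it (cutting $N_\alpha$ along all the arcs of $\beta$ yields the complement of $\alpha\cup\beta$ in $N$, and you have not ruled out that this complement is orientable or consists of pieces too small to carry nontrivial one-sided curves). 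The promised low-complexity case analysis is likewise not carried out. An induction on intersection number can be made to work here, but it requires a different construction of $\gamma$ than the one you have written down.
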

\begin{proof}
    We use the graph $X(N)$ defined in \cite{AK}, which has $\curv^0_e(N)$ as the vertex set and two vertices $\alpha,\beta$ are joined by en edge in $X(N)$ if and only if $i(\alpha,\beta)=1$. If $\alpha,\beta\in\curv^0_e(N)$ are such that $i(\alpha,\beta)=1$ then the complement of a regular neighbourhood of $\alpha\cup\beta$ in $N$ is nonorientable, and so there is an essential one-sided curve $\gamma$ disjoint from $\alpha$ and $\beta$. It follows that if two vertices are connected by a path in  $X(N)$ then they are also connected by a path in  $\curv^1_e(N)$. From this observation and the description of connected components of $X(N)$ given in \cite{AK} it follows immediately that 
    $\curv^1_e(N)$ is connected.
\end{proof}

\begin{prop}[Theorem 5.1 of \cite{AK}]\label{AK:top_type}
If $\alpha\in\curv^0(N)$ and $A\in\Aut(\curv(N))$ then $\alpha$ and $A(\alpha)$ are in the same $\mcg(N)$-orbit.
 \end{prop}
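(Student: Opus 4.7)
The plan is to show that the $\mcg(N)$-orbit of $\alpha\in\curv^0(N)$ is determined by combinatorial data intrinsic to the pair $(\curv(N),\alpha)$, so that it is automatically preserved by any $A\in\Aut(\curv(N))$. The $\mcg(N)$-orbit of $\alpha$ is classified by its sidedness together with the homeomorphism type of the complementary surface $N_\alpha$ (and, for two-sided separating curves, by how the components of $N_\alpha$ are embedded in $N$). I would proceed by induction on $-\chi(N)$, so that the same statement, together with the rigidity principle that $\curv(N')\cong\curv(N'')$ implies $N'\cong N''$, is available for every nonorientable subsurface $N'$ appearing as a complement.

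The first substantive step is to show that $A$ preserves sidedness. Here Lemma~\ref{AK:X} is the key ingredient: the essential one-sided vertices form a connected subgraph $\curv^1_e(N)$, and one can characterise $\curv^0_e(N)\subseteq\curv^0(N)$ intrinsically via the admissible cardinalities of pants decompositions extending $\alpha$, using the formula $\tfrac{3g+m}{2}+b-3$ and the parity constraint $m\equiv g\pmod 2$; a one-sided vertex forces $m\ge 1$, producing a dimension spectrum in the link $\Lk(\alpha)\cong\curv(N_\alpha)$ that differs from the spectrum available to any two-sided vertex of the same topological complexity. Non-essential one-sided vertices and two-sided vertices cutting off different small pieces are then distinguished by how stars of neighbouring vertices intersect $\St(\alpha)$. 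Once sidedness is invariant under $A$, the restriction $A\colon\Lk(\alpha)\to\Lk(A(\alpha))$ is an isomorphism $\curv(N_\alpha)\to\curv(N_{A(\alpha)})$, and applying the inductive hypothesis (componentwise if $N_\alpha$ is disconnected) yields $N_\alpha\cong N_{A(\alpha)}$.

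Because $\mcg(N)$ acts transitively on isotopy classes of curves sharing the same sidedness and the same homeomorphism type of complement, this forces $\alpha$ and $A(\alpha)$ into the same $\mcg(N)$-orbit, completing the induction. The main obstacle is the base of the induction together with the sporadic configurations in which $N_\alpha$ involves a small piece such as $S_{0,3}$, $S_{1,1}$, $S_{0,4}$ or $N_{1,2}$: in these exceptional cases $\Lk(\alpha)$ is empty, a single edge, or a Farey graph, and the topological type of $N_\alpha$ cannot be recovered from $\Lk(\alpha)$ alone. Resolving them requires direct combinatorial analysis inside $\curv(N)$, distinguishing orbits by how pencils of mutually disjoint curves meet $\St(\alpha)$ and by the way the low-complexity links embed into the ambient complex; this is where the delicate work of \cite{AK} is concentrated, and where one must take particular care for the surfaces $N\in\{N_{2,2},N_{3,1},N_4\}$ that remain to be treated in Theorem~\ref{thm:aut_curv}.
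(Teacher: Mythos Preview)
The paper does not prove this proposition at all: it is quoted verbatim as Theorem~5.1 of \cite{AK} and used as a black box in the proof of Theorem~\ref{thm:aut_curv}. So there is no ``paper's own proof'' to compare your attempt against.

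Your outline is a plausible high-level strategy, and it is in the spirit of \cite{AK}, but it is not the route they actually take, and it has a structural weakness. You propose to recover the homeomorphism type of $N_\alpha$ from $\Lk(\alpha)\cong\curv(N_\alpha)$ via an inductive rigidity principle (``$\curv(N')\cong\curv(N'')\Rightarrow N'\cong N''$''). But rigidity of curve complexes is normally obtained as a \emph{consequence} of the automorphism theorem $\Aut(\curv)\cong\mcg$, which in turn rests on exactly the orbit-preservation statement you are trying to prove. Your induction would have to carry both statements simultaneously and bootstrap them together; you have not explained how to do this, and the base cases (where $\Lk(\alpha)$ is empty, a single edge, or a Farey graph, and where several non-homeomorphic surfaces share the same curve complex) are precisely where the scheme is most fragile. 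Your invocation of Lemma~\ref{AK:X} is also misplaced: connectivity of $\curv^1_e(N)$ does not by itself give an intrinsic characterisation of $\curv^0_e(N)$.

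By contrast, the proof in \cite{AK} avoids this circularity. It characterises each $\mcg(N)$-orbit directly by combinatorial invariants read off inside $\curv(N)$ itself: first separating versus nonseparating (via connectivity of the link), then one-sided versus two-sided (via the dimensions of maximal simplices through $\alpha$ and adjacency patterns), and then, among separating curves, the genera and orientability of the two sides (via the existence of particular configurations of disjoint curves). No appeal to rigidity of smaller curve complexes is needed. What you have written is closer to a heuristic explanation of why the result ought to be true than to a proof; as you yourself note, the substance lies in the case analysis you defer to \cite{AK}.
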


We are ready to prove the main result of this section.
 \begin{proof}[Proof of Theorem \ref{thm:aut_curv}]
 We use induction on the genus of $N$. For $g=1$ the theorem is true by \cite{Szep}.
 Let $g\ge 2$ and assume the theorem is true for surfaces of lower genus. Let $A\in\Aut(\curv(N))$. We need to show that there is a unique
 $f\in\mcg(N)$ such that $f(\alpha)=A(\alpha)$ for every $\alpha\in\curv^0(N)$. We break the proof into three steps.

 \medskip
 \noindent{\bf Step 1.} {\it For every $\alpha\in\curv^0_e(N)$ there is a unique
 $f_\alpha\in\mcg(N)$ such that $f_\alpha(\beta)=A(\beta)$ for every $\beta\in\St(\alpha)$.}

 \medskip\noindent
Let $\alpha\in\curv^0_e(N)$. By Proposition \ref{AK:top_type}, we have $A(\alpha)\in\curv^0_e(N)$
\[N_{A(\alpha)}\cong N_\alpha\cong N_{g-1,5-g}.\]
By induction hypothesis there is a unique
$f'_\alpha\colon N_\alpha\to N_{A(\alpha)}$ such that $f'_\alpha(\beta)=A(\beta)$ for every $\beta\in\Lk(\alpha)=\curv(N_\alpha)$.
We need to show that $f'_\alpha$ can be extended over $N$ to a unique $f_\alpha\in\mcg(N)$ such that $f_\alpha(\alpha)=A(\alpha)$. It suffices to show that $f'_\alpha$ maps the boundary component of $N_\alpha$ corresponding to $\alpha$ to the boundary component of $N_{A(\alpha)}$ corresponding to $A(\alpha)$, which is equivalent to $f'_\alpha(\partial N)=\partial N$. There is nothing to prove if $g=4$, since then $\partial N=\emptyset$.

Assume $g=2$ and let $\beta$ be a separating curve disjoint from $\alpha$ such that one of the components of $N_\beta$, say $M$, is a pair of pants containing $\partial N$ (Figure \ref{F1}). 
 \begin{figure}[h]
\begin{center}
\includegraphics[width=0.83\customwidth]{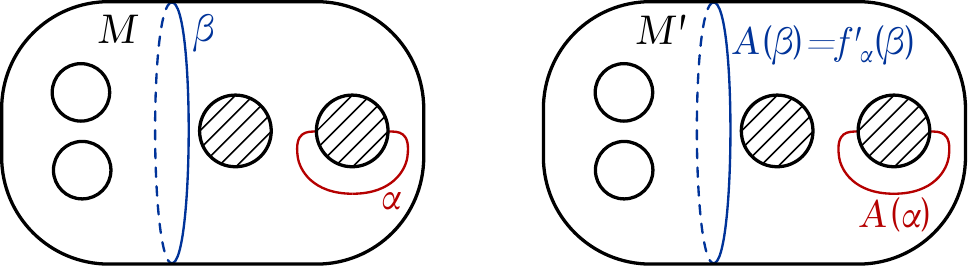} 
\caption{Curves in $N_{2,2}$ from the proof of Step 1.}\label{F1} %
\end{center}
\end{figure}
By Proposition~\ref{AK:top_type}, $A(\beta)$ is also a separating curve disjoint from $A(\alpha)$ such that one of the components of $N_{A(\beta)}$, say $M'$, is a pair of pants containing $\partial N$. Observe that $M$ (respectively $M'$) is the unique orientable component of $N_{\{\alpha,\beta\}}$ (respectively $N_{\{A(\alpha),A(\beta)\}}$).
Since $A(\beta)=f'_\alpha(\beta)$ we have $f'_\alpha(M)=M'$ and $f'_\alpha(\partial N)=\partial N$.

Finally assume $g=3$ and let $\beta$ be a separating curve disjoint from $\alpha$ such that both components of $N_\beta$ are nonorientable and $\alpha$ is contained in the genus $2$ component of $N_\beta$. Let $M$ be the genus $1$ component of $N_\beta$, $\gamma$ a one-sided curve in $M$, and $\delta$ the unique two-sided nonseparating curve in $N\setminus M$ (Figure \ref{F2}). 
\begin{figure}[h]
\begin{center}
\includegraphics[width=0.83\customwidth]{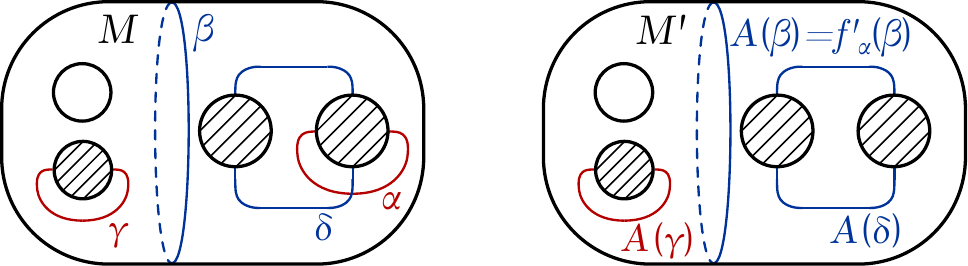} 
\caption{Curves in $N_{3,1}$ from the proof of Step 1.}\label{F2} %
\end{center}
\end{figure}
By Proposition \ref{AK:top_type}, $A(\beta)$ is also a separating curve disjoint from $A(\alpha)$ such that both components of $N_{A(\beta)}$ are nonorientable. Let $M'$ be the genus $1$ component of $N_{A(\beta)}$ which must contain $\partial N$. By Proposition \ref{AK:top_type}, $A(\delta)$ is a two-sided nonseparating curve disjoint form $A(\beta)$, so it must be contained in $N\setminus M'$. Furthermore, $A(\delta)$ fills $N\setminus M'$ which implies that $A(\gamma)$ is contained in $M'$. Since $A(\beta)=f'_\alpha(\beta)$ and $A(\gamma)=f'_\alpha(\gamma)$ we have $f'_\alpha(M)=M'$ and $f'_\alpha(\partial N)=\partial N$.

 \medskip
 \noindent{\bf Step 2.} {\it For every $\alpha,\beta\in\curv^0_e(N)$ we have $f_\alpha=f_\beta$.}

 \medskip\noindent
Since $\curv^1_e(N)$ is connected by Lemma \ref{AK:X} we can assume that $\alpha$ and $\beta$ are disjoint.
Then 
\[f_\alpha(\alpha)=A(\alpha)=f_\beta(\alpha),\quad f_\alpha(\beta)=A(\beta)=f_\beta(\beta).\]
Let $h=f_\alpha\circ f^{-1}_\beta$, $M=N_{\{\alpha,\beta\}}$ and $h'=h|_M$. Observe that $h'$ fixes every vertex of $\curv(M)=\Lk(\{\alpha,\beta\})$ because $f_\alpha(\gamma)=f_\beta(\gamma)=A(\gamma)$ for every $\gamma\in\Lk(\{\alpha,\beta\})$. If $M$ is nonorientable then $M\cong N_{g-2,6-g}$ and $h'=id_M$ by the induction hypothesis. If $M$ is orientable then either $g=2$ and $M\cong S_{0,4}$, or $g=4$ and $M\cong S_{1,2}$.  Let $H$ be the kernel of the map $\mcg(M)\to\Aut(\curv(M))$, which is generated by hyperelliptic involutions, $H\cong\Z_2\oplus\Z_2$ if $M\cong S_{0,4}$, or $H\cong\Z_2$ if $M\cong S_{1,2}$, see \cite{Luo}.
We know that $H$ acts faithfully on the components of $\partial M$, and in the case of $S_{0,4}$ it acts by even permutations. Since $h'$ fixes the two components of $\partial M$ corresponding to $\alpha$ and $\beta$, we have $h'=id_M$. It follows that $h=id$.

\medskip
 \noindent
 After Step 2 we know that there is a unique $f\in\mcg(N)$ such that for every $\beta\in\curv^0_e(N)$ and $\alpha\in\St(\beta)$ we have $f(\alpha)=A(\alpha)$.
 
\medskip
 \noindent{\bf Step 3.} {\it For every $\alpha\in\curv^0(N)$ we have $f(\alpha)=A(\alpha)$.}

 \medskip
 \noindent
 We know that $f(\alpha)=A(\alpha)$ if either
 \begin{itemize}
     \item[(1)] $N_\alpha$ is connected and nonorientable, or
     \item[(2)] $N_\alpha$ has two nonorientable components, or
     \item[(3)] $N_\alpha$ has a nonorientable component of genus at least $2$.
 \end{itemize}
 Indeed, in each of these cases there is $\beta\in\curv^0_e(N)$ such that $\alpha\in\St(\beta)$. It remains to consider the following cases.
\begin{itemize}
     \item[(4)] $N_\alpha$ is connected and orientable.
     \item[(5)] $N_\alpha\cong S_{1,1}\sqcup N_{1,2}$.
\end{itemize}
Suppose $N_\alpha$ is connected and orientable. Let $\beta$ be any curve in $N$ such that $i(\alpha,\beta)=1$. Observe that $\beta$ is one-sided and essential, so for every $\gamma\in\St(\beta)$ we have $A(\gamma)=f(\gamma)$. By \cite[Lemma 7.5]{AK} we also have 
$A(\gamma)=f(\gamma)$ for every $\gamma\in\curv^0(N)$ such that $i(\beta,\gamma)=1$, so in particular
$A(\alpha)=f(\alpha)$.



Finally suppose $N_\alpha\cong S_{1,1}\sqcup N_{1,2}$. Consider the curves $\beta$, $\gamma$ and $\delta$ in Figure \ref{F5}. 
\begin{figure}[h]
\begin{center}
\includegraphics[width=0.38\customwidth]{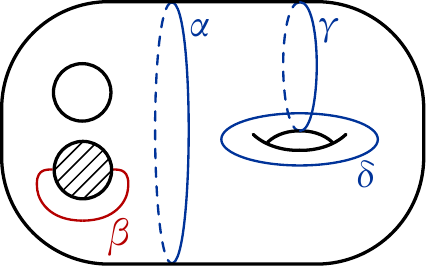} 
\caption{Curves in $N_{3,1}$ from case 5.}\label{F5} %
\end{center}
\end{figure}
Observe that
$\Lk(\beta)\cap\Lk(\gamma)\cap\Lk(\delta)=\{\alpha\}$. We have $f(\beta)=A(\beta)$ by case (4) and $f(\gamma)=A(\gamma)$, $f(\delta)=A(\delta)$  by case (1),  which implies
\begin{align*}
  \{f(\alpha)\}&=\Lk(f(\beta))\cap\Lk(f(\gamma))\cap\Lk(f(\delta))\\
  &=\Lk(A(\beta))\cap\Lk(A(\gamma))\cap\Lk(A(\delta))=\{A(\alpha)\}.  
\end{align*}
\end{proof}

\begin{rem}
Our proof of Theorem \ref{thm:aut_curv} is similar to the proof of the main result of \cite{AK}. In fact Step 1 of our proof is the same as in \cite[Theorem 7.7]{AK} (although more detailed in the case $n=1$). The main difference is in Step 2, where we use connectivity of the graph $\curv^1_e(N)$, which requires the assumption $g\ge 2$. We can assume $g\ge 2$ because the case of $N_{1,3}$ was settled in \cite{Szep} by a different argument. The proof in \cite{AK} is based on  connectivity of a different graph and it works also for $g=1$.
\end{rem}
\section{The graph $\pants(N_3)$.} \label{sec:N30}
In this section we focus on the case $N=N_3$. We describe the structure of the pants graph $\pants(N)$ and construct an abstract graph isomorphic to it. Additionally, we  define a simply-connected 2-dimensional complex $\overline{\pants(N)}$, whose
      1-skeleton is $\pants(N)$, and such that $\Aut(\overline{\pants(N)})=\Aut(\pants(N))$.

\subsection{Curves on $N$.}
There is a unique (up to isotopy) one-sided curve $\alpha_0$ in $N$ such that $N_{\alpha_0}\cong S_{1,1}$, see \cite{Sch}. 
The remaining curves in $N$ can be partitioned into two disjoint subsets:
\[\curv^0(N)\setminus\{\alpha_0\}=\curv^+\cup\curv^-,\]
where $\curv^+$ consists of  two-sided curves, and  $\curv^-$  consisting of  one-sided curves whose  complement is nonorientable.
    
For every $\alpha\in\curv^0(N)$, $\alpha\ne \alpha_0$ 
\[
N_{\alpha}\cong\begin{cases}
N_{1,2}&\alpha\in\curv^+\\
N_{2,1}&\alpha\in\curv^-
\end{cases}\qquad
i(\alpha,\alpha_0)=\begin{cases}
0&\alpha\in\curv^+\\
1&\alpha\in\curv^-.
\end{cases}
\]
\begin{lemma}\label{N30:bijection}
 There exists a bijection $\varphi\colon\curv^-\to\curv^+$ such that for $\alpha\in\curv^-$ and $\beta\in\curv^+$    
 \[i(\alpha,\beta)=0\iff \beta=\varphi(\alpha).\]
\end{lemma}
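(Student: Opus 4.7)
My plan is to set $\varphi(\alpha)$, for $\alpha\in\curv^-$, equal to the (up to isotopy) unique nontrivial two-sided simple closed curve in $N_\alpha\cong N_{2,1}$. Well-definedness comes from Proposition~\ref{pantsN21}: $\pants(N_{2,1})\cong F_\infty$ has a unique vertex of infinite degree, namely the singleton pants decomposition of the unique non-separating two-sided curve of the one-holed Klein bottle; an Euler characteristic bookkeeping then rules out any other nontrivial two-sided class (a separating two-sided curve in $N_{2,1}$ would have to bound either a pair of pants or a Möbius band, hence be trivial). This curve is two-sided and nontrivial in $N$, so it belongs to $\curv^+$, giving a well-defined map $\varphi\colon\curv^-\to\curv^+$.

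Next I would verify the equivalence. The forward direction $\beta=\varphi(\alpha)\Rightarrow i(\alpha,\beta)=0$ is built into the construction. For the converse, suppose $\alpha\in\curv^-$, $\beta\in\curv^+$ and $i(\alpha,\beta)=0$. Then $\beta$ descends to a nontrivial two-sided curve in $N_\alpha\cong N_{2,1}$ which is not isotopic to $\partial N_\alpha$ (the boundary bounds the Möbius band regular neighbourhood of $\alpha$ in $N$, so a curve isotopic to it would be trivial in $N$, contradicting $\beta\in\curv^+$). The uniqueness established above then forces $\beta=\varphi(\alpha)$.

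To conclude bijectivity I would produce $\varphi^{-1}$ explicitly. Given $\beta\in\curv^+$, we have $N_\beta\cong N_{1,2}$, and Proposition~\ref{pantsN12} tells us that $N_{1,2}$ contains exactly two isotopy classes of nontrivial curves, both one-sided. Since the table immediately preceding the lemma gives $i(\alpha_0,\beta)=0$, one of these two curves is $\alpha_0$; call the other $\alpha$. Then $\alpha$ is a one-sided curve different from $\alpha_0$, so by the characterisation of $\alpha_0$ as the unique one-sided curve with orientable complement, $\alpha\in\curv^-$. The equivalence just proved gives $\varphi(\alpha)=\beta$ and simultaneously shows $\alpha$ is the only preimage of $\beta$.

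The only slightly delicate point I expect is the uniqueness of a nontrivial two-sided curve in $N_{2,1}$ (needed both for well-definedness and for the $(\Leftarrow)$ direction of the equivalence), but this is either read off from Proposition~\ref{pantsN21} or handled by the routine Euler characteristic argument indicated above; everything else is simply tracking how curves behave after cutting along $\alpha$ or $\beta$.
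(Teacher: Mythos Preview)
Your proposal is correct and follows essentially the same approach as the paper: define $\varphi(\alpha)$ as the unique two-sided curve in $N_\alpha\cong N_{2,1}$, and build $\varphi^{-1}$ by picking out, among the two one-sided curves in $N_\beta\cong N_{1,2}$, the one different from $\alpha_0$. You are simply more explicit than the paper in justifying uniqueness (via Proposition~\ref{pantsN21} and the Euler characteristic remark) and in checking that the resulting curve lands in $\curv^+$.
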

\begin{proof}
    For $\alpha\in\curv^-$, since $N_\alpha\cong N_{2,1}$, there exists a unique two-sided curve $\beta$ such that $i(\alpha,\beta)=0$. We define $\varphi(\alpha)=\beta$. To prove that $\varphi$ is a bijection, we construct its inverse $\varphi^{-1}\colon\curv^+\to\curv^-$. For $\beta\in\curv^+$, since $N_\alpha\cong N_{1,2}$, there exist two one-sided curves disjoint from $\beta$. One of them is $\alpha_0$, and  the other one is $\varphi^{-1}(\beta)$.
\end{proof}
\begin{lemma}\label{N30:index}
 For $\alpha,\alpha'\in\curv^-$     
 \[i(\alpha,\alpha')=0\iff i(\varphi(\alpha),\varphi(\alpha'))=1.\]
\end{lemma}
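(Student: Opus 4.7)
I would prove both implications simultaneously by combining the $\mcg(N_3)$-equivariance of $\varphi$ with a change-of-coordinates argument and a single model computation.

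Since $\varphi(\alpha)$ is defined intrinsically (the unique two-sided curve in $N_3$ disjoint from $\alpha$), the map $\varphi$ is $\mcg(N_3)$-equivariant. Next I would classify two relevant orbits of pairs. First, for two distinct disjoint $\alpha, \alpha' \in \curv^-$, the complement $N_3 \setminus (\alpha \cup \alpha')$ is a connected non-orientable surface with Euler characteristic $-1$ and two new boundary components, hence homeomorphic to $N_{1,2}$; the change-of-coordinates principle then forces all such pairs into a single $\mcg(N_3)$-orbit. Second, for two distinct $\beta, \beta' \in \curv^+$ with $i(\beta, \beta') = 1$, both curves are two-sided and non-separating inside a regular neighborhood $N(\beta \cup \beta')$ of their union; since $N_{2,1}$ admits only one isotopy class of two-sided non-separating curves, this forces $N(\beta \cup \beta') \cong S_{1,1}$, whose complement in $N_3$ (Euler characteristic $0$, one boundary component) must be a M\"obius band $N_{1,1}$. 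Hence all such pairs also form one $\mcg(N_3)$-orbit.

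It suffices therefore to verify the biconditional on a single model pair. Take $\{a_1, a_2, a_3\}$ to be the standard pants decomposition of $N_3$ by the three crosscap cores in the sphere-with-three-crosscaps presentation. Then $a_1, a_2 \in \curv^-$ are disjoint, and $\varphi(a_i)$ may be realized as a loop that passes once through each of the two crosscaps other than the $i$-th. I would verify $i(\varphi(a_1), \varphi(a_2)) = 1$ by an arc-counting argument: inside the third crosscap the two arcs of $\varphi(a_1), \varphi(a_2)$ can be drawn disjointly, while in the planar pair of pants $S^2 \setminus (D_1 \cup D_2 \cup D_3)$ the four arcs meet $\partial D_3$ in alternating cyclic pattern, forcing exactly one intersection there.

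Combining: for distinct disjoint $\alpha, \alpha' \in \curv^-$, some $f \in \mcg(N_3)$ carries $(\alpha, \alpha')$ to $(a_1, a_2)$, and equivariance gives $i(\varphi(\alpha), \varphi(\alpha')) = i(\varphi(a_1), \varphi(a_2)) = 1$; conversely, if $i(\varphi(\alpha), \varphi(\alpha')) = 1$ then orbit-uniqueness on the $\varphi$-image side together with equivariance carries $(\alpha, \alpha')$ to $(a_1, a_2)$, which are disjoint. The main obstacle is the explicit computation $i(\varphi(a_1), \varphi(a_2)) = 1$; nondisjointness of $\varphi(a_1), \varphi(a_2)$ is immediate (any two-sided curve disjoint from $\varphi(a_i)$ must equal $\varphi(a_i)$, since $N_3 \setminus \varphi(a_i) \cong N_{1,2}$ contains no nontrivial two-sided curve), but ruling out extra crossings requires the careful arc analysis in the pair-of-pants region.
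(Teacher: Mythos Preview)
Your argument is correct. The paper's own proof consists entirely of a reference to a figure displaying one configuration of $\alpha,\alpha',\varphi(\alpha),\varphi(\alpha')$; it is in effect the same strategy you use --- reduce to one model case --- but carried out visually and with the $\mcg(N_3)$-transitivity left implicit. You make everything explicit: the equivariance of $\varphi$, the fact that disjoint pairs in $\curv^-$ form a single orbit (via the complement computation $N_3\setminus(\alpha\cup\alpha')\cong N_{1,2}$), the analogous fact for pairs in $\curv^+$ meeting once (via $N(\beta\cup\beta')\cong S_{1,1}$), and the model computation on the crosscap cores. Your route is longer but self-contained and does not depend on reading off an intersection number from a picture. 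One small remark: the lemma as stated is literally false for $\alpha=\alpha'$, and you correctly restrict to distinct curves; this matches the only use of the lemma later in the paper.
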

\begin{proof}
See the configuration of pairs $\{\alpha,\varphi(\alpha)\}$ and $\{\alpha',\varphi(\alpha')\}$ in Figure \ref{N30:dual}.
    \begin{figure}[h]
\begin{center}
\includegraphics[width=0.47\customwidth]{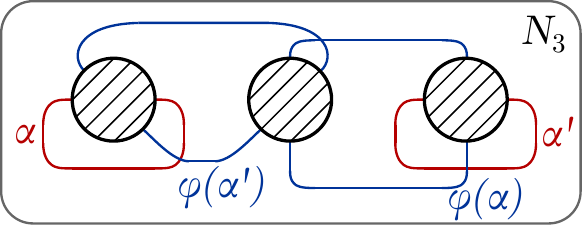} 
\caption{Curves $\alpha,\alpha',\varphi(\alpha)$ and $\varphi(\alpha)$ in $N_3$ -- Lemma \ref{N30:index}.}\label{N30:dual} %
\end{center}
\end{figure}
\end{proof}

\subsection{Vertices and edges of $\pants(N)$.} 
\begin{prop}
Every vertex of $\pants(N)$ belongs to one of the following sets.
\begin{align*}
    V_1&=\{\{\alpha_0,\beta\}\,\mid\,\beta\in\curv^+\},\\
    V_2&=\{\{\alpha,\varphi(\alpha)\}\,\mid\,\alpha\in\curv^-\}=\{\{\varphi^{-1}(\beta),\beta\}\,\mid\,\beta\in\curv^+\},\\
    V_3&=\{\{\alpha_1,\alpha_2,\alpha_3\}\,\mid\,\alpha_i\in\curv^-,\, i=1,2,3,\, i(\alpha_i,\alpha_j)=0\textrm{\ for\ }i\ne j\}.
\end{align*}  
\end{prop}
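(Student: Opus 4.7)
The plan is to start from the cardinality formula for pants decompositions of $N_{g,b}$ recalled in Section \ref{sec:preli} and specialize it to $N_3$. With $g=3$ and $b=0$, the admissible values of $m$ (the number of one-sided curves in a pants decomposition) are $m\in\{1,3\}$, which forces every pants decomposition of $N_3$ to have either cardinality $2$, with one one-sided and one two-sided curve (the case $m=1$), or cardinality $3$, with three one-sided curves (the case $m=3$). I would then split the argument according to these two cases.

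For a pants decomposition $P$ of cardinality $3$, all three curves of $P$ are one-sided. I would argue that $\alpha_0\notin P$: otherwise the two remaining curves of $P$ would form a pair of disjoint one-sided curves in $N_{\alpha_0}\cong S_{1,1}$, contradicting the orientability of $S_{1,1}$. Hence all three curves of $P$ lie in $\curv^-$, and since they are pairwise disjoint by definition of a multicurve, $P\in V_3$.

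For a pants decomposition $P=\{\alpha,\beta\}$ of cardinality $2$ with $\alpha$ one-sided and $\beta$ two-sided, I would distinguish the subcases $\alpha=\alpha_0$ and $\alpha\in\curv^-$. If $\alpha=\alpha_0$, then $\beta$ is a two-sided nontrivial curve different from $\alpha_0$, so $\beta\in\curv^+$ by the definition of $\curv^+$, and $P\in V_1$. If $\alpha\in\curv^-$, then Lemma \ref{N30:bijection} asserts that the unique two-sided curve in $N_3$ disjoint from $\alpha$ is $\varphi(\alpha)$; hence $\beta=\varphi(\alpha)$, and $P\in V_2$.

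The argument is essentially a bookkeeping exercise given the cardinality formula together with Lemma \ref{N30:bijection}, so no substantial obstacle is expected. The only subtlety is the exclusion of $\alpha_0$ from pants decompositions of cardinality $3$, which however follows at once from $N_{\alpha_0}\cong S_{1,1}$ being orientable and hence containing no one-sided curves.
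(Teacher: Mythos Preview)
Your proof is correct and follows essentially the same approach as the paper's. The paper organises the case split slightly differently---it branches on whether $X^+$ is empty and, when $\beta\in X^+$, cuts along $\beta$ to use $N_\beta\cong N_{1,2}$---whereas you start from the cardinality formula and branch on the identity of the one-sided curve; both routes invoke the same underlying facts (the structure of $N_{\alpha_0}$ and Lemma~\ref{N30:bijection}), and your version is more explicit about excluding $\alpha_0$ from the cardinality-$3$ case.
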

\begin{proof}
    Let $X\in\pants^0(N)$. If $X^+=\emptyset$ then $X\in V_3$. If $\beta\in X^+$ then since $N_\beta\cong N_{1,2}$, either $X=\{\beta,\alpha_0\}\in V_1$ or
    $X=\{\beta,\varphi^{-1}(\beta)\}\in V_2$.
\end{proof}
Let $\psi\colon V_1\to V_2$ be the bijection defined by
\[\psi(\{\alpha_0,\beta\})=\{\varphi^{-1}(\beta),\beta\}\]
for $\beta\in\curv^+$.  For $i=1,2$ we define maps $v_i\colon\curv^+\cup\curv^-\to V_i$ by the formulas
\[
v_1(\alpha)=\begin{cases}
\{\alpha_0,\alpha\}&\alpha\in\curv^+\\
\{\alpha_0,\varphi(\alpha)\}&\alpha\in\curv^-
\end{cases}\qquad
v_2(\alpha)=\begin{cases}
\{\varphi^{-1}(\alpha),\alpha\}&\alpha\in\curv^+\\
\{\alpha,\varphi(\alpha)\}&\alpha\in\curv^-.
\end{cases}
\]
Observe that $v_2=\psi\circ v_1$. For $i=1,2$ the restrictions  $v_i|_{\curv^+}$ and  $v_i|_{\curv^-}$ are bijections onto $V_i$, and for $\alpha\in\curv^-$, $\beta\in\curv^+$ we have
\[v_i(\alpha)=v_i(\beta)\iff \beta=\varphi(\alpha).\]

\begin{prop}
Every edge of $\pants(N)$ belongs to one of the following  sets.
\begin{align*}
    E_{11}&=\{v_1(\beta)\mvI v_1(\beta')\,\mid\,\beta,\beta'\in\curv^+,\ i(\beta,\beta')=1\},\\
    E_{12}&=\{X\mvIII\psi(X)\,\mid\,X\in V_1\},\\
     E_{23}&=\{X\mvIVL v_2(\alpha)\,\mid\, X\in V_3, \alpha\in X\},\\
    E_{33}&=\{\{\alpha_1,\alpha_2,\alpha_3\}\mvIII\{\alpha'_1,\alpha_2,\alpha_3\}\,\mid\,
    \{\alpha_1,\alpha_2,\alpha_3\},\{\alpha'_1,\alpha_2,\alpha_3\}\in V_3\}.
\end{align*}  
\end{prop}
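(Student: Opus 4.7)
The plan is to proceed by case analysis on the type of the elementary move $XY$. For each of the four types, I would determine which of $V_1, V_2, V_3$ can be endpoints by computing the topology of the support, using that $N_{\alpha_0}\cong S_{1,1}$, $N_\alpha\cong N_{1,2}$ for $\alpha\in\curv^+$, and $N_\alpha\cong N_{2,1}$ for $\alpha\in\curv^-$, together with Lemmas \ref{N30:bijection} and \ref{N30:index}. The reason this works cleanly is that every vertex of $\pants(N_3)$ has cardinality $2$ or $3$, so once we fix the curve $\alpha$ being replaced, the support is determined by a single cut.

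First I would rule out type 2: a type 2 move needs an $S_{0,4}$-support, but cutting along any single curve in $N_3$ yields only $S_{1,1}$, $N_{1,2}$, or $N_{2,1}$, none of which contains $S_{0,4}$. Second, a type 1 move needs an $S_{1,1}$-support, which forces $X\in V_1$ with the two-sided curve $\beta$ being replaced; then $X\setminus\{\beta\}=\{\alpha_0\}$ and the new curve $\beta'\in\curv^+$ satisfies $i(\beta,\beta')=1$ in $N_{\alpha_0}$, so $Y=\{\alpha_0,\beta'\}\in V_1$ and $XY\in E_{11}$.

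Third, a type 4 move needs an $N_{2,1}$-support, hence the cut is along a one-sided curve in $\curv^-$. The only vertex with a two-sided curve and a curve of $\curv^-$ is in $V_2$: for $X=\{\alpha,\varphi(\alpha)\}\in V_2$, removing $\varphi(\alpha)$ leaves $N_\alpha\cong N_{2,1}$, and the move replaces $\varphi(\alpha)$ by two disjoint one-sided curves, giving $Y\in V_3$ with $\alpha\in Y$. This is exactly the description of $E_{23}$. Finally, a type 3 move has $N_{1,2}$-support. If $X\in V_1$ we must remove $\alpha_0$, and the unique second one-sided curve in $N_\beta\cong N_{1,2}$ is $\varphi^{-1}(\beta)$ by Lemma \ref{N30:bijection}, giving $Y=\psi(X)\in V_2$; symmetrically for $X\in V_2$, yielding $E_{12}$. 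If $X=\{\alpha_1,\alpha_2,\alpha_3\}\in V_3$ and we remove $\alpha_i$, the support $N_{\{\alpha_j,\alpha_k\}}$ is an $N_{1,2}$ with replacement curve $\alpha_i'$; then $Y\in V_3$ and $XY\in E_{33}$.

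The only subtlety — and the step I expect to require the most care — is verifying in the last case that the replacement curve $\alpha_i'$ in a $V_3$-to-$V_3$ move is again in $\curv^-$ rather than equal to $\alpha_0$. This reduces to the observation that $i(\alpha_0,\gamma)=1$ for every $\gamma\in\curv^-$, so $\alpha_0$ cannot be disjoint from $\alpha_j$ and $\alpha_k$ and hence cannot belong to the $N_{1,2}$-support. The rest of the bookkeeping is matching type 3 edges between $V_1$ and $V_2$ to the bijection $\psi$ and recognizing that $E_{23}$'s enumeration $\{X\mvIVL v_2(\alpha):\alpha\in X\in V_3\}$ exhausts the reverse type 4 edges out of each $X\in V_3$.
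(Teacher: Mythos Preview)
Your proof is correct and amounts to the same case analysis as the paper's, just organized by move type rather than by the curve that stays fixed; the paper instead fixes the common curve $\beta\in X\cap Y$ and branches on whether $\beta=\alpha_0$, $\beta\in\curv^+$, or $\beta\in\curv^-$, which immediately identifies the support and hence the move type. Both routes are short and rely on the same facts (the three possible cut surfaces and Lemma~\ref{N30:bijection}), so there is no substantive difference.
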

\begin{proof}
Let $e=XY$ be an edge of $\pants(N)$. If $X,Y\in V_3$ then $e\in E_{33}$.
Suppose that $e$ is of the form \[\{\alpha,\beta\}\mv\{\alpha',\beta\},\]
where $\alpha,\beta$ are curves ($X\notin V_3$). If $\beta\in\curv^+$ then $\{\alpha,\alpha'\}=\{\alpha_0,\varphi^{-1}(\beta)\}$ and $e\in E_{12}$. If $\beta=\alpha_0$ then $e\in E_{11}$. Finally, suppose $\beta\in\curv^-$ and $\alpha=\varphi(\beta)$, so that $X=v_2(\beta)$. Then $\alpha'\notin\curv^+$, because $\alpha$ is the unique two-sided curve disjoint from $\beta$. Thus $\alpha'=\{\alpha_1,\alpha_2\}$ for $\alpha_1,\alpha_2\in\curv^-$, $Y\in V_3$ and $e\in E_{23}$.
\end{proof}
\begin{rem}
    Let $X=\{\alpha_1,\alpha_2,\alpha_3\}\in V_3$. Since $N_{\alpha_1}\cong N_{2,1}$, by the description of $\pants(N_{2,1})$ we have
    $i(\varphi(\alpha_1),\alpha_j)=1$ for $j=2,3$, which implies that $X$ is joined to $v_2(\alpha_1)=\{\alpha_1,\varphi(\alpha_1)\}\in V_2$ by a move of type 4. By symmetry, $X$ is also joined to $v_2(\alpha_2)$ and $v_2(\alpha_3)$. Since $X$ is also incident to three edges of type 3 (from $E_{33}$), $X$ is a vertex of degree $6$ -- see Figure \ref{fig:n3:star}. On the other hand, every vertex from $V_1$ is contained in a Farey subgraph, and every vertex from $V_2$ is the centre of a fan subgraph, so these vertices have infinite degree.
\end{rem}
\begin{rem}
Every edge from $E_{33}$ belongs to two different fan triangles -- see Figure \ref{PN30:full}. Namely, an edge of the form \[\{\alpha_1,\alpha_2,\alpha_3\}\mvIII\{\alpha'_1,\alpha_2,\alpha_3\}\]
belongs to triangles with the third vertex  $v_2(\alpha_j)=\{\alpha_j,\varphi(\alpha_j)\}$ for $j=2,3$. 
    \begin{figure}[h]
\begin{center}
\includegraphics[width=0.86\customwidth]{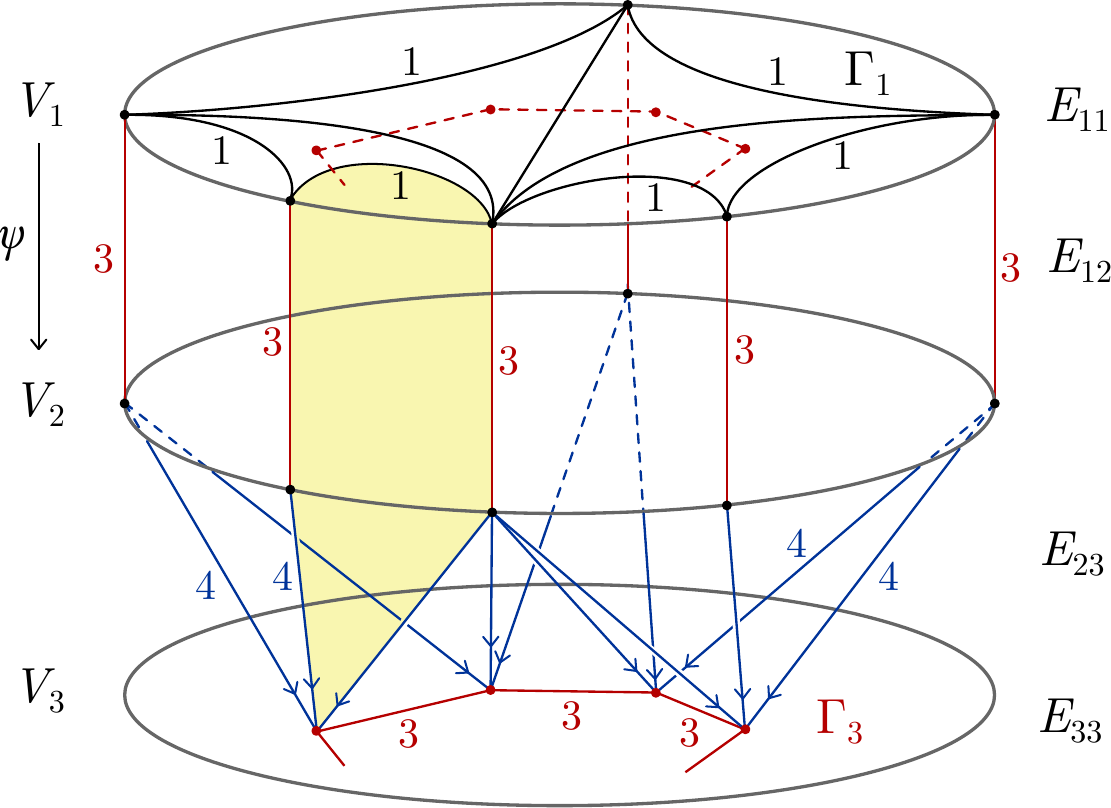} 
\caption{Vertices and edges of the pants graph $\pants(N_3)$. The shaded region is one of the standard pentagons -- see Definition \ref{def:std:pent}.}\label{PN30:full} %
\end{center}
\end{figure}
On the other hand, edges form $E_{12}$ are not contained in any triangles -- see Example \ref{ex:lonely:edge}.
\end{rem}
\begin{rem}
Using terminology from graph theory, the set $E_{12}$ is a \emph{matching}, that is to say no two edges in $E_{12}$ share a common vertex. It is a realisation of the bijection $\psi\colon V_1\to V_2$, as $X\in V_1$ is matched with $\psi(X)\in V_2$.
\end{rem}

\subsection{Subgraphs of $\pants(N)$.} Consider the following natural subgraphs of $\pants(N)$:
\[\Gamma_1=(V_1,E_{11}),\quad \Gamma_3=(V_3,E_{33}).\]
\begin{prop}
    $\Gamma_1$ is the unique Farey subgraph of $\pants(N)$.
\end{prop}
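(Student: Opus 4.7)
The plan is to establish two claims: (a) $\Gamma_1$ is a Farey subgraph, and (b) every Farey subgraph of $\pants(N)$ equals $\Gamma_1$. Throughout, I use the classification of Farey subgraphs given by Proposition \ref{lem:Fsubgraph} together with Remark \ref{rem:FAal:char}, which says that a subgraph of $\pants(N)$ is a Farey subgraph if and only if it equals $F(X,\alpha)$ for some $X\in\pants^0(N)$ and $\alpha\in X^+$ that is \emph{not} special in $X$.

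For (a), I pick any vertex $X=\{\alpha_0,\beta\}\in V_1$ and consider $F(X,\beta)$. Cutting $N$ along $X\setminus\{\beta\}=\{\alpha_0\}$ yields $N_{\alpha_0}\cong S_{1,1}$, so $F(X,\beta)\cong \pants(S_{1,1})$, which is the Farey graph. Moreover, the pants decompositions of $N$ containing $\alpha_0$ are precisely the elements of $V_1$, and the elementary moves between them all have support $S_{1,1}$, hence are of type $1$ and correspond exactly to the edges in $E_{11}$. Therefore $F(X,\beta)=\Gamma_1$, which shows that $\Gamma_1$ is a Farey subgraph.

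For (b), let $F$ be any Farey subgraph of $\pants(N)$. By Proposition \ref{lem:Fsubgraph}, $F=F(Y,\alpha)$ for some $Y\in\pants^0(N)$ and $\alpha\in Y^+$ which is not special in $Y$. Since $Y$ contains a two-sided curve, $Y\notin V_3$, so $Y\in V_1\cup V_2$. If $Y\in V_2$, then $Y=\{\gamma,\varphi(\gamma)\}$ with $\gamma\in\curv^-$; the unique two-sided curve in $Y$ is $\alpha=\varphi(\gamma)$, and the nontrivial component of $N_{Y\setminus\{\alpha\}}=N_\gamma$ is homeomorphic to $N_{2,1}$. This means $\alpha$ is special in $Y$, contradicting the choice of $Y$. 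Therefore $Y\in V_1$, and by the computation in (a) we conclude $F=F(Y,\alpha)=\Gamma_1$.

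The argument is mostly bookkeeping; the only real content is the case analysis in (b), where the main point is using the special/non-special dichotomy to eliminate the $V_2$ case. I do not expect any serious obstacle, since all the heavy lifting has already been done in Proposition \ref{lem:Fsubgraph} and Remark \ref{rem:FAal:char}.
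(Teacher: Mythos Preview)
Your proof is correct. Both parts are sound: in (a) you correctly identify $\Gamma_1$ with $F(X,\beta)\cong\pants(S_{1,1})$, and in (b) the case analysis on $V_1,V_2,V_3$ together with Proposition~\ref{lem:Fsubgraph} cleanly forces $Y\in V_1$.

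The paper takes a somewhat different route for uniqueness. Rather than invoking Proposition~\ref{lem:Fsubgraph} and then eliminating $V_2$ and $V_3$ by the special/non-special dichotomy, it observes directly that $\pants(N_3)$ has no edges of type~2 at all (no curve in any pants decomposition has an $S_{0,4}$ complement) and that every edge of type~1 lies in $E_{11}\subset\Gamma_1$; hence every Farey triangle already sits inside $\Gamma_1$, and uniqueness follows. Your argument leans on the vertex classification and the marked $F$-graph machinery, while the paper's leans on the edge classification. The paper's version is a touch shorter and avoids the case split, but yours makes more systematic use of the structural results already proved, which is arguably cleaner pedagogically.
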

\begin{proof}
    We have \[\Gamma_1\cong\pants(N_{\alpha_0})\cong\pants(S_{1,1}),\]
    so $\Gamma_1$ is a Farey subgraph. Since $\Gamma_1$ contains all edges of type 1, and there are no edges of type 2 in $\pants(N)$, $\Gamma_1$ contains all Farey triangles of $\pants(N)$.
\end{proof}

\begin{defi}(Farey tree)
Farey tree $\Gamma_1^\ast$ is the graph which has a vertex for each triangle of $\Gamma_1$, and two vertices of $\Gamma_1^\ast$ are  joined by an edge if and only if the corresponding triangles share an edge.     
\end{defi}

As the name suggests, $\Gamma_1^\ast$ is a regular tree of degree 3. Let $\Tri(\Gamma_1)$ denote the set of triangles of $\Gamma_1$. In this section we identify a triangle with its set of vertices.  By Lemma \ref{N30:index}, for $\alpha_i\in\curv^-$, $i=1,2,3$
\[\{\alpha_1,\alpha_2,\alpha_2\}\in V_3\iff\{v_1(\alpha_1),v_1(\alpha_2),v_1(\alpha_3)\}\in\Tri(\Gamma_1).\]
Let us define $\theta\colon V_3\to\Tri(\Gamma_1)$ by 
 \[ \theta(\{\alpha_1,\alpha_2,\alpha_3\})=\{v_1(\alpha_1),v_1(\alpha_2),v_1(\alpha_3)\}.\]
 Since $v_1|_{\curv^-}$ is a bijection, $\theta$ is also a bijection. Furthermore, 
 for $X,Y\in V_3$, $XY\in E_{33}$ if and only if the triangles $\theta(X)$ and $\theta(Y)$ share an edge. Thus $\theta$ is  a graph isomorphism $\theta\colon\Gamma_3\to\Gamma_1^\ast$ and we have the following result.
\begin{prop} 
$\Gamma_3$ is isomorphic to the Farey tree $\Gamma_1^\ast$.
\end{prop}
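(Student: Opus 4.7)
The plan is to verify that the map $\theta\colon V_3\to\Tri(\Gamma_1)$ introduced just before the statement is a graph isomorphism onto $\Gamma_1^\ast$, by checking in order: well-definedness, bijectivity, and preservation of adjacency in both directions. Most of the ingredients are already established in the excerpt, so the proof is essentially a matter of assembling them.

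First I would confirm that $\theta$ takes values in $\Tri(\Gamma_1)$. Given $X=\{\alpha_1,\alpha_2,\alpha_3\}\in V_3$ the $\alpha_i$ are pairwise disjoint one-sided curves, so by Lemma~\ref{N30:index} their $\varphi$-images satisfy $i(\varphi(\alpha_i),\varphi(\alpha_j))=1$ for $i\ne j$. Since $v_1(\alpha_i)=\{\alpha_0,\varphi(\alpha_i)\}$, the three vertices $v_1(\alpha_i)$ are pairwise joined by edges of $E_{11}$, hence span a Farey triangle; so $\theta(X)\in\Tri(\Gamma_1)$.

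Next, bijectivity of $\theta$ follows because $v_1|_{\curv^-}\colon\curv^-\to V_1$ is a bijection, and the construction reverses: a triangle $T=\{v_1(\alpha_1),v_1(\alpha_2),v_1(\alpha_3)\}\in\Tri(\Gamma_1)$ forces $i(\varphi(\alpha_i),\varphi(\alpha_j))=1$ for $i\ne j$, and by Lemma~\ref{N30:index} again this is equivalent to $i(\alpha_i,\alpha_j)=0$, so the preimage $\{\alpha_1,\alpha_2,\alpha_3\}$ lies in $V_3$. Thus $\theta$ is a bijection of vertex sets.

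Finally I would verify edge preservation. Two vertices of $\Gamma_3$ are joined by an edge of $E_{33}$ precisely when they differ in exactly one one-sided curve, say $X=\{\alpha_1,\alpha_2,\alpha_3\}$ and $Y=\{\alpha_1',\alpha_2,\alpha_3\}$. Then $\theta(X)$ and $\theta(Y)$ share the common edge $\{v_1(\alpha_2),v_1(\alpha_3)\}$ in $\Gamma_1$, which is exactly the adjacency condition defining $\Gamma_1^\ast$. Conversely, two triangles of $\Gamma_1$ sharing an edge differ in a single vertex, which via $\theta^{-1}$ translates back to two elements of $V_3$ differing in a single one-sided curve, i.e.\ joined by an edge of $E_{33}$. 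Hence $\theta\colon\Gamma_3\to\Gamma_1^\ast$ is a graph isomorphism, and since $\Gamma_1$ is a Farey graph, $\Gamma_1^\ast$ is the regular trivalent tree. The only real content is bookkeeping with the bijection $\varphi$ and Lemma~\ref{N30:index}; there is no substantial obstacle once those are in hand.
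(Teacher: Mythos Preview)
Your proof is correct and follows exactly the same route as the paper: the paper simply asserts that $\theta$ is a bijection because $v_1|_{\curv^-}$ is, and that $XY\in E_{33}$ if and only if $\theta(X)$ and $\theta(Y)$ share an edge, while you spell out the verifications (well-definedness via Lemma~\ref{N30:index}, the inverse, and both directions of edge preservation). There is no difference in strategy, only in level of detail.
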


\begin{lemma}
    For $X\in V_2$ and $Y\in V_3$,
    \[XY\in E_{23}\iff \psi^{-1}(X)\in\theta(Y).\]
\end{lemma}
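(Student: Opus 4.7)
The plan is to unpack both sides of the equivalence using the definitions of $\psi$, $\theta$, $v_1$, $v_2$, and $E_{23}$, and show that each reduces to the same condition, namely that the unique $\alpha \in \curv^-$ with $X = v_2(\alpha)$ belongs to $Y$.

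First I would write $X \in V_2$ as $X = v_2(\alpha)$ for the unique $\alpha \in \curv^-$ determined by $X$ (uniqueness comes from the fact that $v_2|_{\curv^-}$ is a bijection onto $V_2$, as noted just before the statement). Then, using $v_2 = \psi \circ v_1$, I get $\psi^{-1}(X) = v_1(\alpha)$. Writing $Y = \{\alpha_1,\alpha_2,\alpha_3\} \in V_3$, we have by definition $\theta(Y) = \{v_1(\alpha_1), v_1(\alpha_2), v_1(\alpha_3)\}$, so since $v_1|_{\curv^-}$ is a bijection,
\[\psi^{-1}(X) \in \theta(Y) \iff v_1(\alpha) = v_1(\alpha_i)\text{ for some }i \iff \alpha \in Y.\]

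On the other hand, by the definition of $E_{23}$, $XY \in E_{23}$ means $X = v_2(\alpha')$ for some $\alpha' \in Y$. Since $X$ already equals $v_2(\alpha)$ and $v_2|_{\curv^-}$ is injective with image $V_2$, this forces $\alpha' = \alpha \in Y$. Conversely, if $\alpha \in Y$, then $X = v_2(\alpha)$ with $\alpha \in Y$ gives $XY \in E_{23}$ directly from the definition.

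Combining these two equivalences yields the claim. There is no real obstacle here; the entire content of the lemma is bookkeeping through the bijections $\varphi$, $v_1$, $v_2$, $\psi$, and $\theta$ that were set up in the preceding subsections, and the key observation is simply that $\psi^{-1} \circ v_2 = v_1$ when both are restricted to $\curv^-$, so that the map $v_2(\alpha) \mapsto v_1(\alpha)$ translates ``$X$ sees $\alpha$'' on the $V_2$--side into the corresponding vertex of $\Gamma_1$ on the triangle--side.
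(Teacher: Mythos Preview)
Your proof is correct and follows essentially the same approach as the paper's: both reduce each side of the equivalence to the condition ``$\alpha\in Y$'' (equivalently, $X\in v_2(Y)$) via the identities $v_2=\psi\circ v_1$ and $\theta(Y)=v_1(Y)$. The only difference is cosmetic: the paper phrases it at the level of sets, writing $v_2(Y)=\psi(v_1(Y))=\psi(\theta(Y))$ and then applying $\psi^{-1}$, whereas you fix the element $\alpha$ with $X=v_2(\alpha)$ and track it through the bijections.
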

\begin{proof}
 $XY\in E_{23}$ if and only if $X=v_2(\alpha)$ for some $\alpha\in Y$, in other words $X\in v_2(Y)$. We have
$v_2(Y)=\psi(v_1(Y))=\psi(\theta(Y))$,
so $X\in v_2(Y)\iff \psi^{-1}(X)\in\theta(Y).$
\end{proof}

\begin{prop}
    Every $X\in V_2$ is the centre of a unique fan subgraph of $\pants(N)$ with vertex set
    \[\{X\}\cup\{Y\in V_3\mid \psi^{-1}(X)\in\theta(Y)\}.\]
\end{prop}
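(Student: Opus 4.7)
The plan is to write $X=\{\alpha,\varphi(\alpha)\}$ for some $\alpha\in\curv^-$ and identify the desired fan subgraph as $F(X,\varphi(\alpha))$, then read off its vertex set using the description of $V_3$ via $\theta$.

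First I would verify existence. Since $\alpha\in\curv^-$, the unique non-trivial component of $N_{X\setminus\{\varphi(\alpha)\}} = N_\alpha$ is homeomorphic to $N_{2,1}$, so $\varphi(\alpha)$ is special in $X$ by definition. By Remark \ref{rem:FAal:char}(2) and Remark \ref{rem:special:in:fan}, $F(X,\varphi(\alpha))\cong F_\infty$ with centre $X$, and by Lemma \ref{edge4:inFarey} this is a fan subgraph of $\pants(N)$.

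Next I would verify uniqueness. By Proposition \ref{lem:Fsubgraph}, any fan subgraph centred at $X$ must be of the form $F(X,\alpha')$ for a unique $\alpha'\in X$, and moreover $\alpha'$ must be two-sided and special. Among the two elements of $X=\{\alpha,\varphi(\alpha)\}$, the curve $\alpha$ is one-sided, so $\alpha'=\varphi(\alpha)$ is forced. This gives uniqueness.

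Finally I would identify the vertex set. By Definition of $F(X,\varphi(\alpha))$, its vertices are exactly the pants decompositions of $N$ containing $X\setminus\{\varphi(\alpha)\}=\{\alpha\}$. Partitioning according to $V_1,V_2,V_3$: no vertex in $V_1$ contains any curve of $\curv^-$; the unique vertex in $V_2$ containing $\alpha$ is $X$ itself (since $\varphi$ is a bijection, $\alpha$ is the $\curv^-$-entry only of $\{\alpha,\varphi(\alpha)\}$); and a vertex $Y=\{\alpha_1,\alpha_2,\alpha_3\}\in V_3$ contains $\alpha$ iff $\alpha\in Y$. It remains to translate this last condition through $\theta$ and $\psi^{-1}$. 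Using the definitions, I would compute
\[\psi^{-1}(X)=\psi^{-1}(\{\varphi^{-1}(\varphi(\alpha)),\varphi(\alpha)\})=\{\alpha_0,\varphi(\alpha)\}=v_1(\alpha),\]
and for $Y=\{\alpha_1,\alpha_2,\alpha_3\}\in V_3$, $\theta(Y)=\{v_1(\alpha_1),v_1(\alpha_2),v_1(\alpha_3)\}$. Since $v_1|_{\curv^-}$ is a bijection (as noted after the definition of $v_i$), we get $v_1(\alpha)\in\theta(Y)\iff \alpha\in Y$. This matches the two descriptions of the vertex set and completes the proof.

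The argument is essentially bookkeeping; the only step with any subtlety is the uniqueness, which reduces instantly to the observation that $\alpha\in X$ is one-sided and therefore cannot be special. No genuine obstacle is expected.
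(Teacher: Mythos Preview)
Your proof is correct and follows essentially the same line as the paper: write $X=\{\varphi^{-1}(\beta),\beta\}$ (your $\beta=\varphi(\alpha)$), observe that $\beta$ is the unique special curve in $X$ so $F(X,\beta)$ is the unique fan subgraph centred at $X$, and then identify its vertex set. The only difference is cosmetic: the paper invokes the immediately preceding lemma (that $XY\in E_{23}\iff\psi^{-1}(X)\in\theta(Y)$) to describe the non-centre vertices, whereas you unpack the same computation directly from the definitions of $\psi$, $\theta$, and $v_1$. Both routes amount to the same bookkeeping.
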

\begin{proof}
    If $X=\{\varphi^{-1}(\beta),\beta\}\in V_2$, where $\beta\in\curv^+$, then $F=F(X,\beta)$ is the unique fan subgraph of $\pants(N)$ containing $X$. If $Y\ne X$, then $Y$ is a vertex of $F$ if and only if $XY\in E_4$, which is equivalent to 
     $\psi^{-1}(X)\in\theta(Y)$ by the previous lemma.
\end{proof}

\subsection{Abstract model of $\pants(N)$.} Using the results of the two previous subsections we can construct an abstract graph isomorphic to $\pants(N)$ starting from the Farey graph $\farey$ and the associated Farey tree $\farey^\ast$. Let $V(\farey)$ (resp. $V(\farey^\ast)$) denote the vertex set of $\farey$ (resp. $\farey^\ast$). Recall that $V(\farey^\ast)$ is the set of triangles of $\farey$.

We define $\mathcal{G}$ to be the graph obtained from the disjoint union $\farey\sqcup\farey^\ast$ by adding a new vertex $\widetilde{x}$ for every $x\in V(\farey)$ together with edges $\widetilde{x}x$ and $\widetilde{x}T$ for every $T\in V(\farey^\ast)$ such that $x\in T$. In follows from the results of the two previous subsections that $\mathcal{G}$ is isomorphic to $\pants(N)$, with $\farey$ and $\farey^\ast$ corresponding to $\Gamma_1$ and $\Gamma_3$ respectively, and for $x\in V(\farey)$ corresponding to $X\in V_1$, $\widetilde{x}$ corresponds to $\psi(X)\in V_2$.

\subsection{Attaching 2-cells to $\pants(N)$.} 
The analysis from the previous section indicates that there are natural cycles of length 5 (pentagons) in $\pants(N)$.
\begin{defi}\label{def:std:pent}
    For $X\in V_3$ and $\alpha_1,\alpha_2\in X$ we define a \emph{standard pentagon} to be the circuit
    \[X\mvIVL v_2(\alpha_1)\mvIII v_1(\alpha_1)\mvI v_1(\alpha_2)\mvIII v_2(\alpha_2)\mvIV X.\]
  \end{defi}
  \begin{example}
 See Figure \ref{fig:N30:pent} for the geometric realization of a standard pentagon in $N$.
    \begin{figure}[h]
\begin{center}
\includegraphics[width=0.62\customwidth]{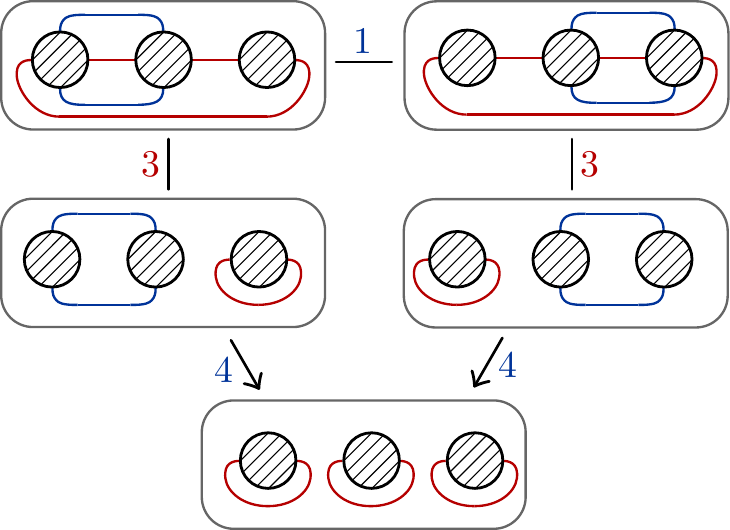} 
\caption{Standard pentagon in $N_3$.}\label{fig:N30:pent} %
\end{center}
\end{figure}
\end{example}
\begin{theorem}\label{prop:n30:2dim}
 The 2-dimensional complex  $\overline{\pants(N)}$ obtained  by attaching 2-cells to $\pants(N)$  along each triangle and each standard pentagon is simply-connected.
\end{theorem}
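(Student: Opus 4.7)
The strategy is to build up $\overline{\pants(N)}$ in four stages and verify simple connectivity at each stage. Define $X_0 := \overline{\Gamma_1}$ (the Farey subgraph $\Gamma_1$ with all Farey $2$-cells attached), $X_1 := X_0 \cup V_2 \cup E_{12}$, $X_2 := X_1 \cup V_3 \cup E_{23} \cup \Pi$ (where $\Pi$ denotes all standard pentagons), and $X_3 := X_2 \cup E_{33} \cup \Phi$ (where $\Phi$ denotes all fan triangles). By Proposition \ref{lem:triangles} and the classification of edges, $X_3 = \overline{\pants(N)}$.

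For Stage 1, $\Gamma_1 \cong \pants(S_{1,1})$ is the Farey graph, so $X_0$ is homeomorphic to the Farey tessellation of $\mathbb{H}^2$ and is contractible. For Stage 2, each $\widetilde{x} \in V_2$ is attached to $X_0$ by the single thin edge $\widetilde{x}x$, so $X_1$ deformation retracts to $X_0$ and is simply connected.

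For Stage 3, the plan is to attach each $T \in V_3$ together with its three $E_{23}$-edges $T\widetilde{x_i}$ (where $\theta(T) = \{x_1,x_2,x_3\}$) and its three associated standard pentagons, one $T$ at a time; this is legitimate because the new cells contributed by distinct $T$'s are disjoint, sharing only vertices and edges already in $X_1$. Attaching $T$ together with its three edges to the simply connected complex $X_1$ yields a $\pi_1$ that is free of rank $2$, generated by loops $\ell_{ij} := T\widetilde{x_i}\cdot\gamma_{ij}\cdot\widetilde{x_j}T$ for $1 \le i < j \le 3$ (subject to $\ell_{12}\ell_{23} = \ell_{13}$), where $\gamma_{ij}$ is any path in $X_1$ from $\widetilde{x_i}$ to $\widetilde{x_j}$ (well defined up to homotopy since $\pi_1(X_1) = 0$). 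The standard pentagon for the pair $\{x_i,x_j\}$ has boundary $T\widetilde{x_i}x_i x_j\widetilde{x_j}T$, and hence kills $\ell_{ij}$ via the canonical choice $\gamma_{ij} = \widetilde{x_i}x_i x_j \widetilde{x_j}$; attaching all three pentagons therefore makes $\pi_1$ trivial.

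For Stage 4, each $E_{33}$-edge $TT'$ joins existing vertices of $X_2$ and introduces one generator to $\pi_1$, while each of the two fan triangles $TT'\widetilde{x}$ (one per common vertex $x \in T \cap T'$) imposes the relation $TT' = T\widetilde{x}\cdot\widetilde{x}T'$, killing the generator (the second fan triangle is then redundant). The main obstacle is Stage 3, where one must verify that the three standard pentagons at each $V_3$-vertex $T$ exactly cover the free loops arising from its three-pronged attachment to $X_1$; the other stages are essentially bookkeeping once the abstract graph model of $\pants(N)$ is used.
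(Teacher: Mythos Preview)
Your proof is correct and takes a genuinely different route from the paper's. The paper argues directly on loops: it first eliminates $E_{33}$-edges from any simplicial loop using fan triangles, and then inducts on length, using that any loop leaving $\Gamma_1$ must traverse a segment of the form $V_1\ni X_0\mvIII X_1\mvIV X_2\mvIVL X_3$, which can be shortened by one via a standard pentagon. Your filtration instead builds $\overline{\pants(N)}$ outward from the contractible Farey disk and tracks $\pi_1$ by generators-and-relations bookkeeping (van Kampen) at each stage.

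What your approach buys is a structural picture: it makes visible that the three pentagons at each $T\in V_3$ assemble into a disk capping the Farey triangle $\theta(T)$, so that each tripod attachment is exactly cancelled; and that every $E_{33}$-edge is homotopically redundant once a single fan triangle is present (the second being superfluous, as you note). What the paper's approach buys is elementarity---no appeal to van Kampen or to $\pi_1$ commuting with directed unions, just explicit null-homotopies. Both arguments rest on the same combinatorial content: the standard pentagons identify the $E_{23}$-spokes with paths in the Farey layer, and the fan triangles identify each $E_{33}$-edge with a pair of $E_{23}$-spokes.
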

\begin{proof}
Let $L$ be a loop in $\overline{\pants(N)}$ based at $X_0\in V_1$. We can assume that $L$ is a simplicial loop in the graph $\pants(N)$. 
Since every edge from $E_{33}$ is contained in a fan triangle, every loop is homotopic in $\overline{\pants(N)}$ to a loop without edges from $E_{32}$, so we assume that $L$ does not contain edges from $E_{33}$.
Let $n$ be the length of $L$, i.e. the number of edges contained in $L$. We prove by induction on $n$ that $L$ is null-homotopic in $\overline{\pants(N)}$. If $n=0$ then $L$ is obviously null-homotopic, so assume $n\ge 1$ and that all loops of length less than $n$ are null-homotopic.

If $L$ is contained in the Farey subgraph $\Gamma_1$, then $L$ is null-homotopic, because the complex obtained by attaching 2-cells to $\Gamma_1$  along each Farey triangle  is simply-connected.

If $L$ is not contained in $\Gamma_1$, then it contains at least one vertex $X_1\in V_2$ and we can assume that $X_1$ is joined to $X_0$ by an edge from $E_{12}$ and this edge  is contained in $L$. If $n=2$ then $L$ travels from $X_0$ to $X_1$  and then back to $X_0$ along the same edge, so it is null-homotopic. If $n\ge 3$ then $L$ must contain $X_2\in V_3$, $X_3\in V_2$, and edges $X_1X_2\in E_{23}$, $X_2X_3\in E_{23}$ (because we assumed that $L$ contains no edges from $E_{33}$). Consider the following segment $L'$ of $L$:
\[X_0\mvIII X_1\mvIV X_2\mvIVL X_3.\]
If $X_3=X_1$ then by replacing $L'$ by the single edge $X_0X_1$ we obtain a loop of length $n-2$ homotopic to $L$, which is null-homotopic by induction hypothesis. If $X_3\ne X_1$ then $L'$ is contained in a standard pentagon
\[X_0\mvIII X_1\mvIV X_2\mvIVL X_3\mvIII Y\mvI X_0,\]
where $Y=\psi^{-1}(X_3)$. By replacing $L'$ by the segment
\[X_0\mvI Y\mvIII X_3,\]
we obtain a loop of length $n-1$ homotopic to $L$, which is null-homotopic by induction hypothesis.
\end{proof}

\medskip
\noindent{\bf Acknowledgments.} We thank our colleagues Marta Le\'sniak and Jakub Szmelter-Tomczuk for fruitful discussions. Special thanks go to  Jakub for finding Example \ref{ex:5gon}.


\end{document}